\newcommand{\bd}{\mathrm{bd}\,} 
\newcommand{\mydot}{\,\cdot\,}
\newcommand{\linenopax}{\par\vspace{-2ex}}  
\renewcommand{\ii}{\mathbbm{i}}  
\newcommand{\pole}{\ensuremath{\gw}\xspace}   
\newcommand{\simm}{\gY}   
\newcommand{\simtset}{\{\simt_1,\ldots,\simt_N\}}
\newcommand{\id}{\ensuremath{\mathbb{I}}\xspace}     
\newcommand{\scale}{\ensuremath{\ell}\xspace}     
\newcommand{\inradius}{\ensuremath{\gr}\xspace}     
\newcommand{\abscissa}{\ensuremath{D}\xspace}  
\newcommand{\integral}{{\scalebox{0.6}{\ensuremath{\int{}}}}}     
\newcommand{\Words}{\sW}     
\renewcommand{\languidOrder}{\ensuremath{\gg}\xspace} 
\newcommand{\ghull}{\ensuremath{K}\xspace}
\newcommand{\he}{\rm head}
\newcommand{\ta}{\rm tail}
\newcommand{\gengen}{\ensuremath{U}\xspace} 
\newcommand{\genstr}{\ensuremath{\hat \ell}\xspace} 
\renewcommand{\gzT}[1][{}]{\ensuremath{\scalebox{1.1}{\gz}_{{\scalebox{0.7}{$\negsp[3]$\tiling}}#1}}\xspace} 
\renewcommand{\gzL}{\ensuremath{\scalebox{1.1}{\gz}_{\scalebox{0.7}{$\negsp[5]$\sL}}}\xspace} 
  \renewcommand{\head}{\negsp[4], \scalebox{0.6}{\rm{head}}}
  \renewcommand{\tail}{\negsp[4], \scalebox{0.6}{\rm{tail}}}
\newcommand{\clean}{false}  
\newcommand{\version}[2]{\ifthenelse{\equal{\clean}{true}}{#1}{{\footnotesize #2}}}
  \renewcommand\marginpar[1]{}
\numberwithin{equation}{section}
\numberwithin{theorem}{section}
\numberwithin{figure}{section}
\begin{document}
  \linenumbers

  \title[Pointwise tube formulas for fractal sprays and self-similar tilings]
    {Pointwise tube formulas for fractal sprays and \\ self-similar tilings with arbitrary generators}

  \author{Michel L. Lapidus}
  \address{University of California, Department of Mathematics, Riverside, CA 92521-0135 USA}
  \email{\href{mailto:lapidus@math.ucr.edu}{lapidus@math.ucr.edu}}

  \author{Erin P. J. Pearse}
  \address{University of Oklahoma, Department of Mathematics, Norman, OK 73019-0315 USA}
  \email{\href{mailto:ep@ou.edu}{ep@ou.edu}}

  \author{Steffen Winter}
  \address{Karlsruhe Institute of Technology, Department of Mathematics, 76128 Karlsruhe, Germany}
  \email{\href{mailto:steffen.winter@kit.edu}{steffen.winter@kit.edu}}

  \begin{abstract}
    In a previous paper by the first two authors, a tube formula for fractal sprays was obtained which also applies to a certain class of self-similar fractals. The proof of this formula uses distributional techniques and requires fairly strong conditions on the geometry of the tiling (specifically, the inner tube formula for each generator of the fractal spray is required to be polynomial). Now we extend and strengthen the tube formula by removing the conditions on the geometry of the generators, and also by giving a proof which holds pointwise, rather than distributionally.
    Hence, our results for fractal sprays extend to higher dimensions the pointwise tube formula for (1-dimensional) fractal strings obtained earlier by Lapidus and van Frankenhuijsen.  
    
    Our pointwise tube formulas are expressed as a sum of the residues of the ``tubular zeta function'' of the fractal spray in $\mathbb{R}^d$. This sum ranges over the complex dimensions of the spray, that is, over the poles of the geometric zeta function of the underlying fractal string and the integers $0,1,\dots,d$. The resulting ``fractal tube formulas'' are applied to the important special case of self-similar tilings, but are also illustrated in other geometrically natural situations. Our tube formulas may also be seen as fractal analogues of the classical Steiner formula. 
  \end{abstract}

  \date{\textbf{\today}}
  \keywords{Complex dimensions, tube formula, Steiner formula, scaling and integer dimensions, zeta functions, scaling and tubular zeta functions, inradius, self-similar tiling, curvatures, generating function, fractal tube formula, fractal string, fractal spray, lattice and nonlattice, Minkowski measurability and content, fractal curvatures.}

  \subjclass[2010]{
    Primary: 11M41, 28A12, 28A75, 28A80, 52A39, 52C07,
    Secondary: 11M36, 28A78, 28D20, 42A16, 42A75, 52A20, 52A38
    }
    
  \thanks{The work of MLL was partially supported by the US National Science Foundation under the research grant DMS-0707524. The work of EPJP was partially supported by the University of Iowa Department of Mathematics NSF VIGRE grant DMS-0602242. The work of SW was supported by a DAAD Postdoctoral grant and a travel grant from Cornell University.}

\maketitle

\setcounter{tocdepth}{1} {\small \tableofcontents}

\allowdisplaybreaks


    
\section{Introduction}
\label{sec:intro}

Our main results are tube formulas for fractal sprays in $\bR^d$, the higher-dimensional analogues of (geometric) fractal strings in \bR. In \cite{FGCD}, a fractal string is defined to be a bounded
open subset of the real line \bR; see also \cite{{Lap:FD},{Lap:UAB},{Lap:Dundee},LaMa,LaPo1,LaPo2,La-vF:CDandDiophantine,La-vF:Fractality,KTF}. Here, we emphasize the
interpretation of a fractal string as a sequence of positive
numbers, rather than as a collection of open intervals in the
geometric sense. 

\begin{defn}[Fractal string]\label{def:fractal_string}
  A \emph{fractal string} $\sL = \{\scale_j\}_{j=1}^\iy$ is a
  nonincreasing sequence of positive real numbers $\scale_j > 0$
  satisfying $\lim_{j \to \iy} \scale_j = 0$. 
\end{defn}
  
In particular, we do not assume that the sum
of the lengths of a fractal string is finite. Hence, \sL might not have a geometric realization as a bounded open subset of \bR, in the sense of \cite[Def. 1.2]{FGCD}.

\begin{defn}[Fractal spray]\label{def:fractal-spray}\label{def:generator}
  A \emph{fractal spray} \tiling defined on a bounded open set
  $\gengen \ci \bRd$ via the fractal string $\sL = \{\scale_j\}_{j=1}^\iy$ is a
  collection of disjoint bounded open sets \smash{$\{\gengen^j\}_{j=1}^\iy$} in
  \bRd such that, for each set $\gengen^j$ with $j \geq 1$, there exists a similarity
  transformation $\simm_j$ of \bRd with scaling ratio $\scale_j$ and satisfying
  $\gengen^j = \simm_j(\gengen)$. The spray \tiling is said to be
  \emph{scaled} by the fractal string \sL, and the connected components of the set \gengen are called the \emph{generators} of the fractal spray. The generators are denoted by $\gen_q$, where $q$ ranges over some finite or countable index set.
  When there is only one generator, we denote it by \gen instead of $\gen_1$.
\end{defn}

Hence, a fractal spray on the generator \gen is just a collection of disjoint
scaled copies of \gen such that the scaling ratios form a
fractal string (in the sense of Definition~\ref{def:fractal_string}), just as in \cite{LaPo2} and \cite[Section~1.4]{FGCD}.
Note that since \gengen is bounded and open, each generator 
is a bounded open and connected subset of \bRd, and hence there can be at most  countably many generators. We always assume in the sequel that \tiling has \emph{finitely many} generators $\{\gen_q\}_{q=1}^Q$, which allows us to study only the case of a \emph{single} generator \gen (see the explanation at the start of Section~\ref{sec:pw-tube-formula}, and the discussion just following \eqref{eqn:compatibility-condition}).
 
Note that fractal strings in the geometric sense may be viewed as fractal sprays in \bR generated by a bounded open interval $\gen$; indeed, they are disjoint unions of a sequence of bounded open intervals. Therefore, geometric fractal strings are included in the setting of fractal sprays. 
An important subclass of fractal sprays is formed by self-similar tilings, which appear naturally in connection with self-similar sets and are higher-dimensional generalizations of the (geometric) self-similar strings studied in \cite{La-vF:CDandDiophantine,La-vF:Fractality,FGCD}; see Section~\ref{sec:Self-similar-tilings}.

In the classical literature, the \emph{\ge-parallel set} (or \emph{\ge-neighborhood}) of a bounded set $A \ci \bRd$ is the set of points within (Euclidean) distance \ge of $A$ (see \eqref{eqn:ext-nbd-of-A}), and a \emph{tube formula} for $A$ is an explicit expression for the volume of the \ge-parallel set of $A$, viewed as a function of \ge; see Section~\ref{sec:motivation}. In this paper, we make use of the following ``inner'' analogues of these notions: 

For $\ge>0$, the
  \emph{inner \ge-parallel set} (or \emph{inner \ge-neighborhood}) %
  of a bounded open set $A \ci \bRd$ is the set
  \linenopax
  \begin{equation}\label{eqn:def:inner-parallel-set}
    A_{-\ge}:= \{ x \in A \suth \dist(x,A^c) \leq \ge\},
  \end{equation}
and an \emph{(inner) tube formula} for $A$ is an expression giving
 the volume  $V(A,\ge) := \gl_d(A_{-\ge})$ (i.e., the $d$-dimensional Lebesgue measure) of the set $A_{-\ge}$ as a function of $\ge\in(0,\infty)$. 
  
 Similarly, by a tube formula for a fractal spray $\tiling=\{\gen^j\}_{j=1}^\iy$, we will simply understand an expression $V(\tiling,\ge)$ for the volume of the inner $\ge$-parallel set $T_{-\ge}$ of the union set $T := \bigcup_{j=1}^\iy \gen^j$ of the components $\gen^j$ as a function of $\ge$; that is,  
  \linenopax
  \begin{equation}\label{eqn:def:spray-tube-formula}
    V(\tiling,\ge)
    := \gl_d(T_{-\ge})
    = \sum_{j=1}^\iy \gl_d((\gen^j)_{-\ge})
    = \sum_{j=1}^\iy V(\gen^j,\ge).
  \end{equation}

Our main results in this paper are tube formulas for fractal sprays in $\bR^d$, which are given in Theorem~\ref{thm:pointwise-tube-formula} and Corollary~\ref{thm:fractal-spray-tube-formula-monophase}, and later specialized to the class of self-similar tilings in Theorems~\ref{thm:ptwise-result-self-similar-case} and \ref{thm:Pointwise-tube-formula-with-error-term}, along with their respective corollaries, the `fractal tube formulas' obtained in Corollaries \ref{thm:ptwise-result-self-similar-case-simplified}--\ref{thm:ptwise-result-self-similar-case-monophase} and Corollary~\ref{thm:Fractal-tube-formula-with-error-term}.

These tube formulas express the volume $V(\sT,\ge)$ of the inner $\ge$-parallel sets $T_{-\ge}$ of the given fractal spray $\sT$ in $\bR^d$ with $d\geq 1$, as a (typically infinite) sum over the set $\sD_\sT\ci \bC$ of \emph{complex dimensions} of $\sT$. These complex dimensions are defined as the poles of the \emph{tubular zeta function} $\gzT=\gzT(\ge,s)$ associated with the spray $\sT$ (see Definition~\ref{def:volume-zeta-fn} and Proposition~\ref{prop:poles}), and each summand is equal to the residue%
  \footnote{We denote by $\res{f(s)}$ the residue of a meromorphic function $f$ at an isolated singularity \gw. Recall that this is the unique value \ga such that $(f(s)-\ga)/(s-\gw)$ has an analytic antiderivative in a punctured disk $\{s \suth 0 < |s-\gw| < \gd\}$; equivalently, the residue is the coefficient $a_{-1}$ in the Laurent expansion of $f$.}
  of $\gzT$ at the corresponding complex dimension.  
  Roughly speaking, we show that for all sufficiently small $\ge>0$,
\linenopax
  \begin{equation} \label{eqn:pointwise-tube-formula-prelim}
    V(\tiling,\ge)
    = \sum_{\pole \in \sD_\sT} \res[s=\pole]{\gzT(\ge,s)} \,, 
  \end{equation}
where $\gzT$ is suitably defined in terms of the scaling zeta function $\gzL$ of the underlying fractal string $\sL$ and the geometry of the generator $G$ of the spray. Here, $\gzL(s)$ is the meromorphic continuation of the Dirichlet series $\sum_{j=1}^\infty \ell_j^s$, initially defined for $\Re(s)$ sufficiently large; see Definition~\ref{def:scaling-zeta-function}. Moreover, the set $\sD_\sT=\sD_\sL\cup\{0,1,\ldots,d\}$ of complex dimensions of $\sT$ consists of the \emph{integer dimensions} $0,1, \ldots, d$ and the \emph{scaling dimensions}, which are the complex dimensions of the associated fractal string $\sL$ (defined as the poles of the scaling zeta function $\gzL$); see Definitions~\ref{def:complex-dimn} and \ref{def:scaling-zeta-function}. 

Depending on the assumptions, our pointwise tube formulas are either exact (as in \eqref{eqn:pointwise-tube-formula-prelim} just above) or else contain an error term, which can be estimated explicitly as $\ge$ tends to zero. In the latter case, the aforementioned sum of residues ranges only over the `visible' complex dimensions of $\sT$; i.e., those complex dimensions lying in a \emph{window} $W$, the region to the right of a (suitably chosen) vertical
contour S called the \emph{screen}; see Section~\ref{sec:zeta-fns} for the precise definitions.

The fractal tube formulas obtained in this paper extend previous results 
in two ways. First, we extend the scope of the tube formulas of \cite{TFCD} to fractal sprays whose generators may be arbitrary bounded open sets. Second, we give a pointwise version of the tube formula obtained in a distributional sense in \cite{TFCD}. This generalizes and clarifies the results previously obtained in \cite{TFCD,TFSST,GeometryOfSST,SST,FGCD,KTF}. 

Furthermore, formula \eqref{eqn:pointwise-tube-formula-prelim} (given precisely in Theorem~\ref{thm:pointwise-tube-formula}) directly extends \cite[Thm.~8.7]{FGCD} (the pointwise tube formula for fractal strings) to higher dimensions \emph{and implies as a corollary that all self-similar tilings have a fractal tube formula}; see Section~4.1 and Section~5 for more details. 
The tube formulas with error term also allow us to obtain information concerning the Minkowski measurability (and Minkowski content, when it exists) of fractal sprays and self-similar tilings under certain conditions; this is taken up in \cite{MMFS}.

The tube formulas in this paper may also be interpreted as fractal analogues of the Steiner formula and its generalizations (see Section~\ref{sec:motivation} below, in particular \eqref{eq:Steiner-formula-ext}). 
Steiner-type formulas express the volume of the $\ge$-parallel sets of a given set $A$ as a polynomial in $\ge$ with coefficients that just depend on $A$. Under the additional assumption that the complex dimensions (i.e., the poles of $\gzT$) are simple, our tube formula can be written in the following way (see Corollary~\ref{thm:ptwise-result-self-similar-case-simplified}):
\linenopax
  \begin{align} \label{eqn:self-similar-pointwise-tube-formula-simplified-intro}
    V(\tiling,\ge)
    &= \sum_{\pole\in \sD_{\sL}} c_\pole \ge^{d-\pole} 
     + \sum_{k=0}^{d} \left(c_k + e_k(\ge)\right) \ge^{d-k},
  \end{align}
  where the coefficients $c_\pole$, $c_k$ and $e_k(\ge)$ are given (in \eqref{eqn:self-similar-pointwise-tube-formula-coefficients-cw} -- \eqref{eqn:self-similar-pointwise-tube-formula-coefficients-ek}) in terms of the residues of $\gzL(s)$ and the geometry of the generator \gen. If, in addition, the tube formula of the generator \gen is a polynomial, then the coefficients $e_k(\ge)$ disappear (see Corollary~\ref{thm:ptwise-result-self-similar-case-monophase}) and the remaining coefficients are completely independent of $\ge$, just as the coefficients in the Steiner formula. (Compare the ``fractal power series'' in formula \eqref{eqn:ptwise-result-self-similar-case-monophase} to the polynomial in \eqref{eq:Steiner-formula-ext}.) 

This paper is part of the program of the present authors to develop a fractal notion of curvature in terms of complex dimensions, and to relate it to other notions of curvature, especially as developed in \cite{Steffen:thesis, Steffen:EulerChar}. 

We note that related questions are also being concurrently studied by other researchers \cite{Demir2}. Recently, some tube formulas extending aspects of \cite{TFSST,TFCD} have been obtained for tilings associated to graph-directed iterated function systems in \cite{Demir}.

For our purposes, the precise embedding of \tiling into \bRd is not  important and the mapping $\simm_j$ associated to $\gen^j$ is not emphasized. Due to the disjointness of the sets $\gen^j$ in Definition~\ref{def:fractal-spray}, the tube formulas require only those properties of fractal sprays which depend either on the geometry of the generator \gen or on the scaling ratios $\scale_j$.

In particular, for the generator $\gen$, we will require that the inner parallel volume of \gen admit a \emph{Steiner-like} formula (Definition~\ref{def:Steiner-like}); that is, it can be represented as a `polynomial' in $\ge$ where the coefficients are allowed to depend on $\ge$. The Steiner-like condition should not be viewed as a restriction on the class of allowed generators $\gen$ but as a choice of the representation of its inner parallel volume. In particular, Steiner-like representations are not unique.
For the fractal string \sL, we assume that it is \emph{languid} or \emph{strongly languid} (Definition~\ref{def:languid} or Definition~\ref{def:strongly-languid}), which is similar to the assumptions made in previous tube formula results. In the case of self-similar tilings, these languidity assumptions are always satisfied. We describe these conditions in detail in the following sections.

\begin{remark}\label{rem:renormalize}
  Without loss of generality, and in contrast to \cite{TFSST, TFCD, SST, GeometryOfSST}, we make a normalization assumption on the fractal string, for the remainder of the paper:
  \[\scale_1=1.\]  
  This assumption imposes no restrictions on the class of fractal sprays, but will simplify the exposition greatly. It amounts to choosing the largest connected set in the spray as the generator (or one of them, if there is not a unique largest set). 
  Also, instead of thinking of the numbers $\scale_j$ as distances (as in \cite{FGCD}, where the terms in a fractal string represent usually lengths of subintervals of \bR), we think of them as scales or scaling ratios. Thus, $\scale_1$ is the scaling factor of the identity mapping $\id:\bRd \to \bRd$, in accord with the original definition of fractal sprays given in \cite{LaPo2} (see also \cite{FGCD}) and the interpretation in terms of self-similar tilings discussed in Section~\ref{sec:Self-similar-tilings} and in \cite{TFCD,GeometryOfSST,SST}.
\end{remark}

\subsection{Tube formulas and classical geometry}
\label{sec:motivation}

To motivate our theorems, we give a brief description of tube formulas in geometry.  
Such formulas have myriad applications in convex, integral and differential geometry and have roots in the results of Steiner \cite{Steiner} (when $A$ is convex) and Weyl \cite{Weyl} (when $A$ is a smooth submanifold). For connections to convex and integral geometry, see \cite{KlainRota, Schneider}, and for connections to differential geometry, see \cite{BergerGostiaux,Gray}. 
For a bounded set $A\ci\bRd$ and $\ge\geq 0$, we denote the \emph{\ge-neighborhood} (or \emph{\ge-parallel set}) of $A$ by
\linenopax
\begin{align} \label{eqn:ext-nbd-of-A}
  A_\ge := \{x \in \bRd \less A \suth \dist(x,A) \leq \ge\}.
\end{align}
Sometimes $A_\ge$ is referred to as a ``collar'' in the literature. Note that some authors include the set $A$ in $A_\ge$, but we have instead excluded $A$ from $A_\ge$. In particular, $A_\ge$ is not a neighborhood of $A$ in the topological sense. 

The \emph{Steiner formula} is a foundational result of convex geometry which states that the tube formula of any compact convex subset of \bRd is a polynomial in \ge. 
\begin{theorem}[Steiner formula]
  \label{thm:Steiners-formula-basic}
  If $K \ci \bRd$ is convex and compact, then the $d$-dimensional volume of $K_\ge$ is given by
  \linenopax
  \begin{align}\label{eq:Steiner-formula-ext}
    \gl_d(K_\ge) = \sum_{k=0}^{d-1} \ge^{d-k} \ga_{d-k} V_k(K),
  \end{align}
  where the coefficients $V_k(K)$ depend only on the set $K$, 
  and $\ga_{j}$ is the volume of the unit ball in $\bR^j$. 
\end{theorem}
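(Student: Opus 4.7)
The plan is to prove \eqref{eq:Steiner-formula-ext} first for convex polytopes via a direct geometric decomposition, and then extend to arbitrary compact convex bodies by Hausdorff approximation. The central tool throughout is the \emph{metric projection} $\pi_K \colon \bRd \to K$ onto a nonempty closed convex set, which is well-defined and $1$-Lipschitz by strict convexity of the Euclidean norm. This allows one to partition $K_\ge$ according to which face of $K$ the nearest point lies on.

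First I would establish the formula for a convex polytope $P \ci \bRd$. For each proper face $F$ of $P$ with $\dim F = k$, let $N(P,F)$ denote the outward normal cone at $F$. Define
\[
  S_F := \{x \in P_\ge \suth \pi_P(x) \in \inn{F}\}.
\]
Then $\{S_F\}$ partitions $P_\ge$ up to a $\gl_d$-null set, and the variational characterization of $\pi_P$, together with the orthogonality of $N(P,F)$ to the affine hull of $F$, shows that $(y,v)\mapsto y+v$ is an isometry from $\inn{F}\times(N(P,F)\cap B_\ge(0))$ onto $S_F$. Since $N(P,F)$ sits in a $(d-k)$-dimensional subspace, Fubini and the definition of the normalized solid angle $\gamma(F,P):=\gl_{d-k}(N(P,F)\cap B_1(0))/\ga_{d-k}$ give $\gl_d(S_F) = \gl_k(F)\cdot\gamma(F,P)\cdot\ga_{d-k}\ge^{d-k}$. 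Summing over $F$ and grouping by dimension yields \eqref{eq:Steiner-formula-ext} for $P$, with $V_k(P):=\sum_{\dim F = k}\gamma(F,P)\gl_k(F)$.

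To pass to a general compact convex $K$, I would take convex polytopes $P_n \to K$ in the Hausdorff metric (for instance, convex hulls of finite $1/n$-nets). The functional $A \mapsto \gl_d(A_\ge)$ is continuous under Hausdorff convergence for each fixed $\ge>0$, so the polynomial identity for $P_n$ yields in the limit an identity for $K$. Because each identity is polynomial in $\ge$ of degree at most $d$, evaluating at $d$ distinct positive values of $\ge$ and inverting a Vandermonde system shows that the individual coefficients $V_k(P_n)$ converge to limits independent of the approximating sequence. Defining $V_k(K)$ to be these limits gives \eqref{eq:Steiner-formula-ext} and, as a byproduct, the Hausdorff-continuity of $V_k$ on the class of compact convex sets.

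The main obstacle is the polytope step: one must check both that the family $\{S_F\}$ covers $P_\ge$ up to measure zero (the exceptional set being $\pi_P^{-1}$ of the relative boundaries of faces, contained in a finite union of affine subspaces of dimension $<d$) and that each $S_F$ has the claimed product structure. Both rest on the orthogonality statement for $\pi_P$ on normal cones of faces, which is standard but requires careful bookkeeping of face incidences; everything downstream is then continuity and elementary linear algebra.
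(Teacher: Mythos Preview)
Your argument is the standard and correct route to the Steiner formula: decompose $P_\ge$ for a polytope $P$ into prisms over faces via the metric projection and normal cones, then pass to general compact convex bodies by Hausdorff approximation and a Vandermonde argument to extract coefficient convergence. There are no gaps.

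However, the paper does not actually prove Theorem~\ref{thm:Steiners-formula-basic}. It is stated in Section~\ref{sec:motivation} purely as classical background and motivation, with attribution to Steiner and pointers to \cite{KlainRota,Schneider} for details; the paper's own results begin only with the fractal tube formulas in Section~\ref{sec:pw-tube-formula}. So there is no ``paper's proof'' to compare against: you have supplied a complete argument where the paper simply cites the literature.
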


Note that formula \eqref{eq:Steiner-formula-ext} can be extended to 
  \linenopax
  \begin{align}\label{eq:Steiner-formula}
    \gl_d(K \cup K_\ge) = \sum_{k=0}^{d} \ge^{d-k} \ga_{d-k} V_k(K),
  \end{align}
  where $V_d(K):=\gl_d(G)$ is the volume of $K$. 
The coefficients $V_0(K),\ldots, V_{d}(K)$ are called \emph{intrinsic volumes} or \emph{Minkowski functionals} of $K$. 
They form a system of important geometric invariants which is, in a sense, complete. 
Some of them have a simple direct interpretation. In particular, $V_d$ is the volume of $K$ and $V_{d-1}$ is half the surface area of its boundary (provided $K$ has interior points); furthermore, $V_1$ is, up to a normalization constant, the \emph{mean width} of $K$, while $V_0$ is its \emph{Euler characteristic}. For nonempty convex sets $K$, $V_0$ is always equal to $1$. See \cite[Section~4.2]{Schneider} for further details.

When the set $K$ is sufficiently regular (i.e., when its boundary is a $C^2$ surface), the coefficients $V_k(K)$ can be given in terms of curvature tensors, and the Steiner formula coincides with the tube formula obtained by Weyl in \cite{Weyl} for smooth submanifolds of $\bR^d$ without boundary. In \cite{Federer}, Federer unified both approaches and extended these results to \emph{sets of positive reach} through the introduction of curvature measures $C_0(K,\cdot),\dots,C_d(K,\cdot)$ and a localization of the Steiner formula. A set $K \ci \bRd$ is said to have \emph{positive reach} iff there is some $\gd>0$ such that any point $x \in \bRd$ with $\dist(x,K)<\gd$ has a unique metric projection $p_K(x)$ to $K$; i.e., there is a unique point $p_K(x)$ in $K$ minimizing $dist(x,K)$. The supremum of all such numbers $\gd$ is called the \emph{reach} of $K$. 

%
The intrinsic volumes $V_k(K)$ turn out to be the total masses of the curvature measures: $V_k(K)=C_k(K,\bR^d)$ for $k=0,\ldots,d$.
Here, the volume measure $C_d(K,\cdot):=\lambda_d(K\cap\cdot)$ is added for completeness.

Federer's curvature measures and associated tube formulas have since been extended in various directions; see, for example, \cite{Schneider:CM, Zaehle86, Zaehle87, Fu85, RaZa1, RaZa2} along with the book \cite{Schneider} and the references therein. 
Recently (and most generally), so-called \emph{support measures} have been introduced in \cite{HuLaWe} (based on results in \cite{Stacho}) for arbitrary closed subsets of $\bR^d$, which were also shown to admit a Steiner-type formula.  


The total curvatures and the curvature measures above are defined as the coefficients of some tube formula. It is precisely this approach that we hope to emulate in a forthcoming paper, making use of the tube formulas obtained in the present paper. We believe that (for a suitable choice of the Steiner-like representation for the generators)  the coefficients appearing in our tube formulas may also be understood in terms of curvature, in a suitable sense, and that a localization of the results in this paper may lead to a notion of \emph{complex curvature measures} (or possibly, distributions) for fractal sets. We hope to explore such a possibility in a future work; see also Section~\ref{rem:search-for-fractal-curvatures}.

\subsection{Outline}
In Section~\ref{sec:Steiner-like}, we discuss the geometric hypotheses placed upon the generator(s) of the fractal spray. In Section~\ref{sec:zeta-fns}, we define a zeta function associated to a fractal spray \tiling, which will allow us to derive a pointwise tube formula for \tiling in Section~\ref{sec:pw-tube-formula}. In Section~\ref{sec:Self-similar-tilings}, we obtain the tube formula associated with a self-similar tiling (an important special case of a fractal spray). 
Several examples are discussed in Section~\ref{sec:Examples}. In Section~\ref{sec:the-proof}, we give the detailed proof of the main theorem (Theorem~\ref{thm:pointwise-tube-formula}), the pointwise tube formula for fractal sprays, as well as of Corollary~\ref{thm:ptwise-result-self-similar-case-simplified}, the fractal tube formula for self-similar tilings. Finally, in Section~\ref{sec:conclusion}, we discuss the relation with previously obtained tube formulas and give some possible directions for future research.


\section{Steiner-like formulas for generators.}
\label{sec:Steiner-like}

In this paper, we consider the interior of a set instead of its exterior, as discussed in Section~\ref{sec:motivation}. However, our primary requirement of a generator is that it has a similar (inner) tube formula; 
see Definition~\ref{def:Steiner-like} below and also Section~\ref{sec:motivation} for motivation of the nomenclature.

For a nonempty and bounded open set $\gen \ci\bR^d$, let $\genir=\gr(\gen)$ denote its \emph{inradius}; that is, the radius of the largest open ball
  contained in \gen. It is clear that $\genir$ is always positive and finite. In case \gen is the generator of a fractal spray $\sT$, we have  
  \linenopax
  \begin{align}\label{eqn:gr(gen)=g}
    \gr(\gen^j) 
    = \gr(\simm_j \gen) 
    = \gr(\scale_j \gen) 
    = \scale_j \gr(\gen) 
    = \scale_j \genir
  \end{align}
  for the inradii of the components $\gen^j$ of $\sT$.

It will be useful to write the inner parallel volume $V(\gen,\ge)$ of the set $\gen \ci \bRd$ as a ``polynomial-like'' expansion in \ge of degree at most $d$. More precisely, we have the following definition. 
\begin{defn}
  \label{def:Steiner-like}
  An \emph{(inner) Steiner-like formula} (or a \emph{Steiner-like representation} of the tube formula) for a nonempty and bounded open set $\gen \ci \bRd$ with inradius $\genir = \gr(\gen)$ is an expression for the volume of the inner \ge-parallel sets of \gen of the form
  \linenopax
  \begin{align}\label{eqn:def-prelim-Steiner-like-formula}
    V(\gen,\ge) = \sum_{k=0}^{d} \crv_k(\gen,\ge) \ge^{d-k},
    \qq\text{ for } 0 < \ge \leq \genir, 
  \end{align}
  where for each $k=0,1,\dots,d$, the coefficient function $\crv_k(\gen,\cdot)$ is a real-valued function on $(0,\genir]$ that is bounded on $[\ge_0, \genir]$ for every given $\ge_0 \in (0,\genir]$. 
\end{defn}

\begin{remark}[The choice of the coefficient functions $\crv_k(\gen,\ge)$]
  \label{rem:nonuniqueness-of-Steinerlike}
  Note that a representation of the form \eqref{eqn:def-prelim-Steiner-like-formula} always exists. For example, one can always take a trivial representation with $\crv_d(\gen, \ge) = V(\gen,\ge)$ and $\crv_0(\gen, \ge) = \dots = \crv_{d-1}(\gen, \ge) = 0$ on $(0,\genir]$. Another, slightly less trivial, representation is given by letting $\crv_k(\gen, \ge) = \frac1{d+1} V(\gen, \ge) \ge^{k-d}$ for $k=0,\dots,d$.
   For brevity, we may use the term ``Steiner-like generator/set'' to indicate that a fixed Steiner-like representation for the tube formula is intended, and write ``tube formula'' for  ``inner tube formula''.
  
  We have in mind nontrivial representations of the volume function, in which the coefficients allow interpretations in terms of curvature. Clearly, not every representation can have such an interpretation, and so some uniqueness condition will be needed to characterize the correct one for this purpose. However, this is not our aim here (this issue shall be addressed in a forthcoming paper by the same authors). For the main results of this paper, the tube formulas for fractal sprays (and tilings), we make no assumptions on the generators. In fact, our theorems provide many tube formulas for the same spray --- one for each choice of a Steiner-like representation for the generators. Our formulas should be seen as a general tool to transfer a given representation of the volume function of a generator into a tube formula for the generated sprays. We do not yet know what the canonical choice of the representation for the generator is, but our approach seems flexible enough to contain it. It seems that a reasonable strategy would be to choose the coefficients as ``constant as possible''. It is likely that some integrals of the support measures of \cite{HuLaWe} could provide the coefficients of some canonical representation.
\end{remark}

\begin{remark}[Monophase and pluriphase generators]
  \label{rem:monophase}\label{rem:monophase-and-pluriphase}
  As noted above, the coefficients in the expansion \eqref{eqn:def-prelim-Steiner-like-formula} are clearly not unique. However, if a set \gen has a Steiner-like representation with constant coefficients 
\linenopax
\begin{align*}
  \crv_k(\gen,\ge) = \crv_k(\gen)
  \qq\text{for all }
  \ge \in (0,\genir] \text{ and } k=0,1,\dots,d, 
\end{align*}
then such an expansion is unique, and the set is called \emph{monophase}.  More precisely, a bounded open set $\gen \ci \bRd$ is monophase iff its inner tube formula may be written in the form
  \linenopax
  \begin{align}\label{eqn:monophase}
    V(\gen,\ge) = 
      \sum_{k=0}^{d} \crv_k(\gen) \ge^{d-k}, 
      \qq 0 < \ge \leq \genir.
  \end{align}
  For monophase sets, we always choose this canonical Steiner-like representation. In this case, one has $\gk_d(G)=0$, since otherwise $\lim_{\ge\to 0}\lambda_d(G_{-\ge})\neq 0$. The monophase case has been treated in \cite{TFCD}, at least from the distributional perspective. A variety of natural and classical examples of self-similar tilings in \bRd have monophase generators; see \cite[Section~9]{TFCD}. Furthermore, all geometric (or ordinary) fractal strings (i.e., 1-dimensional fractal sprays) also have monophase generators, since \gen is always an interval; see \cite[Section~8.2]{TFCD}. In general, however, it is rather restrictive to assume that the generator is monophase because many sets (including generators of self-similar tilings) do not have a polynomial expansion with constant coefficients; see Section~\ref{sec:Examples}. 
  
  For monophase sets, the inner tube formula is a polynomial for $\ge \in (0,\genir]$ 
  and this is the reason for the nomenclature. More generally, as in \cite{TFCD,TFSST}, we say a bounded open set $\gen \ci \bRd$ is \emph{pluriphase} iff it has a Steiner-like tube formula with coefficient functions $\crv_k(\gen,\cdot)$  that are piecewise constant with respect to a finite partition of $[0,\genir]$.
  In short, the inner tube formula is piecewise polynomial, with finitely many pieces. (Such a representation is unique if it is assumed that one takes the partition to have as few components as possible.) We use the term \emph{general Steiner-like} (or \emph{Steiner-like with variable coefficients}) to emphasize the distinction from the special cases of monophase and pluriphase sets. It was conjectured in \cite{TFCD,TFSST} that all convex polyhedra are pluriphase. 
\end{remark}

\begin{remark}\label{rem:Steinerlike-in-TFCD}
  The above definition of ``Steiner-like'' is slightly more general than in \cite[Definition~5.1]{TFCD}, where it was introduced: in particular, the local integrability of each coefficient function $\crv_k(\gen,\mydot)$ and the limit condition for $\lim_{\ge\to0^+} \crv_k(\gen,\ge)$ have both been removed.  
  The content of Proposition~\ref{cor:V(Gj,ge)} was taken as a hypothesis in \cite{TFCD}, but is now seen to follow from the assumptions in Definition~\ref{def:Steiner-like}.
\end{remark}


No assumption is made on the uniqueness of the coefficients
$\crv_k(\gen,\ge)$ in Definition~\ref{def:Steiner-like} (as discussed in Remark~\ref{rem:nonuniqueness-of-Steinerlike}), but any
choice of coefficients for \gen satisfying
\eqref{eqn:def-prelim-Steiner-like-formula} gives rise to some coefficients $\crv_k(\gen ^j,\ge)$ for each set $\gen^j = \simm_j(\gen)$ by defining 
   \linenopax
   \begin{equation}\label{eqn:scaling}
        \crv_k(\gen^j,\ge) := \scale_j^k \crv_k(\gen,\scale_j^{-1}\ge),
        \qq \text{for } \; 0 < \ge \leq \gr(\gen ^j) = \scale_j \genir,
      \end{equation}
as is seen in the following proposition. Here and henceforth, $\scale_j^k$ denotes the \kth power of $\scale_j$.

\begin{prop}\label{cor:V(Gj,ge)}
  Let $\sT=\{\gen ^j\}$ be a fractal spray on a generator \gen with a given Steiner-like representation as in \eqref{eqn:def-prelim-Steiner-like-formula}. 
  Then the inner tube formula of each set $\gen^j$ has a Steiner-like representation in terms of the same coefficients:
  \linenopax
  \begin{equation}\label{eqn:V(Gj,ge)}
    V(\gen^j,\ge) = \sum_{k=0}^d \scale_j^k \crv_k(\gen,\scale_j^{-1} \ge) \ge^{d-k}, 
    \qq \text{for } 0<\ge \le \gr(\gen^j) = \scale_j \genir.
  \end{equation}
\end{prop}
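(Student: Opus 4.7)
The proof will rest on the standard scaling behavior of inner parallel sets under similarity transformations, followed by a direct substitution into the assumed Steiner-like representation of $V(\gen,\mydot)$.

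First, I would establish the key scaling identity:
\linenopax
\begin{equation*}
  V(\gen^j,\ge) = \scale_j^d \, V(\gen, \scale_j^{-1}\ge), \qquad \text{for all } 0 < \ge \leq \scale_j\genir.
\end{equation*}
This follows from two observations. Since $\simm_j$ is a similarity of $\bR^d$ with ratio $\scale_j$, we have $\dist(\simm_j(x), \simm_j(A)^c) = \scale_j \cdot \dist(x, A^c)$ for any bounded set $A$ and any $x \in A$; applying this to $A = \gen$ and unwinding the definition \eqref{eqn:def:inner-parallel-set} of the inner parallel set yields $(\gen^j)_{-\ge} = \simm_j\bigl(\gen_{-\scale_j^{-1}\ge}\bigr)$. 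Since $\simm_j$ scales $d$-dimensional Lebesgue measure by the factor $\scale_j^d$, the identity follows.

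Next, I would substitute the Steiner-like representation \eqref{eqn:def-prelim-Steiner-like-formula} for $\gen$, evaluated at $\scale_j^{-1}\ge$ (which is admissible since $0 < \scale_j^{-1}\ge \leq \genir$), obtaining
\linenopax
\begin{align*}
  V(\gen^j,\ge)
  &= \scale_j^d \sum_{k=0}^d \crv_k(\gen, \scale_j^{-1}\ge) \bigl(\scale_j^{-1}\ge\bigr)^{d-k} \\
  &= \sum_{k=0}^d \scale_j^d \scale_j^{-(d-k)} \crv_k(\gen, \scale_j^{-1}\ge) \, \ge^{d-k}
   = \sum_{k=0}^d \scale_j^k \crv_k(\gen, \scale_j^{-1}\ge) \, \ge^{d-k},
\end{align*}
which is exactly \eqref{eqn:V(Gj,ge)}. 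The admissible range $0 < \ge \leq \scale_j \genir = \gr(\gen^j)$ matches \eqref{eqn:gr(gen)=g}, completing the argument.

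There is no substantial obstacle here; the proposition is essentially a bookkeeping consequence of the scaling invariance of the inner parallel volume under similarities combined with the scaling convention \eqref{eqn:scaling} used to define the coefficient functions $\crv_k(\gen^j,\mydot)$. The only point requiring a line of care is verifying that the inner parallel set operation commutes with $\simm_j$, which requires using $A^c$ (rather than, say, $\partial A$) in the definition and invoking the fact that $\simm_j$ is a bijection of $\bR^d$ mapping $\gen$ onto $\gen^j$ and $\gen^c$ onto $(\gen^j)^c$.
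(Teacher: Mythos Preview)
Your proof is correct and follows essentially the same approach as the paper: both establish the scaling identity $V(\gen^j,\ge)=\scale_j^d\,V(\gen,\scale_j^{-1}\ge)$ via $(\gen^j)_{-\ge}=\simm_j(\gen_{-\scale_j^{-1}\ge})$ and the homogeneity of Lebesgue measure, then substitute the Steiner-like representation and collect powers of $\scale_j$. Your version is slightly more explicit about why the inner parallel set commutes with $\simm_j$, but the argument is the same.
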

\begin{proof}
   The motion invariance and homogeneity of Lebesgue measure implies that for each $j \geq 1$, $\lambda_d(\gen^j_{-\ge})=\lambda_d(\Psi_j (\gen_{-\ge/\scale_j}))=\scale_j^d \lambda_d(\gen_{-\ge/\scale_j})$, where $\Psi_j$ is the similarity transformation of \bRd described in Definition~\ref{def:fractal-spray}. Whence, by \eqref{eqn:def-prelim-Steiner-like-formula} and \eqref{eqn:scaling},
  \linenopax
  \begin{align*}
    V(\gen^j,\ge)
    = \scale_j^d V(\gen,\scale_j^{-1}\ge)
    = \scale_j^d \sum_{k=0}^{d} \crv_k(\gen, \scale_j^{-1}\ge) (\scale_j^{-1}\ge)^{d-k}
    = \sum_{k=0}^{d} \crv_k(\gen^j,\ge) \ge^{d-k}
  \end{align*}
  for $0 < \ge \leq \gr(\gen^j) = \scale_j\genir$, and the Steiner-like representation \eqref{eqn:V(Gj,ge)} follows. Note that the coefficients $\crv_k(\gen^j,\cdot)$ of $\gen^j$ clearly inherit the boundedness properties from the coefficients $\crv_k(\gen,\cdot)$ of $\gen$. 
\end{proof}

In the sequel, we will always work with the coefficient functions of the sets $\gen^j$ chosen according to \eqref{eqn:scaling}.  Proposition~\ref{cor:V(Gj,ge)} ensures this choice is always possible.

Up to this point, the coefficient functions $\crv_k(\gen,\cdot)$ in a Steiner-like formula for $\gen$ have been defined only for $0 < \ge \leq \genir = \gr(\gen)$. For $k=0,1,\dots,d$, 
we define $\crv_k(\gen) := \crv_k(\gen, \genir)$ and then extend $\crv_k(\gen,\ge)$ to $\ge \in (\genir,\iy)$ as constant functions with this value: 
\linenopax
\begin{equation}\label{eqn:coeff-ext}
  \crv_k(\gen,\ge) := \crv_k(\gen)
  \qq\text{for } \ge \geq \genir.
\end{equation}
Note that \eqref{eqn:def-prelim-Steiner-like-formula} need not hold for $\ge > \genir$ and so we have the freedom of the choice \eqref{eqn:coeff-ext}. We emphasize that this choice is vitally important for the tube formulas in Theorem~\ref{thm:pointwise-tube-formula} and its corollaries below to be correct.

%
%
%

Note that, for $\ge=g$, equation (\ref{eqn:def-prelim-Steiner-like-formula}) implies that 
the $d$-dimensional Lebesgue measure of \gen satisfies
\begin{equation} \label{eqn:leb-coeff-rel}
  \gl_d(\gen)
  = V(\gen, \genir)
  = \sum_{k=0}^{d} \crv_k(\gen, \genir) \genir^{d-k}
  = \sum_{k=0}^{d} \crv_k(\gen) \genir^{d-k}.
\end{equation}


\section{Zeta functions and complex dimensions}
\label{sec:zeta-fns}

We will require certain mild hypotheses on the fractal string
\sL which gives the scaling of the spray \tiling. These
conditions are phrased as growth conditions on a zeta function
associated with \sL, within a suitable window, as defined just below.

\begin{defn}\label{def:scaling-zeta-function}
  For a fractal string $\sL = \{\scale_j\}_{j=1}^\iy$, the \emph{scaling zeta function} is given by
  \begin{equation}\label{eqn:scaling-zeta}
    \gzL(s) = \sum_{j=1}^\iy \scale_j^s
  \end{equation}
  for $s \in \bC$ with $\Re(s) > \abscissa$, where \abscissa is the abscissa of convergence of this series. (Compare with \cite[Def.\ 1.8]{FGCD}, where $\gzL$ is called the \emph{geometric zeta function} of $\sL$.) 
  Recall that $\abscissa := \inf\{\ga \in \bR \suth \sum_{j=1}^\iy \scale_j^\ga < \iy\}$ and that \gzL is holomorphic (i.e., analytic) for $\Re s > \abscissa$. Henceforth, if $W \ci \bC$ contains $\{\Re s > \abscissa\}$ and \gzL has a meromorphic continuation (necessarily unique) to a connected open neighborhood of $W$, we abuse notation and continue to denote by \gzL its meromorphic extension. Under these assumptions, each pole $\gw \in W$ of \gzL is called a \emph{visible complex dimension} of \sL and
  the set of visible complex dimensions is written as
  \begin{equation}\label{comp-dim}
    \sD_{\sL}(W)=\{\pole \in W \suth \pole \mbox{ is a pole of } \gzL\}.
  \end{equation}
\end{defn}
Moreover, in the special case when \gzL has a meromorphic continuation to all of \bC, we may choose $W = \bC$ and then simply write $\sD_{\sL}:=\sD_{\sL}(\bC)$ and refer to $\sD_{\sL}$ as \emph{the complex dimensions of \sL}.

In practice, $W$ will be a window (the part of $\bC$ to the right of a screen $S$) as in Definition \ref{def:screen}, just below. 
The following three definitions are excerpted from \cite[Section~5.3]{FGCD}.

\begin{defn}
  \label{def:screen}
  Let $S:\bR \to (-\iy, \abscissa]$ be a bounded Lipschitz continuous function. Then the \emph{screen}
  is $S = \{S(t) + \ii t \suth t \in \bR\}$, the graph of a function with
  the axes interchanged. Here and henceforth, we denote the imaginary unit by $\ii := \sqrt{-1}$. We let
  \linenopax
  \begin{align}
    \label{eqn:infS}
    \inf S &:= \inf\nolimits_{t \in \bR} S(t) = \inf\{\Re s \suth s \in S\}, \text{ and} \\
    \label{eqn:supS}
    \sup S &:= \sup\nolimits_{t \in \bR} S(t) = \sup\{\Re s \suth s \in S\}.
  \end{align}
  The screen is thus a vertical contour in \bC. The region to the
  right of the screen is the set $W$, called the \emph{window}:
  \linenopax
  \begin{align}
    \label{eqn:window}
    W &:= \{z \in \bC \suth \Re z \geq S(\Im z)\}.
  \end{align}
  For a given string \sL, we always choose $S$ to avoid $\sD_{\sL}$ and such that \gzL can be meromorphic{ally} continued to an open neighborhood of $W$. We also assume that $\sup S \leq \abscissa$, that is, $S(t) \leq \abscissa$ for every $t \in \bR$.
\end{defn}

\begin{defn}
  \label{def:languid}
  The fractal string \sL is said to be \emph{languid} if its
  associated zeta function \gzL satisfies certain horizontal and
  vertical growth conditions. Specifically, let
  $\{T_n\}_{n \in \bZ}$ be a sequence in \bR such that $T_{-n} < 0 < T_n$
  for $n \geq 1$, and
  \linenopax
  \begin{align}
    \label{eqn:Tn seq conds}
    \lim_{n \to \iy} T_n = \iy,
    \lim_{n \to \iy} T_{-n} = -\iy, \text{ and }
    \lim_{n \to \iy} \frac{T_n}{|T_{-n}|} = 1.
  \end{align}
  For \sL to be languid, there must exist constants $\languidOrder \in \bR$ and $c>0$, and a sequence $\{T_n\}$ as described in \eqref{eqn:Tn seq conds}, such that: \\ 
  \vstr 
  For all $n \in \bZ$ and all $\ga \geq S(T_n)$,
  \linenopax
  \begin{align}
    \label{eqn:L1} \tag*{\textbf{L1}}
    |\gzL(\ga + \ii T_n)| \leq c \cdot \left(|T_n| + 1\right)^\languidOrder,
  \end{align}
  and for all $t \in \bR$, $|t| \geq 1$,
  \linenopax
  \begin{align}
    \label{eqn:L2} \tag*{\textbf{L2}}
    |\gzL(S(t) + \ii t)| \leq c \cdot |t|^\languidOrder.
  \end{align}
  In this case, \sL is said to be \emph{languid of order \languidOrder}.
\end{defn}

\begin{defn}
  \label{def:strongly-languid}
  The fractal string \sL is said to be \emph{strongly languid} of order \languidOrder and with constant $A$ iff it satisfies \textbf{\ref{eqn:L1}} and the following condition \textbf{\ref{eqn:L2'}}, which is clearly stronger than \textbf{\ref{eqn:L2}}:\\
  \vstr 
  There exists a sequence of screens $S_m$ for $m \geq 1$,
  $t \in \bR$, with $\sup S_m \to -\iy$ as
  $m \to \iy$, and with a uniform Lipschitz bound, for which there exist constants $\languidOrder \in \bR$ and $A,c>0$ such that 
  \linenopax
  \begin{align}
    \label{eqn:L2'}  \tag*{\textbf{L2$'$}}
    |\gzL(S_m(t) + \ii t)| \leq c \cdot A^{|S_m(t)|}(|t|+1)^\languidOrder,
  \end{align}
  for all $t \in \bR$ and $m \geq 1$.
\end{defn}

By saying ``\gzL is languid'', we mean just that \sL is languid.
In the rest of this paper, \tiling is assumed to be a fractal spray with a generator $\gen \ci \bRd$, scaled by the fractal string $\sL = \{\scale_j\}_{j=1}^\iy$ with $\ell_1=1$ as in Remark~\ref{rem:renormalize}. The tubular zeta function first appeared in \cite{TFCD}, but we need to modify the definition in order to extend it to the case when the generators are not monophase; thus, the following definition is new.

\begin{defn}\label{def:volume-zeta-fn}
  The \emph{tubular zeta function} \gzT of the fractal spray \tiling is defined by
  \begin{equation}\label{eqn:def:volume-zeta}
    \gzT(\ge,s)
    = \ge^{d-s} \sum_{j=1}^\iy \scale_j^s \left(
      \sum_{k=0}^{d}\frac{\genir^{s-k}}{s-k} \crv_k(\gen,\scale_j^{-1}\ge) - \frac{\genir^{s-d}}{s-d}\gl_d(G)\right)
  \end{equation}
  for every $\ge\in(0,\iy)$ and for each $s \in \bC$ such that the sum converges absolutely. As in Definition~\ref{def:scaling-zeta-function}, we will henceforth abuse notation and use $\gzT(\ge,s)$ to mean a meromorphic extension of the function defined by the formula \eqref{eqn:def:volume-zeta}, as convenient.
\end{defn}

Note that by \eqref{eqn:leb-coeff-rel}, \eqref{eqn:scaling-zeta}, and Proposition~\ref{cor:V(Gj,ge)}, for $\ge \geq \genir$, one has
\linenopax
\begin{align}\label{eqn:gzT-for-big-epsilon}
  \gzT(\ge,s)
    &= \ge^{d-s} \gzL(s) \left(
      \sum_{k=0}^{d}\frac{\genir^{s-k} \crv_k(\gen)}{s-k}- \frac{\genir^{s-d} \gl_d(G)}{s-d}\right) 
    = \ge^{d-s} \gzL(s) \sum_{k=0}^{d-1}\frac{\genir^{s-k} \crv_k(\gen) (d-k)}{(s-k)(d-s)}.
\end{align}
Since $\gzL(s)$ is a Dirichlet series, it has an \emph{abscissa of convergence}: there is a unique number $\abscissa \in [-\iy,\iy]$ such that $\gzL(s)$ converges absolutely for $s$ with $\Re s > \abscissa$ and diverges for $s$ with $\Re s < \abscissa$. The abscissa of convergence is thus analogous to the radius of convergence of a power series.
  Note that in all reasonable situations we have $0 \leq \abscissa \leq d$. Indeed, $\abscissa \geq 0$ follows immediately from the non-finiteness of the fractal string (assumed in Definition~\ref{def:fractal_string}), and $\abscissa \leq d$ follows if one requires that the generated fractal spray $\sT$ have finite total volume. (Note that for fractal sprays with infinite total volume, the question for a (global) tube formula does not make sense.)
For the scaling zeta functions of the self-similar tilings discussed in Section~\ref{sec:Self-similar-tilings}, one has $0 < \abscissa < d$, and 
\abscissa coincides with the Minkowski dimension, Hausdorff dimension, and similarity dimension of the associated self-similar set; for a precise statement, please see \cite[Section~4.3]{TFCD} and Section~\ref{sec:Self-similar-tilings} below (especially Remark~\ref{rem:incarnations-of-D}).

Note that the tubular zeta function \gzT may be viewed as a generating function for the geometry of the fractal spray \tiling.

Proposition~\ref{prop:poles} clarifies the relation between the scaling zeta function \gzL and the tubular zeta function \gzT of a fractal spray \tiling. It also motivates and justifies the definition of complex dimensions of fractal sprays. The intended application of Proposition~\ref{prop:poles} is with \gW as a suitable open  neighborhood of a window $W$ for the scaling zeta function \gzL of \tiling, as in Definition~\ref{def:screen}. (Proposition~\ref{prop:poles} is extended significantly in Theorem~\ref{thm:Meromorphic-continuation-of-gzT} and Lemma~\ref{thm:residues-of-gzT}.)

\begin{prop} \label{prop:poles}
  If \abscissa is the abscissa of convergence of $\gzL$, then for all $\ge > 0$, the series in \eqref{eqn:def:volume-zeta} defining $\gzT(\ge,s)$ converges absolutely for any fixed $s \in \bC \less \{0,1,\dots,d\}$ with $\Re s > \abscissa$. More generally, suppose $\gzL$ is meromorphic in a connected open set \gW containing $\{\Re s > \abscissa\}$. Then for all $\ge > 0$, the function $\gzT(\ge, \mydot)$ is meromorphic in \gW and each pole $\gw \in \gW$ of $\gzT(\ge, \mydot)$ is a pole of \gzL or belongs to the set $\{0,1,\ldots,d\}$.
\end{prop}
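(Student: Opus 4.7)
The idea is to split the defining series of $\gzT(\ge,s)$ into a finite head and an infinite tail, chosen so that the coefficient functions $\crv_k(\gen,\scale_j^{-1}\ge)$ collapse to the constants $\crv_k(\gen)$ on the tail. The tail then factors cleanly as a rational function of $s$ times a Dirichlet series that inherits its behavior from $\gzL$, while the head is a finite linear combination of elementary rational functions.

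Fix $\ge>0$. Since $\scale_j\to 0$, there is a finite index $J=J(\ge)$ such that $\scale_j^{-1}\ge>\genir$ for every $j>J$. Decompose $\gzT(\ge,s)=F(\ge,s)+R(\ge,s)$ accordingly, with $F$ summing over $j\leq J$ and $R$ summing over $j>J$. For $j>J$, the extension convention \eqref{eqn:coeff-ext} forces $\crv_k(\gen,\scale_j^{-1}\ge)=\crv_k(\gen)$ for every $k$, so the inner bracket in \eqref{eqn:def:volume-zeta} is independent of $j$. Eliminating $\gl_d(\gen)$ by means of \eqref{eqn:leb-coeff-rel}, exactly as in the derivation of \eqref{eqn:gzT-for-big-epsilon}, one obtains
\linenopax
\[
  R(\ge,s) \;=\; \ge^{d-s}\, C(s) \sum_{j>J} \scale_j^s,
  \qquad
  C(s) \;:=\; \sum_{k=0}^{d-1} \frac{\genir^{s-k}\,\crv_k(\gen)\,(d-k)}{(s-k)(d-s)}.
\]
The factor $C(s)$ is a rational function with poles contained in $\{0,1,\dots,d\}$, while $\sum_{j>J}\scale_j^s$ differs from $\gzL(s)$ by the entire sum $\sum_{j\leq J}\scale_j^s$. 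Hence $R(\ge,\cdot)$ converges absolutely on $\{\Re s>\abscissa\}\less\{0,1,\dots,d\}$ and extends meromorphically to \gW with exactly the poles of $\gzL$ in \gW plus (a subset of) $\{0,1,\dots,d\}$.

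For the head, each of the finitely many summands is $\scale_j^s$ (entire in $s$) times a $\bC$-linear combination of the elementary functions $\genir^{s-k}/(s-k)$ and $\genir^{s-d}/(s-d)$, with scalar coefficients $\crv_k(\gen,\scale_j^{-1}\ge)$ that are finite by the boundedness clause of Definition~\ref{def:Steiner-like} (applied on $(0,\genir]$, where $\scale_j^{-1}\ge$ lies for $j\leq J$). Consequently $F(\ge,\cdot)$ is meromorphic on all of \bC with poles contained in $\{0,1,\dots,d\}$, and is trivially absolutely convergent there. Summing the two parts yields simultaneously the absolute convergence on $\{\Re s>\abscissa\}\less\{0,1,\dots,d\}$ and the meromorphic continuation of $\gzT(\ge,\cdot)$ to \gW with poles in $\sD_\sL(\gW)\cup\{0,1,\dots,d\}$. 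The main technical point is the tail simplification from the inner bracket in \eqref{eqn:def:volume-zeta} to the $j$-independent $C(s)$; this relies crucially on the convention \eqref{eqn:coeff-ext} having been chosen so that on the tail the coefficient arguments always lie in the regime $\scale_j^{-1}\ge>\genir$, where $\crv_k(\gen,\cdot)$ is constant.
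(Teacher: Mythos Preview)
Your proof is correct and uses essentially the same idea as the paper: split the $j$-sum at the index $J(\ge)$ beyond which $\scale_j^{-1}\ge \geq \genir$, invoke the extension convention \eqref{eqn:coeff-ext} to collapse the tail coefficients to constants, and observe that the tail then factors through $\gzL$ (up to an entire correction) while the head is a finite sum of rational functions.

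The only organizational difference is that you split the outer $j$-sum first into $F+R$, whereas the paper first interchanges the $j$- and $k$-sums and then splits each inner $j$-sum at $J$ (see \eqref{eqn:zeta-expanded}--\eqref{eqn:protosplit}). Your decomposition is in fact the ``head/tail'' splitting $\gzT=\gzT[\head]+\gzT[\tail]$ that the paper develops later in Section~\ref{sec:splitting} (cf.\ \eqref{eqn:volume-zeta-split2}), so you have anticipated the form used in the proof of the main theorem; the paper's version of the proposition keeps the $\gl_d(\gen)$ term separate rather than absorbing it via \eqref{eqn:leb-coeff-rel}, but this is immaterial. One small imprecision: for $j>J$ one has $\scale_j^{-1}\ge \geq \genir$ rather than strict inequality, but \eqref{eqn:coeff-ext} covers this case as well.
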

\begin{proof}
  Fix $\ge>0$. 
  Upon expanding \eqref{eqn:def:volume-zeta} of Definition~\ref{def:volume-zeta-fn} and interchanging the sums, the tubular zeta function becomes
  \linenopax
  \begin{align}\label{eqn:zeta-expanded}
    \gzT(\ge,s)
    &= \sum_{k=0}^{d}\frac{ \ge^{d-s} \genir^{s-k} }{s-k}\sum_{j=1}^\iy \scale_j^s\crv_k(\gen,\scale_j^{-1}\ge) - \frac{\ge^{d-s} \genir^{s-d}}{s-d} \gzL(s) \gl_d(G).
  \end{align}
  It is clear that the second term on the right-hand side of \eqref{eqn:zeta-expanded}  is convergent for $s$ as in the hypotheses, so it remains to check that the first term is similarly convergent for each $k$.
  
  Since \ge is fixed and $\scale_j \searrow 0$, define $J=J(\ge)$ to be the index of the last scale greater than \ge:
  \linenopax
  \begin{align}\label{eqn:J(eps)}
    J(\ge) := \max\{j \geq 1 \suth \scale_j^{-1} \ge < \genir \} \vee 0. 
  \end{align}
At the end of \eqref{eqn:J(eps)}, ``$\vee 0$'' indicates that $J(\ge) = 0$ for $\ge \geq \scale_1 \genir$.%
  \footnote{By convention, $\sup \es = -\iy$. Thus the maximum $\vee 0$ is included so that $J(\ge) = 0$ when $\ge \geq \scale_1 \genir$.}
Now $\crv_k(\gen,\scale_j^{-1}\ge) = \crv_k(\gen)$ for all $j > J$, by \eqref{eqn:coeff-ext}, and so
  \linenopax
  \begin{align}\label{eqn:protosplit}
    \sum_{j=1}^\iy \scale_j^s \crv_k(\gen,\scale_j^{-1}\ge)
    = \sum_{j=1}^{J} \scale_j^s \crv_k(\gen,\scale_j^{-1}\ge)
     + \crv_k(\gen) \sum_{j=J+1}^{\iy} \scale_j^s.
  \end{align}
  Observe that the first sum on the right-hand side of \eqref{eqn:protosplit} is entire, as a finite sum of the entire functions $c_x x^s$, and that the second sum on the right-hand side of \eqref{eqn:protosplit} converges absolutely exactly where $\gzL$ does; that is, for $\Re s > \abscissa$.

  Justification of the claims of meromorphicity are obtained by parallel reasoning; the decomposition \eqref{eqn:protosplit} shows that, except possibly for $\{0,1,\dots,d\}$, the tubular zeta function is meromorphic precisely where the scaling zeta function is.
\end{proof}

\begin{defn}[Complex dimensions]\label{def:complex-dimn}
  The set 
\[ 
\sD_{\sT} = \sD_{\sL} \cup \{0,1,\dots,d\}
\]
of (potential) poles of \gzT is called the set of \emph{complex dimensions} of $\sT$. Let $W \ci \bC$ be a window for \gzL as in Definition~\ref{def:screen}, so that \gzL is meromorphic in an open neighborhood of $W$. 
  (Proposition~\ref{prop:poles} thus implies that for each fixed $\ge > 0$, the function $\zeta_\sT(\ge,\mydot)$ is also meromorphic in an open neighborhood of $W$.) 
  Then $\sD_\tiling(W) := \sD_\tiling \cap W$ is called the set of \emph{visible complex dimensions} of $\sL$ in $W$, in parallel with \eqref{comp-dim}. We refer to $\sD_\sL$ as the \emph{scaling} complex dimensions and $\{0,1,\dots,d\}$ as the \emph{integer} complex dimensions of \tiling. 
\end{defn}




\section{Pointwise tube formulas for fractal sprays}
\label{sec:pw-tube-formula}

Now we are ready to state one of our main results, a pointwise tube formula for a fractal spray \tiling, which, for $\ge > 0$, describes the inner parallel volume $V(\tiling,\ge)$ as a sum of the residues of its tubular zeta function $\gzT(\ge,s)$. For fractal sprays with more than one generator, one can consider each generator independently, and the tube formula of the whole spray is then given by the sum of the expressions derived for the sprays on each single generator. Thus, there is no loss of generality in considering only the case of a single generator in Theorem~\ref{thm:pointwise-tube-formula}.

\begin{theorem}[Pointwise tube formula for fractal sprays]
  \label{thm:pointwise-tube-formula}
  Let \tiling be a fractal spray given by the fractal string $\sL = \{\scale_j\}_{j=1}^\iy$ and the generator $\gen \ci \bRd$. Fix a Steiner-like representation for $G$, as in \eqref{eqn:def-prelim-Steiner-like-formula}, and assume that the abscissa of convergence \abscissa of the scaling zeta function \gzL of \tiling is strictly smaller than $d$.
  
  \emph{Tube formula with error term}.
  If \gzL is languid of order $\languidOrder < 1$ for some screen $S$ for which $S(0) < 0$ (so that $W$ contains the integers $\{0,1,\ldots,d\}$), then for all $\ge > 0$,
  \linenopax
  \begin{equation} \label{eqn:pointwise-tube-formula}
    V(\tiling,\ge)
    = \sum_{\pole\in \sD_{\tiling}(W)} \res[s=\pole]{\gzT(\ge,s)} + \gl_d(\gen)\gzL(d) + \R(\ge),
  \end{equation}
  where the error term $\R$ (given explicitly in Remark~\ref{rem:error-term} below) is estimated by $\R(\ge) = O(\ge^{d-\sup S})$ as $\ge \to
  0^+$.

  \emph{Tube formula without error term}.  
  If \gzL is strongly languid of order $\languidOrder < 2$ and with constant $A
  > 0$, then the choice $W = \bC$ for the window is possible in
  \eqref{eqn:pointwise-tube-formula}, implying that the error
  term $\R(\ge)$ vanishes identically for all $0 < \ge < \min\{\genir, A^{-1} \genir\}$. 
\end{theorem}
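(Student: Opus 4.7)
The plan is to prove the theorem by the classical contour-integration approach of \cite{FGCD,TFCD}: represent $V(\sT,\ge)$ as a contour integral of $\gzT(\ge,s)$ along a vertical line $\Re s = c$ with $c>d$, shift this contour leftward across the screen $S$, pick up the residues of $\gzT(\ge,\cdot)$ at the visible complex dimensions in $W$, and estimate the remaining integral over $S$ via the languidity hypothesis.

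\textbf{Step 1 (Integral representation).} I would first establish the identity
\begin{equation}\label{eqn:plan-int-rep}
V(\sT,\ge) \;=\; \frac{1}{2\pi\ii}\int_{c-\ii\infty}^{c+\ii\infty}\gzT(\ge,s)\,ds \;+\; \gl_d(\gen)\,\gzL(d), \qquad c>d.
\end{equation}
The key tool is the Mellin-type identity
\[
\frac{1}{2\pi\ii}\int_{c+\ii\bR}\frac{a^{s-k}\ge^{d-s}}{s-k}\,ds \;=\; \ge^{d-k}\,\mathbf{1}_{(0,a)}(\ge), \qquad c>k,
\]
applied with $a=\scale_j\genir$, which encodes the threshold $\ge=\scale_j\genir$ inside a contour integral. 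Combining this with the Steiner-like representation of $V(\gen^j,\ge)$ from Proposition~\ref{cor:V(Gj,ge)} (valid for $\ge\le \scale_j\genir$) and the complementary identity $V(\gen^j,\ge)=\scale_j^d\gl_d(\gen)$ for $\ge\ge \scale_j\genir$, the contour integral of $\gzT(\ge,\cdot)$ reproduces the sum $\sum_{j\le J(\ge)}V(\gen^j,\ge)-\gl_d(\gen)\sum_{j\le J(\ge)}\scale_j^d$. Using $\sum_{j>J(\ge)}V(\gen^j,\ge)=\gl_d(\gen)\bigl(\gzL(d)-\sum_{j\le J(\ge)}\scale_j^d\bigr)$, the missing piece is precisely $\gl_d(\gen)\gzL(d)$. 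Absolute convergence of all series for $\Re s>d\ge\abscissa$ justifies the interchange of sum and integral.

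\textbf{Step 2 (Contour shift and error estimate).} By Proposition~\ref{prop:poles}, $\gzT(\ge,\cdot)$ is meromorphic on a neighborhood of $W$ with pole set contained in $\sD_\sT(W)$. Apply the residue theorem to the rectangles with vertices $c\pm\ii T_n$ and $S(\pm T_n)\pm\ii T_n$ from Definition~\ref{def:languid}, and let $n\to\iy$. Decomposing $\gzT(\ge,s)$ via \eqref{eqn:protosplit} into an entire ``finite-$j$'' part and a tail factoring as $\ge^{d-s}\gzL(s)\,g(s)$, where $g(s)$ is the rational-exponential factor from \eqref{eqn:gzT-for-big-epsilon}, condition \textbf{L1} together with the $|t|^{-2}$ decay of $g(s)$ forces the horizontal segments to vanish, yielding
\[
\frac{1}{2\pi\ii}\int_c\gzT(\ge,s)\,ds \;=\; \sum_{\pole\in\sD_\sT(W)}\res[s=\pole]{\gzT(\ge,s)} \;+\; \R(\ge), \qquad \R(\ge):=\frac{1}{2\pi\ii}\int_S\gzT(\ge,s)\,ds.
\]
Combined with \eqref{eqn:plan-int-rep}, this gives \eqref{eqn:pointwise-tube-formula}. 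On $S$, $\ge^{d-\Re s}\le \ge^{d-\sup S}$ for $\ge\le 1$, $|\gzL(S(t)+\ii t)|\le c(|t|+1)^\languidOrder$ by \textbf{L2}, and $|g(S(t)+\ii t)|=O(|t|^{-2})$. The resulting integral $\int_\bR(|t|+1)^{\languidOrder-2}\,dt$ converges precisely when $\languidOrder<1$, giving $\R(\ge)=O(\ge^{d-\sup S})$.

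\textbf{Step 3 (Strong languidity).} For the exact formula, replace $S$ by the sequence $\{S_m\}$ with $\sup S_m\to-\iy$ and repeat the shift. The extra factor $A^{|S_m(t)|}$ in \textbf{L2$'$} combines with $\ge^{-\Re s}$ and the $\genir^{\Re s}$ factor in $\gzT$ to produce $(A\ge)^{|S_m(t)|}\cdot\genir^{-|S_m(t)|}$ up to bounded terms; provided $0<\ge<\min\{\genir,A^{-1}\genir\}$ this tends to $0$ as $m\to\iy$, so $W=\bC$ is admissible and $\R\equiv0$. The weaker hypothesis $\languidOrder<2$ suffices because convergence of the screen integral is now enforced by the geometric decay $(A\ge/\genir)^{|S_m(t)|}$ rather than by the $|t|^{-2}$ factor from $g(s)$.

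The main obstacle is the bookkeeping required to justify the contour-shifting across $\Re s=\abscissa$, where the Dirichlet series defining $\gzL(s)$ and the infinite series in $\gzT(\ge,s)$ diverge. The decomposition \eqref{eqn:protosplit} shows that meromorphic continuation of $\gzT(\ge,\cdot)$ reduces to that of $\gzL$, but absolute convergence of the residue sum and vanishing of the horizontal contour pieces both rely on a delicate interplay between the languidity of $\gzL$ and the $|t|^{-2}$ decay of the explicit factor $g(s)$.
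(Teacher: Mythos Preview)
Your approach is in the same spirit as the paper's --- Perron-type integral representation plus a contour shift to the screen, with languidity controlling the horizontal and screen pieces --- but the paper organizes the argument differently, and your sketch has a couple of soft spots.

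The paper does \emph{not} integrate the full $\gzT(\ge,s)$ along $\Re s=c>d$. It first performs the head/tail split $\gzT=\gzT[\head]+\gzT[\tail]$ and correspondingly $V(\sT,\ge)=V_{\mathrm{head}}+V_{\mathrm{tail}}$, then verifies the ``head'' identity $V_{\mathrm{head}}=\sum_{k=0}^d\res[s=k]{\gzT[\head](\ge,s)}$ by direct inspection (both sides equal $\sum_{k}\ge^{d-k}\sum_{j\le J(\ge)}\ell_j^k f_k(\ell_j^{-1}\ge)$), with no contour integral at all. Only the tail $\gzT[\tail](\ge,s)=\ge^{d-s}\gzL(s)g(s)$ is realized as a contour integral, and the vertical line is taken at $c\in(\max\{D,d-1\},d)$ rather than $c>d$. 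That placement makes the integrand absolutely integrable (the factor $g(s)$ genuinely decays like $|t|^{-2}$), so Fubini applies cleanly. In your Step~1, by contrast, each $j$-term of $\gzT[\head]$ decays only like $|t|^{-1}$ along $\Re s=c$, so the integral is merely conditionally convergent and the ``absolute convergence of all series'' you cite does not justify the interchange of $\sum_j$ and $\int$.

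Two smaller points. First, your ``finite-$j$ part'' is not entire: it has simple poles at $s=0,1,\dots,d$, and those residues are part of $\sum_{\pole\in\sD_\sT(W)}\res{\gzT}$. Second, the error term in Remark~\ref{rem:error-term} is $\int_S\gzT[\tail]$, not your $\int_S\gzT$; these agree under $S(0)<0$ because the head integrates to zero over $S$, but that needs its own (easy) argument and is precisely why the hypothesis $S(0)<0$ enters. In the strongly languid case the paper first sends $m\to\infty$ with $n$ fixed (the geometric factor $(A\ge/\genir)^{|\sup S_m|}$ kills the screen integral for any $\languidOrder$), and only then sends $n\to\infty$; the condition $\languidOrder<2$ is used for the now semi-infinite \emph{horizontal} segments, not for the screen integral as you state.
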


Theorem~\ref{thm:pointwise-tube-formula} and its corollaries are consistent with earlier results in \cite{FGCD} and \cite{TFCD} and generalize them in several respects; see Section~\ref{sec:consistency}. 
We will give the rather lengthy proof of Theorem~\ref{thm:pointwise-tube-formula}  
in Section~\ref{sec:the-proof-sprays}. For a description of one of the main new ideas and techniques, 
we refer the reader to Remark~\ref{rem:obs1}. 

Note that the tube formula without error term is an \emph{exact} pointwise formula. For this result, one must assume that the sequence of screens $\{S_m\}_{m=1}^\iy$ of Definition~\ref{def:strongly-languid} satisfies $S_m(0) < 0$ for each $m$. However, there is no loss of generality because $\sup S_m \to -\iy$.

The following result is really a corollary of the proof of the first part of Theorem~\ref{thm:pointwise-tube-formula}. For this reason, its short proof is provided in Section~\ref{sec:proof-of-cor-monophase} at the very end of Section~\ref{sec:the-proof-sprays}.

\begin{cor}[The monophase case]
  \label{thm:fractal-spray-tube-formula-monophase}
  If, in addition to the hypotheses of the first part of Theorem~\ref{thm:pointwise-tube-formula}, we assume that \gen is monophase, then the tube formula with error term remains valid (with the same error estimate), without the restriction that $S(0)<0$, provided this hypothesis is replaced by the much weaker condition that the screen $S$ avoids the integers $0,1,\ldots,d$. Hence, in particular, it still holds for a screen $S$ that is arbitrarily close to the vertical line $\Re s = \abscissa$.
  Moreover, the error term $\R$ is given by \eqref{eqn:pointwise-error2} (or \eqref{eqn:pointwise-error}).
\end{cor}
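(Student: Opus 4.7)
The plan is to retrace the proof of the first part of Theorem~\ref{thm:pointwise-tube-formula} and pinpoint where the hypothesis $S(0)<0$ is actually invoked; in the monophase case, the relevant step disappears entirely.

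First, I would observe that when $\gen$ is monophase, the identity $\crv_k(\gen,\ge')=\crv_k(\gen)$ holds for every $\ge'>0$ (combining the monophase definition \eqref{eqn:monophase} with the extension \eqref{eqn:coeff-ext}). In particular, the split \eqref{eqn:protosplit} used in the proof of Proposition~\ref{prop:poles} becomes trivial: the finite piece $\sum_{j=1}^{J(\ge)}\scale_j^{s}\crv_k(\gen,\scale_j^{-1}\ge)$ combines with its tail $\crv_k(\gen)\sum_{j>J(\ge)}\scale_j^{s}$ to yield $\crv_k(\gen)\gzL(s)$ for every $\ge>0$. Consequently, the closed-form expression \eqref{eqn:gzT-for-big-epsilon} for $\gzT$ holds not merely for $\ge\geq\genir$ but for \emph{every} $\ge>0$, and factors as
$$\gzT(\ge,s)=\ge^{d-s}\gzL(s)\,f(s),\qquad f(s):=\sum_{k=0}^{d-1}\frac{\genir^{s-k}\crv_k(\gen)(d-k)}{(s-k)(d-s)}.$$
In particular, the $\ge$-dependence of $\gzT$ decouples into the single prefactor $\ge^{d-s}$, and $f$ is a fixed rational function whose only (simple) poles lie in $\{0,1,\ldots,d\}$.

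Next, I would inspect the Mellin-type contour-shift argument deferred to Section~\ref{sec:the-proof-sprays}. That argument expresses $V(\tiling,\ge)$ as a sum of residues of $\gzT(\ge,s)$ over the visible poles in a window $W$, plus a screen integral controlled by the languidity bounds \textbf{\ref{eqn:L1}}--\textbf{\ref{eqn:L2}}. The hypothesis $S(0)<0$ is imposed in the main theorem precisely to force $W\supseteq\{0,1,\ldots,d\}$: in the general (non-monophase) setting, the $\ge$-dependent entire contributions coming from the truncated sums $\sum_{j\leq J(\ge)}\scale_j^{s}\crv_k(\gen,\scale_j^{-1}\ge)$ must be absorbed into residues at the integer dimensions, and so these integers cannot be excluded from $W$. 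In the monophase case, no such truncation ever appears; the meromorphic structure of $\gzT(\ge,\cdot)$ is uniformly that of $\ge^{d-s}\gzL(s)f(s)$ throughout $\bC$, so one may freely shift the contour across (or past) any subset of $\{0,1,\ldots,d\}$, subject only to the screen avoiding these points.

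The main technical point to verify (and the only place where one needs to be careful) is that the vertical estimates \textbf{\ref{eqn:L1}}--\textbf{\ref{eqn:L2}} on $\gzL$, after being transferred through the fixed rational factor $f$, still produce the stated error bound $\R(\ge)=O(\ge^{d-\sup S})$. This is straightforward once one notes that $f$ has polynomial growth of order $0$ along any vertical line at bounded distance from $\{0,1,\ldots,d\}$, so multiplication by $\ge^{d-s}f(s)$ does not worsen the languidity order $\languidOrder<1$, and the bound on the screen integral goes through word for word. The additive term $\gl_d(\gen)\gzL(d)$ arises exactly as in the main proof, from the explicit bookkeeping at $s=d$, and is independent of the choice of screen. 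Combining these observations, the proof of the first part of Theorem~\ref{thm:pointwise-tube-formula} applies verbatim to deliver formula \eqref{eqn:pointwise-tube-formula} under the weaker hypothesis that $S$ merely avoids $\{0,1,\ldots,d\}$, with $\R$ given by the same expression \eqref{eqn:pointwise-error2} (or \eqref{eqn:pointwise-error}).
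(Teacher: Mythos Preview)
Your proposal is correct and follows essentially the same approach as the paper. The paper phrases the argument in terms of its head/tail decomposition $\gzT=\gzT[\head]+\gzT[\tail]$: in the monophase case $f_k\equiv 0$, so $\gzT[\head]\equiv 0$ and $V_{\mathrm{head}}\equiv 0$, whence $\gzT=\gzT[\tail]$ reduces to precisely your factored form $\ge^{d-s}\gzL(s)f(s)$, and the tail argument (Sections~\ref{subsec:proof-tail}--\ref{subsec:proof-error-term}) never uses $S(0)<0$ except to keep the screen off the integers --- exactly the observation you isolate.
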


\begin{remark}\label{rem:error-term}
The error term $\R$ in formula \eqref{eqn:pointwise-tube-formula} in Theorem~\ref{thm:pointwise-tube-formula} is explicitly given by
  \linenopax
  \begin{equation} \label{eqn:pointwise-error}
    \R(\ge)
    = \frac{1}{2\gp\ii} \int_S \frac{\ge^{d-s}\gzL(s)}{d-s} \left(\sum_{k=0}^{d-1} \frac{\genir^{s-k}}{s-k} (d-k)\crv_k(\gen)\right)
      \,ds.
  \end{equation}
  The integrand in formula \eqref{eqn:pointwise-error} will be called the \emph{tail zeta function} of $\sT$ and denoted by $\gzT[\tail](\ge,s)$ in Section~\ref{sec:the-proof}; see, in particular, Section~\ref{sec:splitting} and equation \eqref{eqn:volume-zeta-split2}.  The function $\gzT[\tail](\ge,s)$ is one part of the head-tail splitting of the tubular zeta function $\gzT(\ge,s)$ employed in the proof of Theorem~\ref{thm:pointwise-tube-formula}.
 In the situation of Corollary~\ref{thm:fractal-spray-tube-formula-monophase}, one has  $\gzT[\tail]=\gzT$ and thus the error term $\R$ is equivalently given by 
 \begin{equation} \label{eqn:pointwise-error2}
    \R(\ge)
    = \frac{1}{2\gp\ii} \int_S \gzT(\ge,s)
      \,ds.
  \end{equation}
  See Section~\ref{sec:consistency} below for a discussion of the consistency of the error term with earlier results. 
\end{remark}

\begin{remark}\label{rem:0-in-W}
  For investigating delicate questions concerning the Minkowski measurability of fractal sprays and self-similar tilings (see, for example, \cite[Corollary~8.5]{TFCD}), it is important to be able to drop the assumption that $S(0)<0$, as in Corollary~\ref{thm:fractal-spray-tube-formula-monophase}. However, this generalization poses technical challenges for the case of more general (i.e. non-monophase) generators. In  the monophase case, in contrast, our tube formulas enable us to derive results on the Minkowski measurability of fractal sprays. For example, for a self-similar tiling $\sT$ (as discussed in Section~\ref{sec:Self-similar-tilings} below), let us denote
  \linenopax
  \begin{align}\label{eqn:Md(G)}
    \gG_s(\gen)
    := \sum_{k=0}^d \frac{\genir^{d-s}}{s-k} (d-k) \crv_k(\gen).
  \end{align}  
Assume that $\gG_\abscissa(\gen)\neq 0$ and (in the lattice case) that $\gG_{\abscissa+\ii m \per}(\gen) \neq 0$ for some $m \in \bZ\less\{0\}$.%
\footnote{See Remark~\ref{rem:Minkowski-warmup} for a discussion of terminology and notation in the lattice and nonlattice cases.} 
If $d-1 < \abscissa < d$, and the self-similar tiling \tiling has a single monophase generator, then one can apply the methods of proof (and the conclusions) of  Theorems~8.23, 8.30, and 8.36 (along with Theorems~2.17, 3.6, 3.25 and 5.17) of \cite{FGCD} to see that \tiling is Minkowski measurable if and only if \gzL is nonlattice. In addition, \tiling has Minkowski dimension \abscissa. In the nonlattice case, \tiling has (positive and finite) Minkowski content
  \linenopax
  \begin{align}\label{eqn:Minko-contento}
    \res[\abscissa]{\gzL(s)} \frac{\gG_\abscissa(\gen)}{d-\abscissa},
  \end{align}
  with residues $\res[\abscissa]{\gzL(s)}$ given by \eqref{eqn:nonlattice-residues}. In the lattice case, \tiling has \emph{average} Minkowski content given by \eqref{eqn:Minko-contento}, with residues $\res[\abscissa]{\gzL(s)}$ as in \eqref{eqn:lattice-residues}.

With definitions suitably adapted from \cite[Chapter~8]{FGCD}, an entirely analogous statement can be made about a self-similar set \attr. More precisely, if the tiling is also assumed to satisfy the compatibility condition \eqref{eqn:compatibility-condition}, then analogous results extend to the associated self-similar fractal \attr. This strengthens and specifies the results of \cite[Corollary~8.5]{TFCD}; see also \cite[Remark~10.6]{TFCD}. Further discussion of this issue is lengthy and beyond the scope of the present paper. For the interested reader, details on the monophase case can be found in \cite{MMappendix}; see also Section~\ref{rem:0-in-W-redux}. For more general generators, these results are under further development in \cite{MMFS}.
\end{remark}

\begin{remark}\label{rem:languidOrder}
  In light of Definition~\ref{def:languid} and Definition~\ref{def:strongly-languid}, note that if \gzL is strongly languid of order \languidOrder, then it is also strongly languid (and hence languid) of any higher order, but not necessarily of any lower order. Consequently, the assumptions of the second part of Theorem~\ref{thm:pointwise-tube-formula} (the strongly languid case) do not imply those of the first part (the languid case). Compare to \cite[Remark~8.8]{FGCD}.
\end{remark}

\begin{remark}\label{rem:residues-are-subtractive}
  Define $T := \bigcup_{j=1}^\iy \gen^j$ and let $T_{-\ge}$ be as defined in \eqref{eqn:def:inner-parallel-set}.
  Note that homogeneity of Lebesgue measure gives
  \linenopax
  \begin{align*}
    \gl_d(\gen)\gzL(d)
    = \sum_{j=1}^\iy \scale_j^d \gl_d(\gen)
    = \sum_{j=1}^\iy \gl_d(\gen^j),
  \end{align*}
  and hence the tube formula \eqref{eqn:pointwise-tube-formula} expresses the fact that the measure of the complement of $T_{-\ge}$ in $T$ is given by
  \linenopax
  \begin{equation} \label{eqn:pointwise-tube-formula-recalled}
    \gl_d(T\setminus T_{-\ge})
    = - \sum_{\pole\in \sD_{\tiling}(W)} \res[s=\pole]{\gzT(\ge,s)} \q\big(-\R(\ge)\big).   
  \end{equation}
\end{remark}


\section{Pointwise tube formulas for self-similar tilings}
\label{sec:Self-similar-tilings}

In \cite{SST,GeometryOfSST,TFCD}, the focus is on self-similar tilings. Such an object is a fractal spray associated to an iterated function system $\simtset$, $N \geq 2$, where each $\simt_n$ is a contractive similarity mapping of \bRd with scaling ratio $r_n \in (0,1)$. For $A \ci \bRd$, we write $\simt(A) := \bigcup_{n=1}^N \simt_n(A)$. The \emph{self-similar set} $\attr$ (generated by the self-similar system $\{\simt_1,\ldots,\simt_N\}$) is the unique (compact and nonempty) solution of the fixed-point equation $\attr = \simt(\attr)$\,; cf.~\cite{Hut}. The fractal \attr is also called the \emph{attractor} of the self-similar system $\{\simt_1,\ldots,\simt_N\}$.
%
To proceed with the construction of a self-similar tiling, the system must satisfy the \emph{open set condition} and a \emph{nontriviality condition}: 

  A self-similar system $\simtset$ (or its attractor \attr) satisfies the \emph{open set condition} (OSC) if and only if there is a nonempty open set $O \ci \bRd$ such that
  \linenopax
  \begin{align}
    \simt_n(O) &\ci O, \q n=1, 2,\dots, N \label{eqn:def:OSC-containment} \\
    \simt_n(O) &\cap \simt_m(O) = \emptyset \text{ for } n \neq m.
      \label{eqn:def:OSC-disjoint}
  \end{align}
  In this case, $O$ is called a \emph{feasible open set} for $\simtset$ (or \attr), cf.~\cite{Hut, Falconer, BandtNguyenRao}.

  A self-similar set \attr satisfying OSC is said to be \emph{nontrivial}, if there exists a feasible open set $O$ such that
  \begin{equation}\label{eqn:nontriv}
    O\not \ci \simt(\cj{O})\,,
  \end{equation}
  where $\cj{O}$ denotes the closure of $O$; otherwise, \attr is called \emph{trivial}. This condition is needed to ensure that the set
  $O\setminus\simt(\cj{O})$ in Definition~\ref{def:self-similar-tiling} is nonempty.  It turns out that nontriviality is independent of the particular choice of the set $O$. It is shown in \cite{GeometryOfSST} that \attr is trivial if and only if it has interior points, which amounts to the following characterization of nontriviality:
\begin{prop}[{\cite[Corollary~5.4]{GeometryOfSST}}]
  \label{cor:OSC-dimension-d-implies-trivial}
  Let $\attr \ci \bRd$ be a self-similar set satisfying OSC. Then \attr is nontrivial if and only if \attr has Minkowski dimension (or equivalently, Hausdorff dimension) strictly less than $d$. 
\end{prop}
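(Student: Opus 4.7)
The plan is to translate the condition $\dim_M(\attr) < d$ into the inequality $\sum_{n=1}^N r_n^d < 1$ via the Moran/Hutchinson theorem (recall that under OSC, $\dim_H(\attr) = \dim_M(\attr) = D$, where $D$ is the similarity dimension, defined as the unique $D\in(0,d]$ solving $\sum_{n=1}^N r_n^D = 1$), and then compare $O$ with $\simt(\cj O)$ via a Lebesgue measure computation, exploiting the crucial fact that $O \setminus \simt(\cj O)$ is an \emph{open} set. Throughout, I would assume without loss of generality that the feasible open set $O$ under consideration is bounded (this is standard for OSC; any feasible $O$ can be replaced by a bounded one). Two standard tools I would cite: (i) $\simt(\cj O)$ is closed, being a finite union of the closed sets $\simt_n(\cj O) = \cj{\simt_n(O)}$; (ii) Hutchinson's theorem that $\attr$ is the \emph{unique} non-empty compact set with $X = \simt(X)$.

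For the direction $\dim_M(\attr) < d \Rightarrow$ nontriviality, pick any feasible $O$ and argue by contradiction: suppose $O \subset \simt(\cj O)$. Taking closures (which is legal since $\simt(\cj O)$ is closed) gives $\cj O \subset \simt(\cj O)$. The reverse inclusion $\simt(\cj O) \subset \cj O$ follows from OSC, since $\simt_n(\cj O) = \cj{\simt_n(O)} \subset \cj O$. Hence $\cj O = \simt(\cj O)$, so by Hutchinson's uniqueness, $\cj O = \attr$. But then $\mathcal{L}^d(\attr) \geq \mathcal{L}^d(O) > 0$, contradicting $\mathcal{L}^d(\attr) = 0$ (which holds because $\dim_H(\attr) \leq \dim_M(\attr) < d$).

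For the converse direction, I would proceed by contraposition: assume $\dim_M(\attr) = d$, so by Moran's equation $\sum_{n=1}^N r_n^d = 1$. For an arbitrary feasible $O$, OSC ensures the sets $\simt_n(O) \subset O$ are pairwise disjoint, so
\[
  \sum_{n=1}^N \gl_d(\simt_n(O)) \;=\; \Bigl(\sum_{n=1}^N r_n^d\Bigr)\gl_d(O) \;=\; \gl_d(O),
\]
which (using boundedness of $O$) gives $\gl_d\bigl(O \setminus \bigcup_{n=1}^N \simt_n(O)\bigr) = 0$, and \emph{a fortiori} $\gl_d(O \setminus \simt(\cj O)) = 0$. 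Since $O \setminus \simt(\cj O)$ is open, it must be empty, so $O \subset \simt(\cj O)$. As this holds for every feasible $O$, the attractor $\attr$ is trivial, completing the contrapositive.

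The main obstacle, more conceptual than technical, is the passage from a Lebesgue measure statement to a set-theoretic containment: both directions hinge on the observation that $O \setminus \simt(\cj O)$ is open, so that vanishing measure forces emptiness. Once this is isolated, the rest is a matter of combining Moran's formula with the uniqueness of the attractor.
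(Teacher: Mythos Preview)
The paper does not prove this proposition; it is imported from \cite[Corollary~5.4]{GeometryOfSST}. The sentence preceding the statement indicates the route taken there: one first shows that $\attr$ is trivial if and only if $\attr$ has interior points, and the proposition then follows from the standard fact that, under OSC, a self-similar set in $\bRd$ has interior points iff its similarity dimension equals~$d$.

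Your argument is genuinely different and more self-contained: you bypass the interior-points characterization entirely and work directly from Moran's equation and a Lebesgue-measure comparison. The first direction is correct as written. In the second direction your measure computation is correct for every feasible $O$ of finite measure, but the blanket ``without loss of generality $O$ is bounded'' is slightly glib here: triviality is a statement about \emph{every} feasible $O$, and unbounded feasible sets of infinite measure can exist (for instance, for $\simt_1(x,y)=(x/2,y/2)$, $\simt_2(x,y)=(x/2+\tfrac12,y/2)$ on $\bR^2$, the strip $(0,1)\times\bR$ is feasible), in which case the cancellation $\gl_d(O)-\sum_n r_n^d\,\gl_d(O)=0$ degenerates to $\infty-\infty$. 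This is a technicality rather than a flaw in the underlying idea, and it is in line with the convention of working with bounded feasible sets that the cited reference adopts. Modulo this point, your route is arguably more elementary than the interior-points approach, since it avoids invoking Schief-type results that pass from positive Lebesgue measure to nonempty interior.
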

All the self-similar sets \attr considered in this paper will be assumed to be nontrivial, and the discussion of a self-similar tiling \tiling implicitly assumes that the corresponding attractor \attr is nontrivial (and satisfies OSC).

Denote the set of all finite \emph{words} formed by the alphabet $\{1,\dots,N\}$ by
\linenopax
\begin{equation}\label{eqn:def:words}
  \Words := \bigcup_{k=0}^\iy \{1,\dots,N\}^k\,.
\end{equation}
For any word $w=w_1 w_2\dots w_n \in \Words$, let $r_w := r_{w_1}\cdot\ldots\cdot r_{w_n}$ and $\simt_w := \simt_{w_1} \circ \dots \circ \simt_{w_n}$. In particular, if $w \in \Words $ is the \emph{empty word}, then $r_w=1$ and $\simt_w=\mathrm{Id}$.

\begin{defn}\label{def:self-similar-tiling}(Self-similar tiling) 
  Let $O$ be a feasible open set for $\simtset$. Denote the connected components of the open set $O \setminus \simt(\bar O)$ by $G_q, q \in Q$. Then the \emph{self-similar tiling} \tiling associated with the self-similar system $\{\simt_1,\ldots,\simt_N\}$ and $O$ is the set
  \linenopax
  \begin{equation}\label{eqn:def:self-similar-tiling}
    \tiling(O) := \{ \simt_w(G_q) \suth w \in \Words, q \in Q\}.
  \end{equation}
\end{defn}  
We order the words $w^{(1)}, w^{(2)}, \ldots$ of \Words in such a way that the sequence $\{\scale_j\}_{j = 1}^\iy$ given by $\scale_j := r_{w^{(j)}}$, $j=1,2,\ldots$, is nonincreasing.
  It is clear that a self-similar tiling is thus a collection of fractal sprays, each with fractal string $\sL = \{\scale_j\}_{j=1}^\iy$ and a generator $\gen_q$, $q \in Q$. In this context, the mapping $\simm_j$ appearing in Definition~\ref{def:fractal-spray} corresponds to $\simt_{w^{(j)}}$. 
  
The terminology ``self-similar tiling'' comes from the fact (proved in \cite[Theorem~5.7]{GeometryOfSST}) that $\tiling(O)$ is an \emph{open tiling} of $O$ in the following sense: The \emph{tiles} $\simt_w(G_g)$ in $\tiling(O)$ are pairwise disjoint open sets and the closure of their union is the closure of $O$, that is,
\linenopax 
\[\cj{O}= \cj{\bigcup\nolimits_{q \in Q} \bigcup\nolimits_{w \in \Words} \simt_w(G_q)}\,.\]

\begin{remark}[Tube formulas for self-similar \emph{sets}]
  \label{rem:compatibility-theorem}
  In \cite[Theorem~6.2]{GeometryOfSST}, precise conditions are given for when the tube formula of a self-similar tiling can be used to obtain the tube formula for the corresponding self-similar set, the attractor \attr; recall from \eqref{eqn:ext-nbd-of-A} that for a bounded set $A \ci \bRd$, we define $A_\ge := \{x \in \bRd \less A \suth \dist(x,A) \leq \ge\}$. 
  
  Let $\attr\ci\bR^d$ be a self-similar set satisfying OSC with some feasible open set $O$ and $\dim_M \attr < d$ (i.e., \attr is nontrivial). Let $\tiling(O)$ be the associated tiling of $O$, and let $K:=\cj{O}$ and $T:=\bigcup_{w\in \Words, q\in Q} \simt_w(G_q)$. Then \cite[Theorem~6.2]{GeometryOfSST} states that one has a disjoint decomposition
  \linenopax
  \begin{align}\label{eqn:disjoint-decomposition}     
    \attr_\ge = T_{-\ge} \cup K_\ge,
    \qq\text{for all }\; \ge \geq 0,
  \end{align}
  if and only if the following compatibility condition is satisfied:
  \linenopax
  \begin{align}\label{eqn:compatibility-condition}     
    \bd K \ci F.
  \end{align}
  In this case, the tube formula for the self-similar set \attr can be obtained simply by adding to $V(\tiling,\ge)$ the (outer) tube formula $\gl_d(K_\ge)$ as in \eqref{eq:Steiner-formula-ext} (although note that in the present context, $K$ need not be convex). For example, the Sierpinski gasket and the Sierpinski carpet tilings (see Figures~\ref{fig:sierpinski-gasket} and \ref{fig:sierpinski-carpet}) satisfy the compatibility condition \eqref{eqn:compatibility-condition}, whereas the Koch curve and the pentagasket tilings do not (see Figures~\ref{fig:koch-tiling} and \ref{fig:pentagasket}). Condition \eqref{eqn:compatibility-condition} will not be assumed in the remainder of the paper.
\end{remark}  

From now on, let $\tiling = \tiling(O)$ be a self-similar tiling associated with the self-similar system $\{\simt_n\}_{n=1}^N$ and the generator \gen. We refer to the fractal \attr as the self-similar set associated to \tiling. 
  For the same reasons as described in the first paragraph of Section~\ref{sec:pw-tube-formula}, we lose no generality by stating all results for self-similar tilings with one generator, which we will denote by $G$ in the sequel. (For natural examples of a self-similar tiling with multiple generators, see the pentagasket depicted in Figure~\ref{fig:pentagasket} of the examples section, and also Example~\ref{exm:U-shaped}, which is depicted in Figure~\ref{fig:u-shaped}.)
  Without loss of generality, we may also assume that the scaling ratios $\{r_n\}_{n=1}^N$ of $\{\simt_n\}_{n=1}^N$ are indexed in descending order, so that
  \linenopax
  \begin{align}\label{eqn:scaling-ratios-ordered}
    0 < r_N \leq \dots \leq r_2 \leq r_1 < 1.
  \end{align}

It follows from \cite[Theorem~2.9]{FGCD} (see also \cite[Theorem~4.7]{TFCD}) that \gzL has a meromorphic extension to all of \bC given by 
  \linenopax
  \begin{align}\label{eqn:gzL-extended-to-C}
    \gzL(s) = \frac1{1 - \sum_{n=1}^N r_n^s},
    \qq s \in \bC,
  \end{align}
and hence that the set $\sD_\sL$ of scaling complex dimensions of \tiling consists precisely of the roots $s\in\bC$ of the equation
  \linenopax
  \begin{align}\label{eqn:Moran}
    \sum_{n=1}^N r_n^s = 1.
  \end{align}
It is known from \cite[Theorem~3.6]{FGCD} that the set $\sD_\sL$ lies in a bounded vertical strip: there exists a real number $\abscissa_l \in (-\iy, \abscissa)$ such that
  \linenopax
  \begin{align}\label{eqn:pole-strip}
     \abscissa_l \leq \Re s \leq \abscissa, \qq \text{ for all }\; s \in \sD_\sL.  
  \end{align}
  For the remainder of this paper, we let
  \linenopax
  \begin{align}\label{eqn:dimns-of-C}
    \DT = \DT(\bC) = \sD_\sL \cup \{0,1,\dots,d\}.
  \end{align}

\begin{remark}[Various incarnations of \abscissa]
  \label{rem:incarnations-of-D}
  Recall that \abscissa denotes the abscissa of convergence of \gzL. It follows from \cite[Theorem~3.6]{FGCD} that $\abscissa = \abscissa_\sL$ is a simple pole of \gzL and that $D$ is the only pole of \gzL (i.e., the only scaling complex dimension of \tiling) which lies on the positive real axis. Furthermore, it coincides with the unique real solution of \eqref{eqn:Moran}, often called the \emph{similarity dimension} of \attr and denoted by \gd. Since \attr satisfies OSC, $D$ also coincides with the Minkowski and Hausdorff dimension of \attr, denoted by $D_\attr$ and $H_\attr$, respectively. (For this last statement, see \cite{Hut}, as described in \cite[Theorem~9.3]{Falconer}.) 
  Moreover, it is clear that $\abscissa>0$ since $N \geq 2$, and that $\abscissa \leq d$; in fact, Proposition~\ref{cor:OSC-dimension-d-implies-trivial} implies $\abscissa < d$. In summary, we have
  \linenopax
  \begin{align}\label{eqn:D-dimensions}
    0 < \abscissa < d
    \qq\text{and}\qq
    \abscissa = \gd = \abscissa_\attr = H_\attr.
  \end{align}
\end{remark}

The following result is an immediate consequence of \cite[Theorem~3.6]{FGCD}, which provides the structure of the complex dimensions of self-similar fractal strings (even for the case when \abscissa may be larger than 1). 

\begin{prop}[Lattice/nonlattice dichotomy, see {\cite[Section~4.3]{TFCD}}]
  \label{thm:lattice-nonlattice-dichotomy}
  \hfill \\
  \hstr[3] \emph{Lattice case}. When the logarithms of the scaling ratios $r_n$ are each an integer multiple of some common positive real number, the scaling complex dimensions lie periodically on finitely many vertical lines, including the line $\Re s = \abscissa$. In this case, there are infinitely many complex dimensions with real part \abscissa.
  \hfill \\
  \hstr[3] \emph{Nonlattice case}. Otherwise, the scaling complex dimensions are quasiperiodically distributed (as described in \cite[Chapter~3]{FGCD}) and $s = \abscissa$ is the only complex dimension with real part \abscissa. However, there exists an infinite sequence of simple scaling complex dimensions approaching the line $\Re s = \abscissa$ from the left.
  In this case (cf.~\cite[Section~3.7.1]{FGCD}), the set $\{\Re s \suth s \in \D\}$ appears to be dense in finitely many compact subintervals of $[\abscissa_l,\abscissa]$, where $D_l$ is as in \eqref{eqn:pole-strip}.
\end{prop}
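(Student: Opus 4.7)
The plan is to derive the proposition as a direct consequence of \cite[Theorem~3.6]{FGCD} applied to the specific form of the scaling zeta function $\gzL$ obtained in \eqref{eqn:gzL-extended-to-C}. First I would observe that, by \eqref{eqn:gzL-extended-to-C}, the set $\sD_\sL$ of scaling complex dimensions of $\sT$ is exactly the zero set in $\bC$ of the entire function $f(s) := 1 - \sum_{n=1}^N r_n^s$, i.e., the set of solutions of the Moran equation \eqref{eqn:Moran}. Thus, the structural properties of $\sD_\sL$ are governed purely by the distribution of zeros of $f$, which is a Dirichlet polynomial in the variable $s$ with real positive coefficients $r_n \in (0,1)$. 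This places $\gzL$ precisely in the framework of self-similar fractal strings studied in \cite[Chapter~3]{FGCD}, to which the cited Theorem~3.6 directly applies.

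For the \emph{lattice case}, the plan is to exploit the commensurability of the $\log r_n$: writing $\log r_n = -k_n \log(1/r)$ for some common $r \in (0,1)$ and positive integers $k_n$, the substitution $z = r^s$ converts the Moran equation into the polynomial equation $\sum_{n=1}^N z^{k_n} = 1$. This polynomial has only finitely many roots $z_1,\ldots, z_m$ in $\bC$, and each root $z_j$ contributes a periodic family of complex dimensions $s = -\log z_j / \log(1/r) + \tfrac{2\pi \ii k}{\log(1/r)}$, $k \in \bZ$. These families lie on finitely many vertical lines with common vertical period $\per = 2\pi/\log(1/r)$; and the unique positive real root $z_j = r^\abscissa$ (coming from Remark~\ref{rem:incarnations-of-D}) yields the critical line $\Re s = \abscissa$, giving infinitely many complex dimensions with real part $D$.

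For the \emph{nonlattice case}, which is the harder direction, I would again invoke \cite[Theorem~3.6]{FGCD} directly: when the $\log r_n$ are incommensurable, the quasiperiodic distribution of the zeros of $f$ is established in \cite[Chapter~3]{FGCD} by means of Diophantine approximation, approximating the transcendental Moran equation by a sequence of lattice equations arising from simultaneous rational approximations of the ratios $\log r_n/\log r_1$. The simplicity and uniqueness of $s=\abscissa$ as the only zero on $\Re s = \abscissa$ follow from the strict convexity argument for the real function $t \mapsto \sum r_n^t$, while the existence of an infinite sequence of simple scaling complex dimensions approaching $\Re s = \abscissa$ from the left is exactly the quasiperiodic content of that theorem. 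The density statement for $\{\Re s : s \in \sD_\sL\}$ in finitely many subintervals of $[\abscissa_l,\abscissa]$ is then cited from \cite[Section~3.7.1]{FGCD}.

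The main obstacle in a self-contained treatment would be the nonlattice analysis, which relies on the delicate quasiperiodic/Diophantine machinery of \cite[Chapter~3]{FGCD}; however, since the proposition is explicitly stated as an immediate corollary of that theorem, the proof reduces to verifying the applicability of \cite[Theorem~3.6]{FGCD} to $\gzL$, which is immediate from the explicit expression \eqref{eqn:gzL-extended-to-C} and the ordering \eqref{eqn:scaling-ratios-ordered} of the scaling ratios.
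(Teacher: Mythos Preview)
Your proposal is correct and matches the paper's approach exactly: the paper gives no separate proof but simply states (just before the proposition) that it is ``an immediate consequence of \cite[Theorem~3.6]{FGCD}''. Your write-up is in fact more detailed than the paper's, since you spell out the lattice-case substitution $z=r^s$ and the reduction to a polynomial equation, whereas the paper leaves all of this implicit in the citation.
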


\begin{remark}\label{rem:Minkowski-warmup}
  It follows from \cite[Theorem~3.6]{FGCD} that in the lattice case (i.e., when $r_n = r^{k_n}$, $n=1,\ldots,N$, for some $0<r<1$ and positive integers $\{k_n\}_{n=1}^N$), the scaling complex dimensions have the same multiplicity and a Laurent expansion with the same principal part on each vertical line along which they appear. In particular, since $\abscissa$ is simple (see Remark~\ref{rem:incarnations-of-D}), all the scaling complex dimensions $\{\abscissa+\ii m\per\}_{m \in \bZ}$ (where $\per = 2\gp/\log r^{-1}$) along the vertical line $\Re s = \abscissa$ are simple and have residue equal to 
  \linenopax
  \begin{align}\label{eqn:lattice-residues}
    \res[\abscissa]{\gzL(s)}
    = \frac{1}{\log r^{-1} \sum_{n=1}^N k_n r^{k_n \abscissa}}.
  \end{align}
  In the nonlattice case, $\abscissa$ is simple with residue
  \linenopax
  \begin{align}\label{eqn:nonlattice-residues}
    \res[\abscissa]{\gzL(s)}
    = \frac{1}{\sum_{n=1}^N r_n^{\abscissa} \log r_n^{-1}}.
  \end{align}
  Note that \eqref{eqn:nonlattice-residues} is also valid in the lattice case. Proposition~\ref{thm:lattice-nonlattice-dichotomy} and the contents of this remark are used when applying Theorem~\ref{thm:ptwise-result-self-similar-case} and Corollary~\ref{thm:ptwise-result-self-similar-case-simplified} to the examples in Section~\ref{sec:Examples}.
\end{remark}

\subsection{Exact pointwise tube formulas}
\label{sec:Exact-pointwise-tube-formulas}

The following result is a consequence of the strongly languid case of Theorem~\ref{thm:pointwise-tube-formula} when applied to self-similar tilings.

\begin{theorem}[Exact pointwise tube formula for self-similar tilings]
  \label{thm:ptwise-result-self-similar-case}
  Assume that \tiling is a self-similar tiling with generator $\gen \ci \bRd$, and that a Steiner-like representation has been chosen for \gen as in \eqref{eqn:def-prelim-Steiner-like-formula}. 
  Then for all $\ge \in (0, \genir)$,
  \linenopax
  \begin{equation} \label{eqn:self-similar-pointwise-tube-formula}
    V(\tiling,\ge)
    = \sum_{\pole \in \sD_{\tiling}} \res[s=\pole]{\gzT(\ge,s)} + \gl_d(\gen)\gzL(d).
  \end{equation}
  \begin{proof}
    The open set condition and nontriviality condition ensure that $\abscissa < d$. It remains to show that \gzL is strongly languid of some order $\languidOrder<2$ with constant $A = r_N$. Indeed, in view of \eqref{eqn:scaling-ratios-ordered} and \eqref{eqn:gzL-extended-to-C}, \gzL is strongly languid of order $\languidOrder = 0 < 2$ with constants $A = r_N$ and $C=1>0$, as in  Definition~\ref{def:strongly-languid}: 
    \linenopax
    \begin{align}\label{eqn:languid-of-order-0}
      |\gzL(s) |
      = \left| \frac1{1-\sum_{n=1}^N r_n^s}\right|
      \leq \left(r_N^{-1}\right)^{-|\Re s|},
      \qq \text{ as } \Re s \to -\iy.
    \end{align}
    Clearly, it follows from \eqref{eqn:pole-strip} that the sequence of screens $\{S_m\}_{m=1}^\iy$ in Definition~\ref{def:strongly-languid} may be chosen to be a sequence of vertical lines lying strictly to the left of $\min\{D_\ell,0\}$ and tending to $-\iy$. In particular, this ensures $\sup S_m < 0$ for all $m=1,2,\dots$. 
    Applying the second part of Theorem~\ref{thm:pointwise-tube-formula} with $A = r_N$, we deduce that the tube formula for \tiling has no error term and is given by \eqref{eqn:self-similar-pointwise-tube-formula} for all $0 < \ge < \min\{\genir , r_N^{-1}\genir\} = \genir$. (Note that since $r_N < 1$, we have $r_N^{-1} \genir > \genir$.)
  \end{proof}
\end{theorem}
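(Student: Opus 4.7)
The plan is to deduce this theorem as an immediate consequence of the strongly languid half of Theorem~\ref{thm:pointwise-tube-formula}. To invoke that result, I must verify three ingredients: (a) the abscissa of convergence satisfies $D<d$; (b) $\zeta_\mathcal{L}$ is strongly languid of some order $\kappa<2$, for some explicit constant $A>0$; and (c) the sequence of screens $\{S_m\}$ in Definition~\ref{def:strongly-languid} can be chosen with $S_m(0)<0$ for every $m$, so that the integers $\{0,1,\dots,d\}$ and in fact all of $\D$ lie inside each window. Given these, Theorem~\ref{thm:pointwise-tube-formula} produces the exact formula \eqref{eqn:self-similar-pointwise-tube-formula} with $W=\bC$ on the range $0<\ge<\min\{\genir,A^{-1}\genir\}$, and the stated range $(0,\genir)$ will be recovered as soon as I can take $A=r_N<1$ (since then $A^{-1}>1$).

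Item (a) is the easiest: the OSC together with nontriviality of $F$ forces $\dim_M F<d$ by Proposition~\ref{cor:OSC-dimension-d-implies-trivial}, and Remark~\ref{rem:incarnations-of-D} identifies $D$ with $\dim_M F$.

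For (b), the key is the explicit meromorphic extension \eqref{eqn:gzL-extended-to-C}, namely $\zeta_\mathcal{L}(s)=\bigl(1-\sum_{n=1}^N r_n^s\bigr)^{-1}$. Because the ratios are ordered so that $0<r_N\le\cdots\le r_1<1$, for $\sigma=\Re s<0$ the function $r\mapsto r^\sigma$ is decreasing on $(0,1)$, so $|r_N^s|=r_N^{\sigma}$ dominates $|r_n^s|=r_n^\sigma$ for every $n<N$ and tends to $+\infty$ as $\sigma\to-\infty$. A routine dominant-term estimate then yields, uniformly in $\Im s$,
\[
\Bigl|1-\sum_{n=1}^N r_n^s\Bigr|\;\gtrsim\; r_N^{\Re s}\qquad\text{as }\Re s\to-\infty,
\]
so that $|\zeta_\mathcal{L}(s)|\le c\cdot r_N^{-\Re s}=c\cdot A^{|\Re s|}$ with $A=r_N$ and $\kappa=0<2$. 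This is precisely \textbf{L2$'$}. Condition \textbf{L1} along horizontal cuts $\Im s=T_n$ follows from the same dominant-term argument, provided the $T_n$ are chosen to stay a bounded distance away from the discretely distributed poles of $\zeta_\mathcal{L}$.

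For (c), the strip bound \eqref{eqn:pole-strip} confines every scaling complex dimension to $D_\ell\le \Re s\le D$, so I would simply let each $S_m$ be the vertical line $\Re s=c_m$, with $c_m<\min\{0,D_\ell\}$ and $c_m\downarrow-\infty$. This screen has Lipschitz constant $0$, avoids every pole of $\zeta_\mathcal{L}$, and satisfies $\sup S_m=c_m<0$ uniformly. Plugging these data, together with $A=r_N$, into the strongly languid part of Theorem~\ref{thm:pointwise-tube-formula} then gives \eqref{eqn:self-similar-pointwise-tube-formula} without error term on $0<\ge<\min\{\genir,r_N^{-1}\genir\}=\genir$, since $r_N<1$ implies $r_N^{-1}\genir>\genir$. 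The only genuinely delicate step in this outline is the uniform lower bound for $|1-\sum r_n^s|$ in the far-left half-plane, where one must handle the (infinitely many) zeros of the denominator --- but since those zeros lie in the finite strip \eqref{eqn:pole-strip}, far to the right of the screens $S_m$, no difficulty actually arises there.
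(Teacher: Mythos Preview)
Your proposal is correct and follows essentially the same route as the paper: verify $D<d$ via OSC plus nontriviality, establish strong languidity of order $\kappa=0$ with $A=r_N$ from the explicit form \eqref{eqn:gzL-extended-to-C}, take the screens $S_m$ to be vertical lines to the left of $\min\{D_\ell,0\}$, and then invoke the strongly languid case of Theorem~\ref{thm:pointwise-tube-formula} to conclude for $0<\ge<\min\{\genir,r_N^{-1}\genir\}=\genir$. Your write-up is slightly more explicit about \textbf{L1} and about why the dominant-term bound is uniform away from the pole strip, but the argument is the same.
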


\begin{remark}\label{rem:exact-formula-generalized}
  Theorem~\ref{thm:ptwise-result-self-similar-case} generalizes to higher dimensions the pointwise tube formula for self-similar strings (i.e. 1-dimensional self-similar tilings) obtained in \cite[Section~8.4]{FGCD}. The formula \eqref{eqn:self-similar-pointwise-tube-formula} holds pointwise, as opposed to the corresponding result in \cite[Theorem~8.3]{TFCD}, which was shown to hold only distributionally, also generalizes the tube formula for self-similar tilings (obtained in \cite[Theorem~8.3]{TFCD}) to generators which may not be monophase (or even pluriphase; see Remark~\ref{rem:monophase-and-pluriphase}). 
\end{remark}

Concerning the proof of Theorem~\ref{thm:ptwise-result-self-similar-case}, see also the discussion in \cite[Section~6.4]{FGCD} regarding the self-similar string \smash{$\sL = \{\scale_j\}_{j=1}^\iy$} (this is a \emph{generalized} self-similar fractal string, in the sense of \cite[Chapter~3]{FGCD}). The following more explicit form of Theorem~\ref{thm:ptwise-result-self-similar-case} is used to compute examples in Section~\ref{sec:Examples}.

\begin{cor}[Fractal tube formula]
  \label{thm:ptwise-result-self-similar-case-simplified}
  Assume, in addition to the hypotheses of Theorem~\ref{thm:ptwise-result-self-similar-case}, that the poles of the tubular zeta function \gzT are simple (which implies that $\sD_\sL$ and $\{0,1,\dots,d\}$ are disjoint). Then for all $0 < \ge < \genir $, we have the following exact tube formula:
  \linenopax
  \begin{align} \label{eqn:self-similar-pointwise-tube-formula-simplified}
    V(\tiling,\ge)
    &= \sum_{\pole\in \sD_{\sL}} c_\pole \ge^{d-\pole} 
     + \sum_{k=0}^{d} \left(c_k + e_k(\ge)\right) \ge^{d-k},
  \end{align}
  where
  \linenopax
  \begin{align} 
    c_\pole &:= \frac{\res{\gzL(s)}}{d-\pole}
    \sum_{k=0}^{d-1} \frac{\genir^{\pole-k}(d-k)}{\pole-k} \crv_k(\gen),
      && \text{for }\, \pole \in \sD_\sL, 
      \label{eqn:self-similar-pointwise-tube-formula-coefficients-cw}\\
    c_k &:= \crv_k(\gen)\gzL(k),
      && \text{for }\, k \in \{0,1,\dots,d\},
      \label{eqn:self-similar-pointwise-tube-formula-coefficients-ck} \\
    e_k(\ge) &:= \sum_{j=1}^{J(\ge)} \scale_j^k 
      \left(\crv_k(\gen, \scale_j^{-1}\ge) - \crv_k(\gen)\right),
      && \text{for }\, k \in \{0,1,\dots,d\}, 
      \label{eqn:self-similar-pointwise-tube-formula-coefficients-ek}
  \end{align}
  and $J(\ge) := \max\{j \geq 1 \suth \scale_j^{-1} \ge < \genir\} \vee 0$ as in \eqref{eqn:J(eps)}.
  Alternatively, one has
  \linenopax
  \begin{align} \label{eqn:self-similar-pointwise-tube-formula-condensed}
    V(\tiling,\ge)
    &= \sum_{\pole\in \sD_{\sL}} c_\pole \ge^{d-\pole} 
     + \sum_{k=0}^{d} c_k(\ge)\ge^{d-k},
  \end{align}
  where $c_\pole$ is as in \eqref{eqn:self-similar-pointwise-tube-formula-coefficients-cw} and $c_k(\ge) := c_k + e_k(\ge)$ with $c_k$ and $e_k(\ge)$ as in \eqref{eqn:self-similar-pointwise-tube-formula-coefficients-ck}--\eqref{eqn:self-similar-pointwise-tube-formula-coefficients-ek}, for $k \in \{0,1,\dots,d\}$.
\end{cor}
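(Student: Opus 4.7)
The plan is to start from the exact formula \eqref{eqn:self-similar-pointwise-tube-formula} provided by Theorem~\ref{thm:ptwise-result-self-similar-case} and compute explicitly $\res[s=\pole]{\gzT(\ge,s)}$ at each $\pole\in\sD_\sT$. Under the simplicity hypothesis, the sets $\sD_\sL$ and $\{0,1,\dots,d\}$ must be disjoint (otherwise a simple pole of $\gzL$ at an integer $k$ combined with the factor $1/(s-k)$ in \eqref{eqn:zeta-expanded} would produce a double pole of $\gzT$), and each pole of $\gzL$ in $\sD_\sL$ is itself simple. Hence the sum over $\sD_\sT$ splits cleanly into a sum over $\sD_\sL$ and a sum over $\{0,1,\dots,d\}$, and I would treat these two cases separately using the expanded form \eqref{eqn:zeta-expanded} together with the head–tail decomposition \eqref{eqn:protosplit}.

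For $\pole\in\sD_\sL$, the ``head'' $\sum_{j=1}^{J(\ge)}\scale_j^s\crv_k(\gen,\scale_j^{-1}\ge)$ is entire in $s$, so only the ``tail'' $\crv_k(\gen)\sum_{j>J(\ge)}\scale_j^s$ contributes to the residue at $\pole$, and its residue agrees with that of $\crv_k(\gen)\gzL(s)$. This yields
\begin{align*}
\res[s=\pole]{\gzT(\ge,s)}
= \ge^{d-\pole}\res[s=\pole]{\gzL(s)}\left(\sum_{k=0}^{d}\frac{\genir^{\pole-k}\crv_k(\gen)}{\pole-k} - \frac{\genir^{\pole-d}\gl_d(\gen)}{\pole-d}\right).
\end{align*}
Substituting $\gl_d(\gen)=\sum_{k=0}^{d}\crv_k(\gen)\genir^{d-k}$ via \eqref{eqn:leb-coeff-rel} and combining the two sums over the common denominator $(\pole-k)(\pole-d)$ causes the $k=d$ contribution to vanish and reproduces precisely the coefficient $c_\pole$ of \eqref{eqn:self-similar-pointwise-tube-formula-coefficients-cw}.

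For an integer $k\in\{0,1,\dots,d-1\}$, the disjointness noted above implies that the only term in \eqref{eqn:zeta-expanded} contributing a pole at $s=k$ is the $m=k$ summand. Evaluating its inner sum at $s=k$ via \eqref{eqn:protosplit}, and identifying the tail $\sum_{j>J(\ge)}\scale_j^k$ with $\gzL(k)-\sum_{j\leq J(\ge)}\scale_j^k$ through the meromorphic extension of $\gzL$, one obtains
\begin{align*}
\res[s=k]{\gzT(\ge,s)}
= \ge^{d-k}\left(\crv_k(\gen)\gzL(k) + \sum_{j=1}^{J(\ge)}\scale_j^k\bigl(\crv_k(\gen,\scale_j^{-1}\ge)-\crv_k(\gen)\bigr)\right)
= \ge^{d-k}\bigl(c_k+e_k(\ge)\bigr).
\end{align*}
The residue at $s=d$ gets an analogous contribution from the $m=d$ summand, plus an extra $-\gl_d(\gen)\gzL(d)$ coming from the last term of \eqref{eqn:zeta-expanded}; this extra contribution is exactly cancelled by the $+\gl_d(\gen)\gzL(d)$ sitting outside the residue sum in \eqref{eqn:self-similar-pointwise-tube-formula}. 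Adding all contributions yields \eqref{eqn:self-similar-pointwise-tube-formula-simplified}, and the condensed form \eqref{eqn:self-similar-pointwise-tube-formula-condensed} is immediate from the definition $c_k(\ge):=c_k+e_k(\ge)$.

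The only delicate point is the interpretation of $\sum_{j>J(\ge)}\scale_j^s$ at small integer $s=k\leq\abscissa$, where the Dirichlet series does not converge literally. This is the step where the head–tail decomposition \eqref{eqn:protosplit} really earns its keep: it exhibits $\sum_j\scale_j^s\crv_k(\gen,\scale_j^{-1}\ge)$ as a finite entire sum plus $\crv_k(\gen)$ times the tail of $\gzL$, so its value at $s=k$ is unambiguously defined by the meromorphic extension of $\gzL$. Once this is accepted, everything else reduces to straightforward bookkeeping of residues.
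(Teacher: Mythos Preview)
Your argument is correct and follows essentially the same route as the paper's proof. The paper formalizes the head--tail split you invoke via the decomposition $\gzT=\gzT[\head]+\gzT[\tail]$ of \eqref{eqn:volume-zeta-split2} (obtained by writing $\crv_k(\gen,\scale_j^{-1}\ge)=f_k(\scale_j^{-1}\ge)+\crv_k(\gen)$ rather than by truncating the $j$-sum directly) and packages the residue computations into Lemma~\ref{thm:residues-of-gzT}; but once one unwinds those definitions, the calculation is exactly the one you wrote, including the handling of the cancellation at $s=d$ and the interpretation of the tail sum at integers $k\le\abscissa$ via the meromorphic continuation of $\gzL$.
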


The proof of Corollary~\ref{thm:ptwise-result-self-similar-case-simplified} is postponed to Section~\ref{sec:proof-of-fractal-tube-formula}, as it is technical and depends on the terminology and technique developed in the first part of Section~\ref{sec:the-proof} (the proof of the tube formula for fractal sprays, Theorem~\ref{thm:pointwise-tube-formula}).
Corollary~\ref{thm:ptwise-result-self-similar-case-simplified} also allows us to recover the pointwise version of \cite[Corollary~8.7]{TFCD}, where the generator \gen was assumed to be monophase.

\begin{cor}[Fractal tube formula, monophase case]
  \label{thm:ptwise-result-self-similar-case-monophase}
  In addition to the hypotheses of Corollary~\ref{thm:ptwise-result-self-similar-case-simplified}, assume that \gen is monophase. Then, for all $0 < \ge < \genir$, we have the pointwise tube formula for self-similar tilings:
  \linenopax
  \begin{align}\label{eqn:ptwise-result-self-similar-case-monophase}
    V(\tiling,\ge) 
    = \sum_{\pole \in \sD_\sL} c_\pole \ge^{d-\pole} + \sum_{k=0}^d c_k \ge^{d-k}
    = \sum_{\pole \in \sD_\sT} c_\pole \ge^{d-\pole},
  \end{align}
  where $c_\pole$ (for $\pole \in \sD_\sL$) and $c_k$ (for $k=0,1,\dots,d$) are as in \eqref{eqn:self-similar-pointwise-tube-formula-coefficients-cw} and \eqref{eqn:self-similar-pointwise-tube-formula-coefficients-ck}, respectively.
  \begin{proof}
    When \gen is monophase, each function $\crv_k(\gen,\mydot)$ is constant and equal to $\crv_k(\gen)$, and hence $e_k(\ge) = 0$ for each $\ge>0$ and $k=0,1,\dots,d$, where $e_k(\ge)$ is as in \eqref{eqn:self-similar-pointwise-tube-formula-coefficients-ek}. Consequently, Corollary~\ref{thm:ptwise-result-self-similar-case-monophase} follows immediately from Corollary~\ref{thm:ptwise-result-self-similar-case-simplified}. 
  \end{proof}
\end{cor}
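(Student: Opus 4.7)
The plan is to derive this corollary directly from Corollary~\ref{thm:ptwise-result-self-similar-case-simplified}, by showing that the monophase hypothesis kills the $e_k(\ge)$ terms and then by repackaging the two sums into a single sum indexed by $\sD_\sT$. Since every hypothesis of Corollary~\ref{thm:ptwise-result-self-similar-case-simplified} is assumed to hold, I can invoke it verbatim to obtain, for all $0<\ge<\genir$,
\begin{equation*}
  V(\tiling,\ge) = \sum_{\pole\in \sD_{\sL}} c_\pole \ge^{d-\pole} + \sum_{k=0}^{d} \bigl(c_k + e_k(\ge)\bigr) \ge^{d-k},
\end{equation*}
with $c_\pole$, $c_k$, $e_k(\ge)$ given by \eqref{eqn:self-similar-pointwise-tube-formula-coefficients-cw}--\eqref{eqn:self-similar-pointwise-tube-formula-coefficients-ek}.

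The key observation is that, by Remark~\ref{rem:monophase} together with the extension convention \eqref{eqn:coeff-ext}, the monophase hypothesis means precisely that each coefficient function $\crv_k(\gen,\mydot)$ is constant on the entire ray $(0,\iy)$, equal to $\crv_k(\gen)$. Inserting this into the definition \eqref{eqn:self-similar-pointwise-tube-formula-coefficients-ek} of $e_k(\ge)$, every summand becomes $\scale_j^k\bigl(\crv_k(\gen)-\crv_k(\gen)\bigr)=0$, so $e_k(\ge)\equiv 0$ for each $k\in\{0,1,\ldots,d\}$. This immediately yields the first equality in \eqref{eqn:ptwise-result-self-similar-case-monophase}. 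For the second equality, I would use the simplicity of the poles of $\gzT$ (inherited from the hypotheses of Corollary~\ref{thm:ptwise-result-self-similar-case-simplified}) to conclude that $\sD_\sL\cap\{0,1,\ldots,d\}=\emptyset$, so the decomposition $\sD_\sT=\sD_\sL\cup\{0,1,\ldots,d\}$ is disjoint; extending the notation $c_\pole$ to the integer dimensions via \eqref{eqn:self-similar-pointwise-tube-formula-coefficients-ck}, the two sums merge into the single expression $\sum_{\pole\in \sD_\sT} c_\pole \ge^{d-\pole}$.

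There is essentially no obstacle in this argument. The only point deserving mild care is to verify that the monophase condition gives $\crv_k(\gen,\mydot)\equiv \crv_k(\gen)$ on the full ray $(0,\iy)$ rather than only on $(0,\genir]$, where it is imposed by definition: on $(\genir,\iy)$ this uniform constancy is exactly what the extension convention \eqref{eqn:coeff-ext} enforces, and this is why that convention was adopted in the first place. Once this is noted, the remainder of the proof is pure bookkeeping, since all of the analytic work (meromorphic continuation of $\gzT$, strongly languid estimates, contour shifts and residue calculations) has already been carried out in deriving Corollary~\ref{thm:ptwise-result-self-similar-case-simplified}.
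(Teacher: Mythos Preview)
Your proof is correct and follows essentially the same approach as the paper: both invoke Corollary~\ref{thm:ptwise-result-self-similar-case-simplified} and observe that the monophase hypothesis forces $\crv_k(\gen,\mydot)\equiv\crv_k(\gen)$, so each $e_k(\ge)$ vanishes. You add a bit more detail than the paper does (justifying constancy on all of $(0,\iy)$ via \eqref{eqn:coeff-ext} and spelling out the disjointness argument for the second equality), but the argument is the same.
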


\begin{remark}\label{rem:general-spray}
  For an arbitrary fractal spray \tiling satisfying the hypotheses of the strongly languid case of Theorem~\ref{thm:pointwise-tube-formula}, 
  it follows from Theorem~\ref{thm:pointwise-tube-formula} (instead of Theorem~\ref{thm:ptwise-result-self-similar-case}),  that \eqref{eqn:self-similar-pointwise-tube-formula} holds pointwise for all $0 < \ge < \min\{\genir, A^{-1}\genir\}$.
  If, in addition, all the complex dimensions of \tiling are simple, then one can deduce from Lemma~\ref{thm:residues-of-gzT} (as in the proof of Corollary~\ref{thm:ptwise-result-self-similar-case-simplified}) that \eqref{eqn:self-similar-pointwise-tube-formula-simplified} holds; see also Remark~\ref{rem:residues-of-gzT} in this regard. Moreover, if \gen is assumed to be monophase, then \eqref{eqn:self-similar-pointwise-tube-formula-simplified} takes the simpler form \eqref{eqn:ptwise-result-self-similar-case-monophase}. A parallel remark holds (under the assumptions of the languid case of Theorem~\ref{thm:pointwise-tube-formula}) for the tube formulas with error term considered in Section~\ref{sec:Pointwise-tube-formulas-with-error-term}.
\end{remark}

\subsection{Pointwise tube formulas with error term}
\label{sec:Pointwise-tube-formulas-with-error-term}

\begin{theorem}[Pointwise tube formula with error term for self-similar tilings]
  \label{thm:Pointwise-tube-formula-with-error-term}
  Assume that \tiling is a self-similar tiling with generator \gen, and that a Steiner-like representation for \gen has been chosen. Let $S$ be a screen such that $S(0)<0$ (so that all integer dimensions are visible) and let $W$ be the associated window. Then for all $\ge > 0$,
  \linenopax
  \begin{align}\label{eqn:Pointwise-tube-formula-with-error-term}
    V(\tiling,\ge)
    = \sum_{\pole \in \DT(W)} \res{\gzT(\ge,s)} + \gl_d(\gen) \gzL(d) + \R(\ge),
  \end{align}
  where the error term $\R(\ge)$ is given explicitly as in \eqref{eqn:pointwise-error} and satisfies $\R(\ge) = O(\ge^{d-\sup S})$, as $\ge \to 0^+$.
  
  Moreover, if \gen is monophase, then this same conclusion holds without the assumption that $S(0)<0$, as long as $S$ avoids the set $\{0,1,\dots,d\}$.\footnote{In particular, this allows for a screen $S$ which lies arbitrarily close to the vertical line $\Re s = \abscissa$.} In addition, $\R(\ge)$ is equivalently given by \eqref{eqn:pointwise-error2} in this case.
  \begin{proof}
    This follows immediately from the first part of Theorem~\ref{thm:pointwise-tube-formula}, since the proof of Theorem~\ref{thm:ptwise-result-self-similar-case} implies \gzL is languid of order $\languidOrder = 0 < 1$ along any screen $S$.
    When \gen is monophase, the latter claim follows from Corollary~\ref{thm:fractal-spray-tube-formula-monophase}. Finally, it follows from the second part of Remark~\ref{rem:error-term} that in the monophase case, $\R(\ge)$ is equivalently given by \eqref{eqn:pointwise-error} or \eqref{eqn:pointwise-error2}.
  \end{proof}
\end{theorem}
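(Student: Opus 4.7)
The plan is to derive this theorem as a corollary of the first part of Theorem~\ref{thm:pointwise-tube-formula} (the tube formula with error term for general fractal sprays), by verifying that the hypotheses of that general result are satisfied in the self-similar case. Recall from Definition~\ref{def:self-similar-tiling} and the discussion at the start of Section~\ref{sec:Self-similar-tilings} that a self-similar tiling is, in particular, a fractal spray scaled by the string $\sL = \{\scale_j\}_{j=1}^\iy$ whose scaling zeta function admits the explicit meromorphic extension \eqref{eqn:gzL-extended-to-C}. Thus the abscissa of convergence satisfies $\abscissa < d$ by Proposition~\ref{cor:OSC-dimension-d-implies-trivial} and \eqref{eqn:D-dimensions}, and it remains to verify the languidity of $\gzL$ along the given screen $S$.

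The key step is to establish that $\gzL$ is languid of order $\languidOrder = 0 < 1$ along \emph{any} screen $S$ (not merely along a sequence of screens as in the strongly languid case used to prove Theorem~\ref{thm:ptwise-result-self-similar-case}). Starting from $\gzL(s) = (1 - \sum_{n=1}^N r_n^s)^{-1}$, I would bound the denominator from below on horizontal lines $\Im s = T_n$ (for a suitable sequence $\{T_n\}$ satisfying \eqref{eqn:Tn seq conds} and chosen to avoid the scaling complex dimensions) and on the screen $S$ itself, using the fact that $\sum_{n=1}^N r_n^s \to 0$ as $\Re s \to -\iy$ (since each $0 < r_n < 1$) and is bounded on strips of bounded real part away from its zeros. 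This gives \textbf{L1} and \textbf{L2} with $\languidOrder = 0$; in fact the same computation as in \eqref{eqn:languid-of-order-0} shows that $|\gzL(s)|$ stays bounded on such lines once one excludes suitable neighborhoods of the poles. Since $\languidOrder = 0 < 1$ and $S(0) < 0$ places all the integers $\{0,1,\dots,d\}$ inside $W$, the hypotheses of the first part of Theorem~\ref{thm:pointwise-tube-formula} apply, yielding \eqref{eqn:Pointwise-tube-formula-with-error-term} with $\R(\ge) = O(\ge^{d-\sup S})$ as $\ge \to 0^+$, and with the explicit representation \eqref{eqn:pointwise-error} for the error term.

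For the monophase case, I would simply invoke Corollary~\ref{thm:fractal-spray-tube-formula-monophase} in place of the first part of Theorem~\ref{thm:pointwise-tube-formula}; this corollary relaxes the hypothesis $S(0) < 0$ to the much weaker requirement that $S$ avoid the integer points $\{0,1,\dots,d\}$, which is exactly what is claimed. The equivalence of the two expressions \eqref{eqn:pointwise-error} and \eqref{eqn:pointwise-error2} for $\R(\ge)$ in the monophase case is then a direct citation of the second part of Remark~\ref{rem:error-term}, which records that for monophase generators the head-tail decomposition of $\gzT$ degenerates so that $\gzT[\tail] = \gzT$.

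The main obstacle I anticipate is the uniform languidity estimate for $\gzL$ along an arbitrary screen $S$: one must be careful to choose the sequence $\{T_n\}$ so that the horizontal lines $\Im s = T_n$ stay uniformly bounded away from the (quasi-)periodically distributed scaling complex dimensions (as described in Proposition~\ref{thm:lattice-nonlattice-dichotomy}), in order to control $|1-\sum_n r_n^s|$ from below. In the lattice case this is straightforward because the complex dimensions lie on finitely many vertical lines with a common period; in the nonlattice case one relies on their quasiperiodic structure as in \cite[Chapter~3]{FGCD}. Once this bound is in place, the rest of the argument is a direct reduction to the already-proven results for fractal sprays.
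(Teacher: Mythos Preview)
Your proposal is correct and follows essentially the same approach as the paper: reduce to the languid case of Theorem~\ref{thm:pointwise-tube-formula} by verifying that $\abscissa<d$ (via nontriviality) and that $\gzL$ is languid of order $\languidOrder=0$ along any screen, then invoke Corollary~\ref{thm:fractal-spray-tube-formula-monophase} and Remark~\ref{rem:error-term} for the monophase refinement. The paper's proof is terser---it simply cites the proof of Theorem~\ref{thm:ptwise-result-self-similar-case} for the languidity of order $0$---whereas you spell out more of the mechanics (the choice of $\{T_n\}$ away from the scaling dimensions, the lattice/nonlattice distinction), but the logical skeleton is identical.
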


The following result is the exact counterpart of Corollary~\ref{thm:ptwise-result-self-similar-case-simplified} (or of Corollary~\ref{thm:ptwise-result-self-similar-case-monophase} when \gen is monophase).

\begin{cor}[Fractal tube formula with error term]
  \label{thm:Fractal-tube-formula-with-error-term}
  Assume, in addition to the hypotheses of Theorem~\ref{thm:Pointwise-tube-formula-with-error-term}, that the visible poles of the tubular zeta function are simple (which implies that $\DL(W)$ and $\{0,1,\dots,d\}$ are disjoint). Then for all $\ge > 0$,
  \linenopax
  \begin{align}\label{eqn:Fractal-tube-formula-with-error-term}
    V(\tiling,\ge)
    = \sum_{\pole \in \DL(W)} \negsp[10] c_\pole \ge^{d-\pole} 
    + \sum_{k \in \{0,1,\dots,d\} \cap W} \negsp[20] (c_k + e_k(\ge)) \ge^{d-k} 
    + \R(\ge),
  \end{align}
  where the error term $\R(\ge)$ is as in \eqref{eqn:pointwise-error} and $c_\pole$, $c_k$, $e_k$ are as in \eqref{eqn:self-similar-pointwise-tube-formula-coefficients-cw}--\eqref{eqn:self-similar-pointwise-tube-formula-coefficients-ek}.
  
  Moreover, if \gen is assumed to be monophase, then \eqref{eqn:Fractal-tube-formula-with-error-term} holds for any screen which avoids the set $\{0,1,\dots,d\}$, and the formula takes the simpler form
  \linenopax
  \begin{align}\label{eqn:Fractal-tube-formula-with-error-term-simplified}
    V(\tiling,\ge)
    = \sum_{\pole \in \DL(W)} \negsp[10] c_\pole \ge^{d-\pole} 
    + \sum_{k \in \{0,1,\dots,d\} \cap W} \negsp[20] c_k \ge^{d-k} 
    + \R(\ge)\,,
  \end{align}
  with the error term $\R(\ge)$ as in \eqref{eqn:pointwise-error2}.
  \begin{proof}
    This follows from Theorem~\ref{thm:Pointwise-tube-formula-with-error-term} by the same methods as in Corollary~\ref{thm:ptwise-result-self-similar-case-simplified} (or Corollary~\ref{thm:ptwise-result-self-similar-case-monophase}, when \gen is monophase).
  \end{proof}
\end{cor}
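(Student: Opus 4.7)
\medskip

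\textbf{Proof proposal.} The plan is to apply Theorem~\ref{thm:Pointwise-tube-formula-with-error-term} and then explicitly compute each residue $\res[s=\pole]{\gzT(\ge,s)}$ appearing in the sum \eqref{eqn:Pointwise-tube-formula-with-error-term}, using the new hypothesis that the visible poles are simple. Since, by simplicity, the visible scaling complex dimensions $\DL(W)$ and the visible integer dimensions $\{0,1,\dots,d\}\cap W$ are disjoint, the two families can be handled separately, and the two resulting sums will become the first two sums on the right-hand side of \eqref{eqn:Fractal-tube-formula-with-error-term}. The error term and the constant $\gl_d(\gen)\gzL(d)$ carry over unchanged from Theorem~\ref{thm:Pointwise-tube-formula-with-error-term}. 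Essentially, this is the same computation already carried out in (the proof of) Corollary~\ref{thm:ptwise-result-self-similar-case-simplified}, but relativized to the window $W$.

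First I would dispatch the scaling dimensions. Fix $\pole\in\DL(W)$. By the simplicity hypothesis, $\pole\notin\{0,1,\ldots,d\}$, so the factor
\[
\sum_{k=0}^{d}\frac{\genir^{s-k}}{s-k}\crv_k(\gen,\scale_j^{-1}\ge) - \frac{\genir^{s-d}}{s-d}\gl_d(G)
\]
is holomorphic at $s=\pole$, and the only pole of $\gzT(\ge,\mydot)$ at $\pole$ comes from the scaling zeta function $\gzL$, which appears through $\sum_j\scale_j^s$ after the head-tail splitting (the tail contributes $\gzL$). An application of Lemma~\ref{thm:residues-of-gzT} (i.e., exactly the computation of \eqref{eqn:self-similar-pointwise-tube-formula-coefficients-cw}) then yields $\res[s=\pole]{\gzT(\ge,s)}=c_\pole\ge^{d-\pole}$, where the $(d-\pole)^{-1}$ factor in $c_\pole$ comes from evaluating $\genir^{s-d}/(s-d)$ of the subtracted term at $s=\pole$, after combining the two contributions into a single rational function of $s$ over $\{0,\ldots,d-1\}$ (this is the same rewriting already used in \eqref{eqn:gzT-for-big-epsilon}).

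Next I would handle an integer dimension $k\in\{0,1,\ldots,d\}\cap W$. Here the head-tail splitting from the proof of Proposition~\ref{prop:poles} is essential: write
\[
\sum_{j=1}^\iy\scale_j^s\crv_k(\gen,\scale_j^{-1}\ge)=\underbrace{\sum_{j=1}^{J(\ge)}\scale_j^s\crv_k(\gen,\scale_j^{-1}\ge)}_{\text{entire in }s}+\crv_k(\gen)\sum_{j=J(\ge)+1}^{\iy}\scale_j^s.
\]
At $s=k$, the simple pole of the factor $1/(s-k)$ in \eqref{eqn:def:volume-zeta} extracts, from the entire head, precisely $\ge^{d-k}\sum_{j=1}^{J(\ge)}\scale_j^k(\crv_k(\gen,\scale_j^{-1}\ge)-\crv_k(\gen))=e_k(\ge)\ge^{d-k}$, and, from the tail (together with the constant piece of the head), precisely $\crv_k(\gen)\gzL(k)\ge^{d-k}=c_k\ge^{d-k}$; the subtracted term $-\frac{\genir^{s-d}}{s-d}\gl_d(G)$ is regular at $s=k$ for $k<d$, while at $k=d$ it cancels against the $k=d$ term in the main sum in exactly the manner leading to \eqref{eqn:gzT-for-big-epsilon}, so no extra contribution appears beyond $c_k+e_k(\ge)$. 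Summing these residues over $k\in\{0,\ldots,d\}\cap W$ and adding the scaling-dimension sum together with $\gl_d(\gen)\gzL(d)$ and the error term from \eqref{eqn:pointwise-error} then produces \eqref{eqn:Fractal-tube-formula-with-error-term}.

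The monophase case reduces to the preceding argument applied via Corollary~\ref{thm:fractal-spray-tube-formula-monophase} (rather than Theorem~\ref{thm:Pointwise-tube-formula-with-error-term}), which permits any screen avoiding $\{0,1,\ldots,d\}$. Since $\crv_k(\gen,\mydot)\equiv\crv_k(\gen)$, one has $e_k(\ge)\equiv 0$ by \eqref{eqn:self-similar-pointwise-tube-formula-coefficients-ek}, so \eqref{eqn:Fractal-tube-formula-with-error-term} collapses to \eqref{eqn:Fractal-tube-formula-with-error-term-simplified}; the alternate expression \eqref{eqn:pointwise-error2} for the error term comes from the identity $\gzT[\tail]=\gzT$ in the monophase setting, as noted in Remark~\ref{rem:error-term}. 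I expect the only real obstacle to be bookkeeping around $s=d$, where the subtraction of $\frac{\genir^{s-d}}{s-d}\gl_d(G)$ in the definition of $\gzT$ and the separate additive term $\gl_d(\gen)\gzL(d)$ both interact with the $k=d$ summand; one must verify that they combine cleanly to give $(c_d+e_d(\ge))\ge^0$ without double-counting, but this is precisely what \eqref{eqn:gzT-for-big-epsilon} and the identity \eqref{eqn:leb-coeff-rel} are designed to ensure.
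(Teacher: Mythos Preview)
Your proposal is correct and follows essentially the same route as the paper: apply Theorem~\ref{thm:Pointwise-tube-formula-with-error-term} and then compute the residues via the head--tail decomposition and Lemma~\ref{thm:residues-of-gzT}, exactly as in the proof of Corollary~\ref{thm:ptwise-result-self-similar-case-simplified} (Section~\ref{sec:proof-of-some-corollary}), with the monophase simplification handled just as in Corollary~\ref{thm:ptwise-result-self-similar-case-monophase}. Your caveat about the bookkeeping at $s=d$ is well placed: in the paper this is resolved by noting (via \eqref{eqn:res(gzT,d)}) that $\res[s=d]{\gzT[\tail](\ge,s)}=(\crv_d(\gen)-\gl_d(\gen))\gzL(d)$, which combines with the additive term $\gl_d(\gen)\gzL(d)$ to yield $c_d=\crv_d(\gen)\gzL(d)$.
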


\begin{remark}\label{rem:S(0)<0-yet-again}
  The significance of the assumption $S(0)<0$, and more importantly, the need for being able to omit it, is discussed in Remark~\ref{rem:0-in-W} and Section~\ref{rem:0-in-W-redux}. See also \cite{MMFS}.
\end{remark}


\section{Examples}
\label{sec:Examples} 

Firstly, it should be noted that Theorem~\ref{thm:ptwise-result-self-similar-case} implies that all tube formula results for the examples of self-similar tilings of \cite{TFCD,SST,GeometryOfSST} are now known to hold pointwise. This includes the Koch tiling (Figure~\ref{fig:koch-tiling} and \cite[Fig.~2 \& 3]{SST}), the Sierpinski gasket tiling (Figure~\ref{fig:sierpinski-gasket} and \cite[Fig.~6]{SST}), the Sierpinski carpet tiling (Figure~\ref{fig:sierpinski-carpet} and \cite[Fig.~7]{SST}), the pentagasket tiling (Figure~\ref{fig:pentagasket} and \cite[Fig.~5]{TFCD}), the Menger tiling (Figure~\ref{fig:menger-sponge} and \cite[Fig.~8]{SST}), and the three U-shaped examples from \cite[Fig.~3]{GeometryOfSST} (see Figure~\ref{fig:u-shaped} for one of them). The tube formulas of the first three of these examples can be found in \cite[Section~9]{TFCD}. 

In Figures~\ref{fig:koch-tiling}--\ref{fig:cantor_carpet_tiling} as well as in Figure~\ref{fig:u-shaped}, the following sets are shown from left to right.  The set $O$ is the initial open set of the tiling construction. (In all examples except the U-shaped one in Figure~\ref{fig:u-shaped}, $O$ is the interior of the convex hull of the underlying self-similar set.) The second set shows the generator(s) of the tiling (or, more precisely, the set $O\setminus\simt(\cj{O})$), while the subsequent ones give the first iterates of the generator(s) under the set mapping \simt. The right-most set always shows the union of all tiles of the generated tiling $\sT(O)$.

Of the self-similar tilings mentioned just above, only the Cantor carpet tiling and the U-shaped tiling will be studied in more detail below. Apart from illustrating how the tube formulas are applied in general, these two examples exhibit some important new features of the results obtained in this paper. Indeed, the Cantor carpet tiling (discussed in Section~ \ref{exm:Cantor Carpet Tiling}) has a generator which is not monophase (and not even pluriphase), a situation not covered by previous results. Furthermore, the U-shaped example (discussed in Section~\ref{exm:U-shaped}) has a generator which is itself fractal, in the sense that it has arbitrary small features and exhibits some kind of self-similarity.
Finally, the binary trees discussed in Section~\ref{exm:binary-tree} and the Apollonian packings discussed in Section~\ref{exm:Apollonian-packings} are natural examples of fractal sprays which are not self-similar tilings.

\begin{figure}
  \centering
  \scalebox{0.7}{\includegraphics{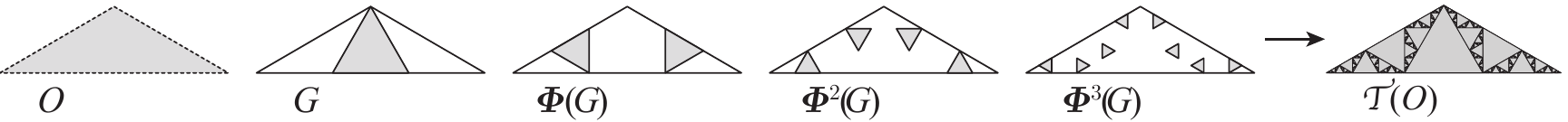}}
  \caption{The Koch curve tiling.}
  \label{fig:koch-tiling}
\end{figure}

\begin{figure}
  \centering
  \scalebox{0.80}{\includegraphics{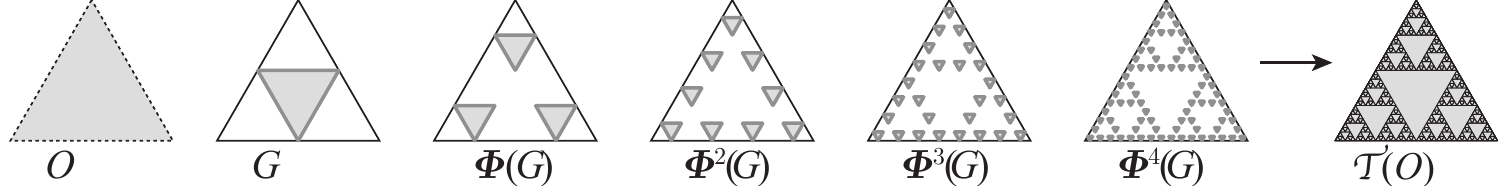}}
  \caption{The Sierpinski gasket tiling.}
  \label{fig:sierpinski-gasket}
\end{figure}

\begin{figure}
  \centering
  \scalebox{0.82}{\includegraphics{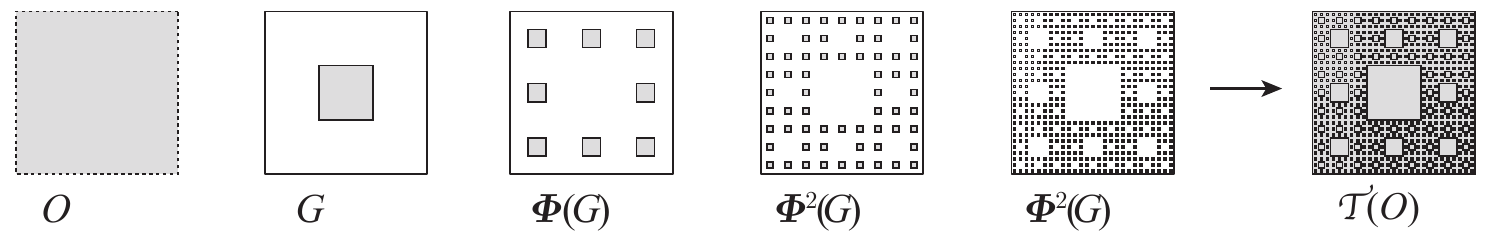}}
  \caption{The Sierpinski carpet tiling.}
  \label{fig:sierpinski-carpet}
\end{figure}

\begin{figure}
  \centering
  \scalebox{0.80}{\includegraphics{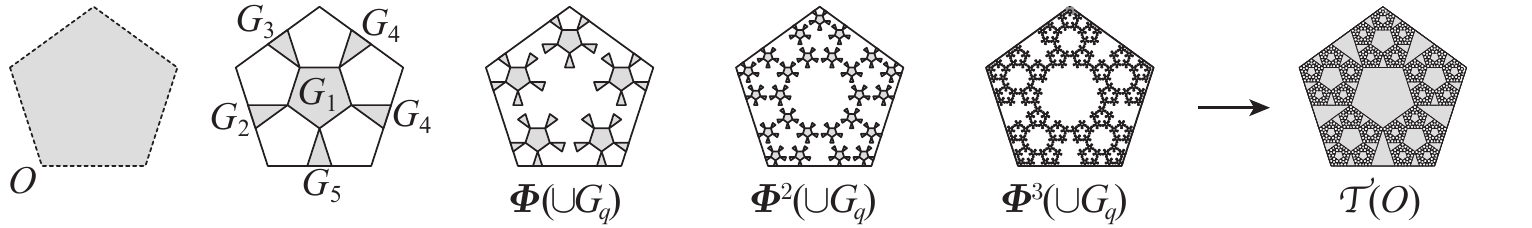}}
  \caption{The pentagasket tiling has multiple generators: one equilateral pentagon and five isoceles triangles.}
  \label{fig:pentagasket}
\end{figure}

\begin{figure}
  \centering
  \scalebox{0.8}{\includegraphics{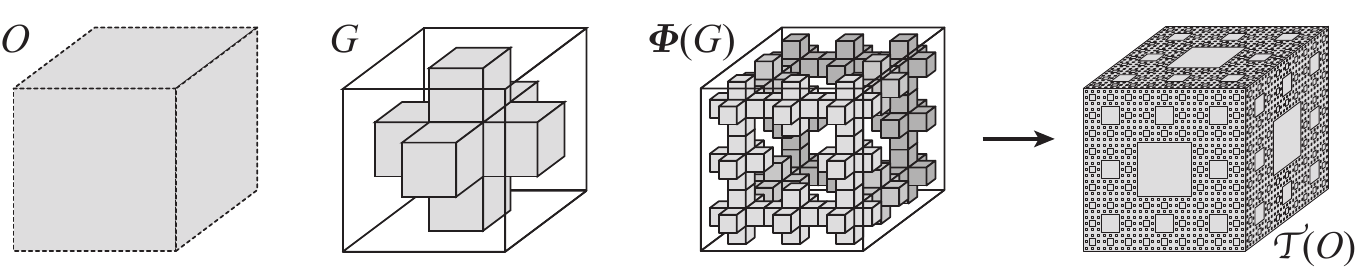}}
  \caption{The Menger sponge tiling has a Steiner-like generator which is neither convex nor pluriphase; see the computations for the Cantor carpet in Section~\ref{exm:Cantor Carpet Tiling}, for which the Menger sponge is a 3-dimensional analogue.}
  \label{fig:menger-sponge}
\end{figure}

\subsection{The Cantor carpet tiling} 
\label{exm:Cantor Carpet Tiling}

We consider the self-similar tiling associated to the Cartesian product $C \times C\ci\bR^2$ of the ternary Cantor set $C$ with itself; see Figure~\ref{fig:cantor_carpet_tiling}. By abuse of notation, we denote the associated self-similar tiling by $\sC^2$. 
  The fractal $C \times C$ is constructed via the self-similar system defined by the four maps
\[\simt_\j(x) = \tfrac13 x + \tfrac23 p_\j, \qq \j=1,\dots,4,\]
with common scaling ratio $r=\frac13$, and points $p_\j$ being the vertices of a square, as seen in Figure~\ref{fig:cantor_carpet_tiling}. Consequently, the corresponding string $\sL_{\sC^2} = \{\scale_j\}_{j=1}^\iy$ has scales 
  \linenopax
  \begin{align}\label{eqn:cantor_carpet-scales}
    \scale_j = 3^{-[\log_4 3j]},
    \qq j=1,2,\dots,
  \end{align}
where $[x]$ is the integer part of $x$.

\begin{figure}
  \centering
  \scalebox{0.90}{\includegraphics{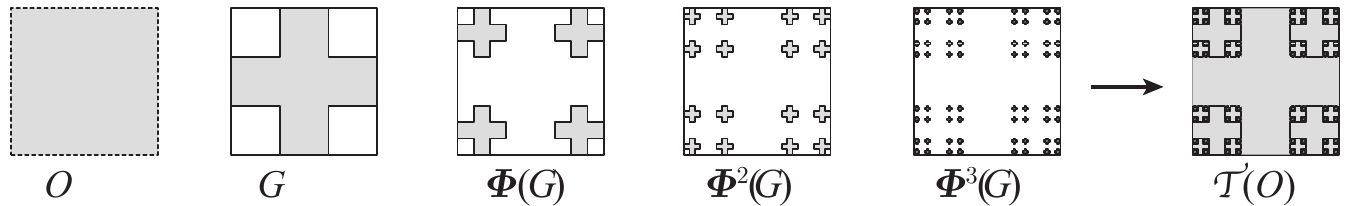}}
  \caption{The Cantor carpet tiling $\sC^2$.}
  \label{fig:cantor_carpet_tiling}
\end{figure}
\begin{figure}
  \centering
  \scalebox{0.70}{\includegraphics{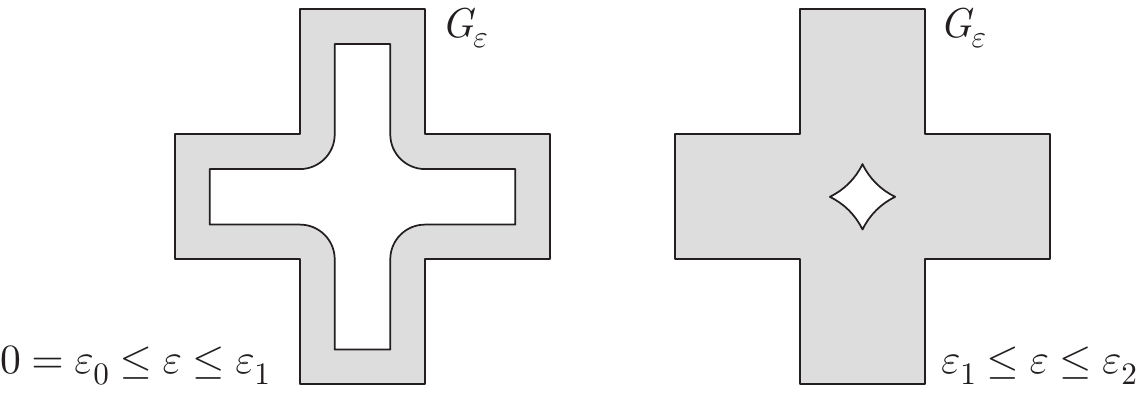}}
  \caption{The generator of the tiling $\sC^2$ is not pluriphase.}
  \label{fig:cantor_carpet_tiling_generator}
\end{figure}

The Cantor carpet tiling $\sC^2$ is discussed here because it has a generator \gen which is not monophase (and not even pluriphase), as seen in Figure~\ref{fig:cantor_carpet_tiling_generator} and formula \eqref{eqn:gen-tube-formula-for-Cantor-Carpet}. The inradius of the generator is $\genir = \gr(\gen) = \ell/(3\sqrt2)$, where $\ell$ is the side length of the initial square (we set $\ell=1$ in the sequel), and the relevant partition of the \ge-interval $(0,\genir]$ is
\linenopax
  \begin{align*}
  \{\ge_0=0, \ge_1=\tfrac{\genir}{\sqrt2}=\tfrac16, \ge_2=\genir=\tfrac{\sqrt2}6\}.
\end{align*}

The tube formula for the generator of this tiling is given by the following Steiner-like (but clearly not pluriphase) representation:
\linenopax
  \begin{align}\label{eqn:gen-tube-formula-for-Cantor-Carpet}
  V(\gen,\ge) =
  \begin{cases}
    (\gp-8)\ge^2 + 12\sqrt2\genir\ge, &0 < \ge \leq \tfrac{\genir}{\sqrt2},\\
    \gp \ge^2 - 4\arccos\left(\frac{\genir}{\ge\sqrt2}\right)\ge^2 + 2\genir\sqrt{2\ge^2-\genir^2} + 8\genir^2, &\tfrac{\genir}{\sqrt2} < \ge \leq \genir,\\
    10\genir^2, &\ge \geq \genir.
  \end{cases}
\end{align}
Here, for $\ge_1 < \ge \leq \ge_2$, the constant term $8\genir^2 = \frac49$ in \eqref{eqn:gen-tube-formula-for-Cantor-Carpet} gives the area of the four ``protrusions'' of \gen which are completely contained in $\gen_{-\ge}$. By \eqref{eqn:gen-tube-formula-for-Cantor-Carpet}, we can take the coefficient functions $\crv_k(\gen,\ge)$ to be
\linenopax
\begin{align}
  \crv_0(\gen,\ge) &=
  \begin{cases}
    \gp-8,  &0 < \ge \leq \tfrac{\genir}{\sqrt2},\\
    \gp - 4\arccos\left(\frac{\genir}{\ge\sqrt2}\right), &\tfrac{\genir}{\sqrt2} < \ge \leq \genir,
  \end{cases} \notag \\
  \crv_1(\gen,\ge) &=
  \begin{cases}
    12\sqrt2\genir,  &0 < \ge \leq \tfrac{\genir}{\sqrt2},\\
    \frac{2\genir}{\ge} \sqrt{2\ge^2 - \genir^2}, &\tfrac{\genir}{\sqrt2} < \ge \leq \genir,
  \end{cases} \label{eqn:cantor-carpet-crv1(gen)} \\
  \crv_2(\gen,\ge) &=
  \begin{cases}
    0,  &0 < \ge \leq \tfrac{\genir}{\sqrt2},\\
    8\genir^2, &\tfrac{\genir}{\sqrt2} < \ge \leq \genir,
  \end{cases} \notag
\end{align}

Since $\genir = \sqrt2/6$ and $\crv_k(\gen,\genir) = \crv_k(\gen)$ for $k=0,1,2$, it follows that 
\linenopax
  \begin{align}\label{eqn:cantor-carpet-kappa(G)s}
     \crv_0(\gen) = 0, \q
     \crv_1(\gen) = 2\genir = \frac{\sqrt2}3, \q
     \text{ and }\q
     \crv_2(\gen) = 8\genir^2 = \frac49.
  \end{align}
Note that according to \eqref{eqn:cantor-carpet-crv1(gen)}, each function $\crv_k(\gen,\ge)$ has a discontinuity at $\genir/\sqrt2$ but is analytic on each of the two intervals of the partition. Hence, it is piecewise analytic on $(0,\genir]$ in the sense of Section~\ref{rem:Piecewise-analytic-SLreps}.

From \eqref{eqn:cantor_carpet-scales}, the scale $\tfrac1{3^k}$ appears with multiplicity $4^{k}$, for $k=0,1,2,\dots$, so the scaling zeta function is
\linenopax
  \begin{align}
  \gzL(s) = \frac1{1 - 4 \cdot 3^{-s}},
  \qq s \in \bC.
\end{align}
It follows that the scaling complex dimensions are simple, and given by
\linenopax
\begin{align}
  \sD_\sL = \{\abscissa + \ii n\per \suth n \in \bZ\}
  \qq \text{with } \abscissa = \log_3 4, \; \per=\tfrac{2\gp}{\log 3},
\end{align}
and the corresponding residues are
\linenopax
\begin{align}\label{eqn:cantor-carpet-residues}
  \res[s=\abscissa + \ii n\per]{\gzL(s)}
  = \frac1{\log 3},
  \qq \text{for all } n \in \bZ.
\end{align}

Finally, we have the disjoint union $\DT = \sD_\sL \cup \{0,1,2\}$. 
All that remains is the substitution of the above quantities into the formula given in Corollary~\ref{thm:ptwise-result-self-similar-case-simplified}. We obtain 
%
  \linenopax
  \begin{align}
    V(\tiling,\ge)
    &= \frac1{\log 3} \sum_{n \in \bZ} \sum_{k=0}^{1} 
    \frac{\genir^{D-k+\ii n\per}(2-k)\crv_k(\gen)}{(D-k+\ii n\per)(2-D-\ii n\per)} 
    \ge^{2-D-\ii n\per} \notag \\
    &\hstr[4] + \sum_{k=0}^{2} \left(\frac{\crv_k(\gen)}{1-4\cdot3^{-k}} + \sum_{j=1}^{J(\ge)} 3^{-k[\log_4 3j]} \left(\crv_k(\gen, 3^{[\log_4 3j]} \ge) - \crv_k(\gen)\right)\right) \ge^{2-k},
      \label{eqn:cantor-carpet-tube-formula-simplified}
  \end{align}
  where $J(\ge) := \max\{j \geq 1 \suth \scale_j^{-1} \ge < \genir\} \vee 0$ as in \eqref{eqn:J(eps)}, and $[x]$ is the integer part of $x$. 
The computations for higher-dimensional analogues (like the Menger sponge) are extremely similar. In each case, the only complication is to obtain the tube formula for the generator. Observe that \tiling is a lattice tiling in the sense of Proposition~\ref{thm:lattice-nonlattice-dichotomy}.

\subsection{U-shaped modification of the Sierpinski carpet} 
\label{exm:U-shaped} 
  The U-shaped fractal of Figure~\ref{fig:u-shaped} is a modification of the Sierpinski carpet obtained by removing one contraction mapping from the self-similar system, and composing some of the remaining mappings with rotations of $\pm \gp/2$.
  The generator $\gen = \gen_1$ of $U$ from Figure~\ref{fig:u-shaped} provides an example of why it is useful to remove the requirement that $\lim_{\ge\to0^+} \crv_k(\gen,\ge)$ exists from Definition~\ref{def:Steiner-like} (Steiner-like)%
  \footnote{This assumption was part of the definition of \emph{Steiner-like} in \cite{TFCD} but was removed in the present paper.}%
  ; see Figure~\ref{fig:oscilliphase}. 

\begin{figure}
  \centering
  \scalebox{0.7}{\includegraphics{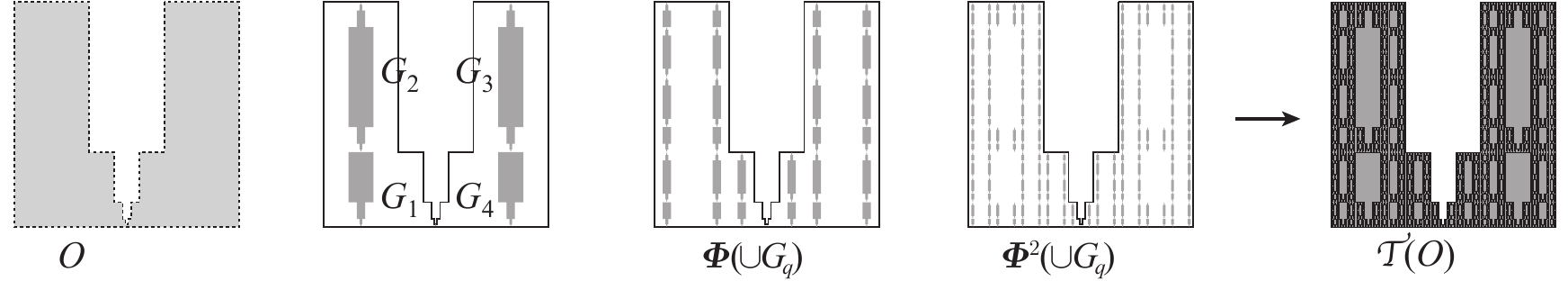}}
  \caption{The U-shaped example $U$ discussed in Section~\ref{exm:U-shaped}.}
  \label{fig:u-shaped}
\end{figure}

\begin{figure}
  \centering
  \scalebox{0.7}{\includegraphics{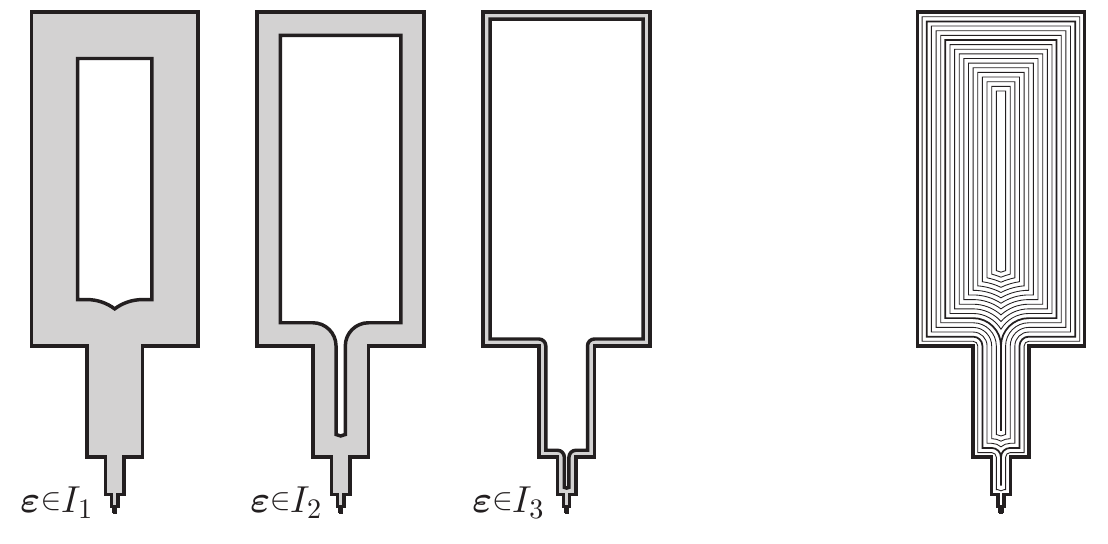}}
  \caption{The generator $\gen = \gen_1$ of $U$ from Figure~\ref{fig:u-shaped}; see Section~\ref{exm:U-shaped}.}
  \label{fig:oscilliphase}
\end{figure}

\begin{figure}
  \centering
  \scalebox{0.7}{\includegraphics{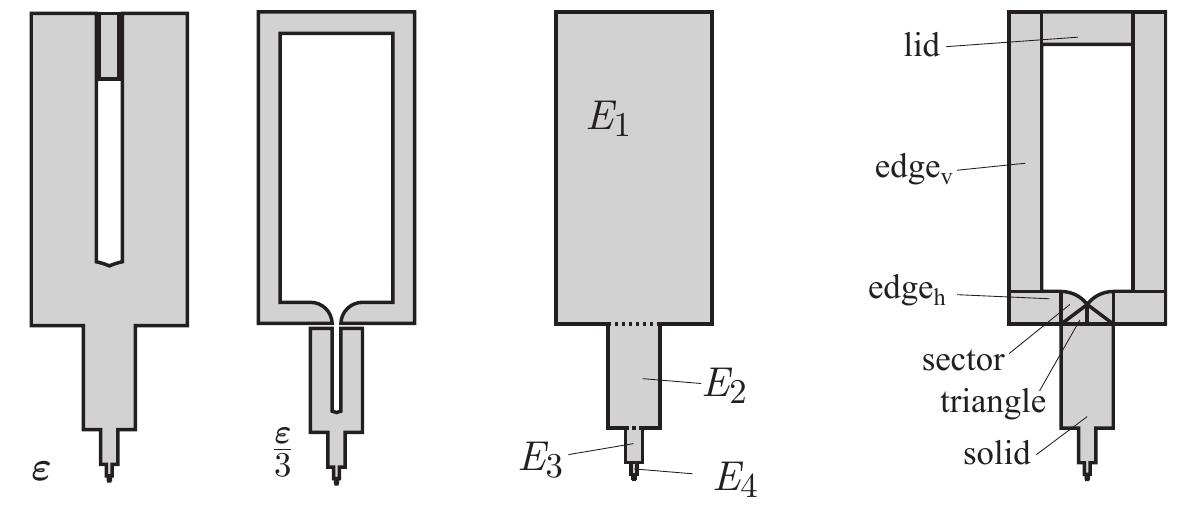}}
  \caption{The relation of $V(\gen,\ge)$ to $V(\gen,\frac{\ge}3)$.}
  \label{fig:oscilliphase-chambers}
\end{figure}
  
  To discuss $\gen = \gen_1$, let us consider the countable partition of $[0, \genir)$ defined by the sequence of intervals $I_m = [\frac{\genir}{3^{m}}, \frac{\genir}{3^{m-1}})$, for $m = 1,2,\dots$. Then the function $m(\ge) := [-\log_3 2\ge]$ gives the index $m=m(\ge)$ for which $\ge \in I_m$.   
  
  In this example, $V(\gen,\ge)$ satisfies a recurrence relation (see the left-hand side of Figure~\ref{fig:oscilliphase-chambers}) given for $\ge \in I_1$ by
  \linenopax
  \begin{align}\label{eqn:oscilliphase-recurrence}
    V(\gen,\ge) 
    = 9 V\left(\gen,\tfrac{\ge}3\right) 
    + \frac{17}9 - \frac{\ge}9 + \left(\gp -\frac{38}9\right)\ge^2 
    \qq \text{ for }\frac{\genir}{3} \leq \ge < \genir,   
  \end{align}
The generator $\gen$ is a union of countably many rectangles whose interiors are disjoint; consider these rectangles as defining a  sequence of ``chambers'' $\{E_m\}_{m=1}^\iy$, as depicted in Figure~\ref{fig:oscilliphase-chambers}. For $\ge \in I_m$, the constant term in $V(\gen,\ge)$ (corresponding to the region labelled ``solid'' in Figure~\ref{fig:oscilliphase-chambers}) is given by the volume of $\bigcup_{\ell=m+1}^\iy E_\ell$, which is
  \linenopax
  \begin{align}\label{eqn:oscilliphase-solid}
    \gl_2\left(\bigcup_{\ell=m+1}^\iy E_{\ell} \right)
    = \gl_2 \left(\frac1{3^m} \gen \right)
    = \frac1{9^m} \gl_2(\gen)
    = \frac1{9^{m+2}} \cdot \frac14.
  \end{align}

\subsection{A binary tree} 
\label{exm:binary-tree}

In this section, we consider the example of a binary tree embedded in $\bR^2$ in a certain way. This example shows how a very slight modification can change a monophase generator to a pluriphase generator, and also how one can compute the tube formula for a set which is not a self-similar fractal (but which does have some self-similarity properties). 

Consider the fractal sprays depicted in Figure~\ref{fig:binary-trees}. Each of these figures is formed by an equilateral triangle whose top vertex is the point $\gx =(1/2, \sqrt3/2)$ and whose base is the unit interval. Beginning at \gx and proceeding down one side of the triangle, one reaches the first branching at the point located $\frac23$ of the way to the bottom in (a) and at the point located $\frac34$ of the way to the bottom in (b). Consequently, the leaves of the first tree are the points of the usual ternary Cantor set, and the leaves of the second tree are the points of the (self-similar) Cantor set which is the attractor of the system $\{\simm_1(x) = \frac x4, \simm_2(x) = \frac x4 + \frac34\}$. It is clear from the ``phase diagram'' to the right of each spray that (a) is monophase and (b) is pluriphase.
\begin{figure}
  \centering
  \scalebox{0.9}{\includegraphics{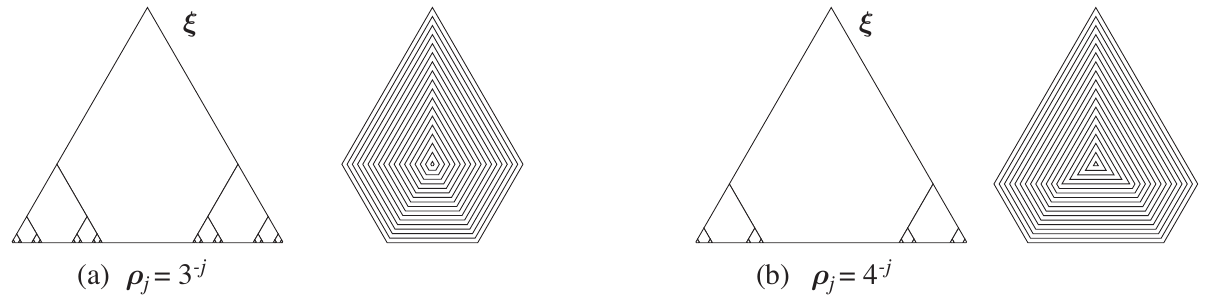}}
  \caption{The two binary trees discussed in Section~\ref{exm:binary-tree}. The left-hand one has a monophase generator and scaling ratios of the form $\scale_j = 3^{-j}$; the right-hand figure has a pluriphase generator and scaling ratios of the form $\scale_j = 4^{-j}$.}
  \label{fig:binary-trees}
\end{figure}


\subsection{Apollonian packings} 
\label{exm:Apollonian-packings}

We consider the fractal spray associated to an Apollonian packing; see Figure~\ref{fig:apollonian-packing}. Recall that the construction of this packing begins with three mutually tangent circles contained in a disk which is mutually tangent to all three. For the next stage of the construction, a new circle is inserted into each lune so as to be tangent to its three neighbours. The Apollonian packing is obtained by iterating infinitely many times. After removing the outermost disk, the rest of the circles in the packing form a fractal spray whose (monophase) generator is a disk, by \cite[Theorem~4.1]{GLMWYgg1}. 
This example of a fractal spray was suggested to us by Hafedh Herichi.
Full details on Apollonian packings and the Apollonian group may be found in \cite{GLMWYnt, GLMWYgg1}; we recommend the lecture notes \cite{Sarnak} for an introduction.

\begin{figure}
  \centering
  \scalebox{0.6}{\includegraphics{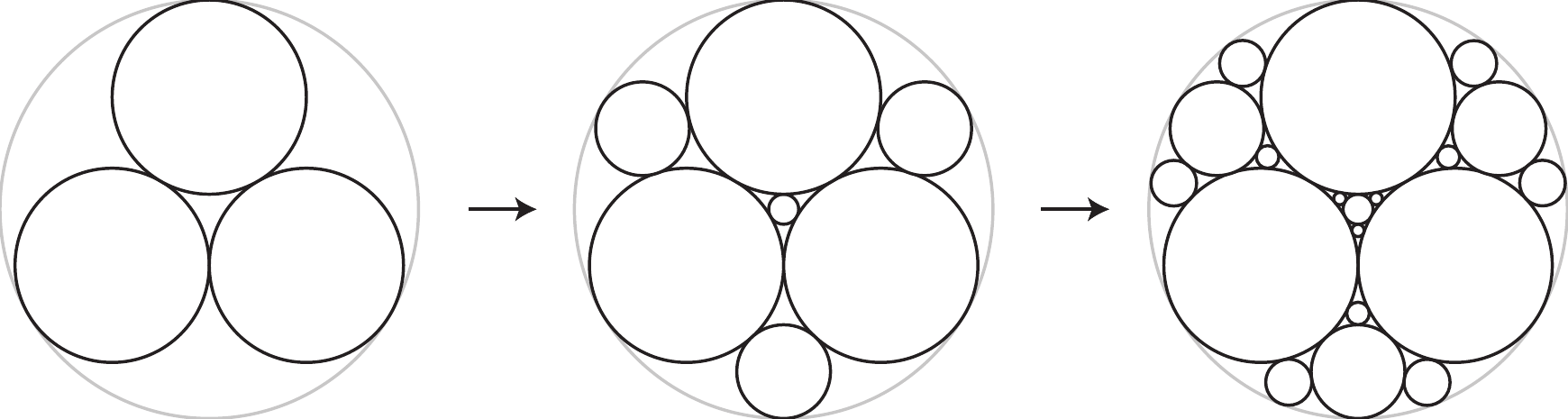}}
  \caption{The first three stages of the construction of the Apollonian packing for three circles with equal radii. The associated fractal spray does not include the outermost circle.}
  \label{fig:apollonian-packing}
\end{figure}

Apollonius' Theorem states that given any three mutually tangent circles $C_1, C_2, C_3$, there are exactly two circles $C_4^+, C_4^-$ that are tangent to the other three (allowing the possibility of a straight line as a circle of infinite radius). Thus, if we have any configuration of four circles, one may be removed and replaced by its counterpart; see Figure~\ref{fig:apollonian-action}. 

\begin{figure}
  \centering
  \scalebox{0.7}{\includegraphics{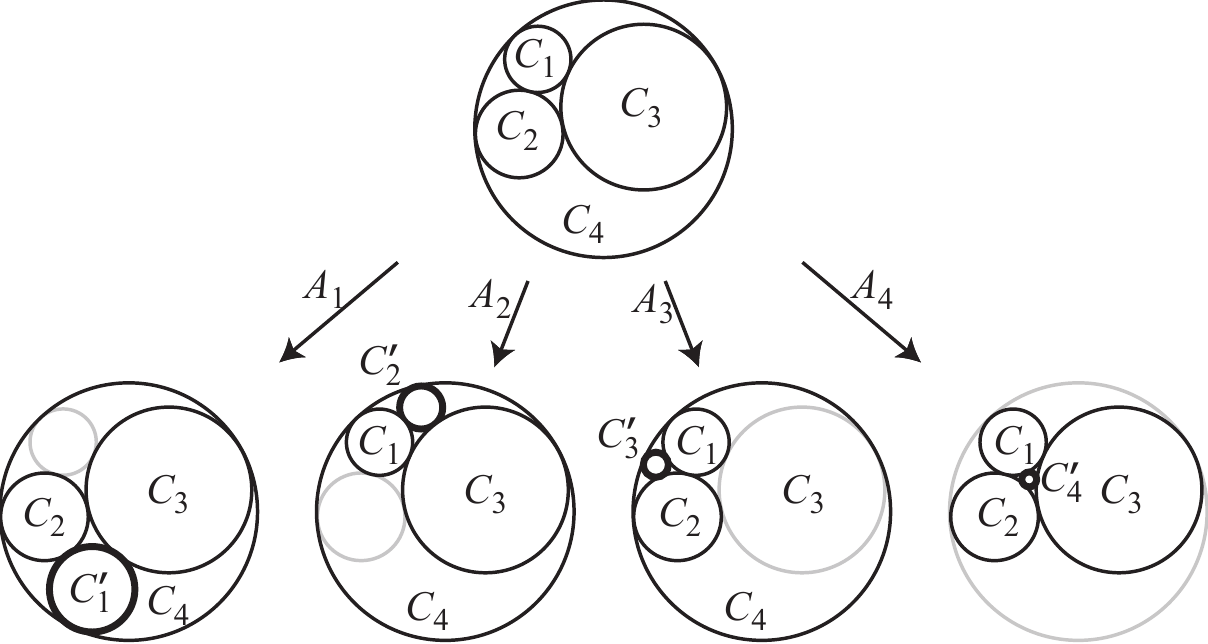}}
  \caption{The action of the Apollonian group on a configuration of 4 circles.}
  \label{fig:apollonian-action}
\end{figure}

Let $a = (a_1, a_2, a_3, a_4)$ be the 4-tuple whose entries are the reciprocal radii (i.e., the curvatures) of the four circles in a mutually tangent configuration. Descartes' Theorem states that these numbers must satisfy $F(a)=0$, where $F$ is the quadratic form 
\linenopax
\begin{align}\label{eqn:Descartes'-form}
  F(a) = 2(a_1^2 + a_2^2 + a_3^2 + a_4^2) - (a_1 + a_2 + a_3 + a_4)^2.
\end{align}
If we start with circles of given radii $a_1^{-1}, a_2^{-1}, a_3^{-1}$, then this allows us to find a fourth via
\linenopax
\begin{align*}
  a_4 = a_1 + a_2 + a_3 \pm 2\sqrt{a_1 a_2 + a_1 a_3 + a_2 a_3}.
\end{align*}
Thus, if we start with three circles of radius $a_1^{-1} = a_2^{-1} = a_3^{-1} = 1$, as in Figure~\ref{fig:apollonian-packing}, then the mutually tangent circle which encloses them will have radius \smash{$a_4^{-1} =(3-2\sqrt3)^{-1}$}.

%
%

For a starting configuration of four mutually tangent circles where one has negative curvature (so it encloses the other four), as in the top of Figure~\ref{fig:apollonian-action}, one can use the \emph{Apollonian group} (a subgroup of $S\negsp[3]L_4(\bZ)$ generated by matrices $A_1, A_2, A_3, A_4$) to geometrically obtain the other circles of the packing.   
Beginning with the configuration $a = (a_1, a_2, a_3, a_4)$, one replaces a circle $C_i$ with $C_i'$ (its reflection with respect to the other three) and the new inradius is obtained from the corresponding matrix multiplication. For example, swapping the first circle $C_1$ with its reflection $C_1'$ yields $a' = (a_1', a_2, a_3, a_4) = (a_1, a_2, a_3, a_4) A_1$, where $1/a_1'$ is the inradius of the new circle $C_1'$. 

  Inserting the derived expressions for $v_k(\ge)$ into
  (\ref{eqn:tail-term2}), we obtain
  \linenopax
  \begin{align*}
    V_{\mathrm{tail}}(\tiling,\ge)
    &= \frac{1}{2\gp \ii} \sum_{k=0}^{d-1} \crv_k(\gen) \int_{c-\ii \iy}^{c+\ii \iy}
       \ge^{d-s} \gzL(s) \frac{\genir^{s-k} (d-k)}{(s-k)(d-s)}ds +  \crv_d(G)\gzL(d)\\
    &= \frac{1}{2\gp \ii} \int_{c-\ii \iy}^{c+\ii \iy}
       \frac{\ge^{d-s} \gzL(s)}{d-s}\sum_{k=0}^{d-1} \frac{\genir^{s-k}}{s-k} (d-k)\crv_k(\gen) ds +  \crv_d(G)\gzL(d) \\
    &= \frac{1}{2\gp \ii} \int_{c-\ii \iy}^{c+\ii \iy}
       \gzT[\tail](\ge,s) \, ds +  \crv_d(G)\gzL(d),
  \end{align*}
  by \eqref{eqn:volume-zeta-split2}.
  Since $d$ is not a pole of $\gzL$, it is a simple pole of $\gzT[\tail](\ge,\mydot)$ 
  and \eqref{eqn:res(gzT,d)} yields
  \linenopax
  \begin{align}\label{eqn:tail-residue-at-d}
    \res[s=d]{\gzT[\tail](\ge,s)}=\gzL(d)(\gk_d(G)-\gl_d(G)).
  \end{align}
  Therefore, we obtain
  \linenopax
  \begin{align*}
    V_{\mathrm{tail}}(\tiling,\ge)
    &= \frac{1}{2\gp \ii} \int_{c-\ii \iy}^{c+\ii \iy} \gzT[\tail](\ge,s) ds +  \res[s=d]{\gzT[\tail](\ge,s)}+\gzL(d)\gl_d(G).
  \end{align*}

  Now the machinery of the Residue Theorem can be applied.
  When pushing the line of integration towards the screen $S$, we
  collect on the way the residues of the poles of
  $\gzT[\tail](\ge,\mydot)$ that lie between the line $\Re s=c$
  and $S$ (see the proof of \cite[Theorem~8.7]{FGCD}). The
  definition of the screen $S$ and the window $W$ imply that
  \gzL is meromorphic in $W$ and, since $\abscissa < c$,
  \gzL has no poles to the right of the vertical line $\Re
  s=c$. Therefore, by Theorem~\ref{thm:Meromorphic-continuation-of-gzT}, any pole
  of $\gzT[\tail](\ge,\mydot)$ in the region between $\Re s=c$
  and $S$ is either contained in $\{0, 1, \dots, d-1\}$ or a pole
  of \gzL in $W$, i.e., an element of $\sD_{\tiling}(W)\setminus\{d\}$.
  Recall that $\gzT[\tail](\ge,\mydot)$ has another pole at
  $d$ but, since $d$ is not passed when pushing the line of
  integration towards the screen, it does not occur again.

  At this point, the languidness of \gzL comes into play.
  Using the sequence $\{T_n\}_{n\in\bZ}$ of Definition~\ref{def:languid}, we write
  $V_{\mathrm{tail}}(\tiling,\ge)$ as a limit of truncated
  integrals:
  \linenopax
  \begin{equation} \label{eqn:V-tail-limit}
    V_{\mathrm{tail}}(\tiling,\ge)
    = \lim_{n\to\iy} \frac{1}{2\gp \ii} \int_{c+\ii T_{-n}}^{c+\ii T_n}
      \gzT[\tail](\ge,s) ds +  \res[s=d]{\gzT[\tail](\ge,s)}+ \gzL(d)\gl_d(G).
  \end{equation}
  If we now replace the vertical line segment $\sC_{|n}:=[c+\ii T_{-n},c+\ii T_n]$
  of integration by the curve given by the union of the two
  horizontal line segments and the truncated screen $S_{|n}$, that is,
  \linenopax
  \begin{align*}
    U_{|n} &:= [c+\ii T_n, S(T_n)+\ii T_n], \\
    \genstr_{|n} &:= [c+\ii T_{-n}, S(T_{-n})+\ii T_{-n}], \q\text{and} \\
    S_{|n} &:= \{S(t)+\ii t: t\in [T_{-n},T_n]\}
  \end{align*}
  with proper orientations, the Residue Theorem implies that
  the \nth integral in \eqref{eqn:V-tail-limit} is equal to
  \linenopax
  \begin{equation*}
    \sum_{\pole\in\sD(W_{|n})} \res[s=\pole]{\gzT[\tail](\ge,s)}+ \sR_n(\ge) +
    U_n^\integral(\ge) + L^\integral_n(\ge),
  \end{equation*}
  where $\sD(W_{|n}):=\sD_{\tiling}(W)\cap W_{|n}$ is the set of
  (possible) poles of $\gzT[\tail](\ge,\mydot)$ that lie inside
  the region $W_{|n}$ bounded by the curves $U_{|n}, S_{|n}, \genstr_{|n}$ and
  $\sC_{|n}$. Hence, $W_{|n}$ is the ``truncated window'' associated to the truncated screen $S_{|n}$. The term $\sR_n(\ge)$ is given by the integral
  \linenopax
  \begin{equation} \label{error1}
    \sR_n(\ge)= \frac{1}{2\gp \ii} \int_{S_{|n}} \gzT[\tail](\ge,s) \, ds
  \end{equation}
  and $U_n^\integral(\ge)$ and $\genstr_n^\integral(\ge)$ are the corresponding integrals
  over the segments $U_{|n}$ and $\genstr_{|n}$, respectively (traversed clockwise around $W_{|n}$).
  More precisely,
  \smash{$U_n^\integral(\ge)$} is given by
  \linenopax
  \begin{align*}
    U_n^\integral(\ge)
    &= \frac{1}{2\gp \ii} \int_{S(T_n)+\ii T_n}^{c+\ii T_n} \gzT[\tail](\ge,s) \,ds\\
    &= \frac{1}{2\gp \ii} \int_{S(T_n)}^{c} \ge^{d-t-\ii T_n} \gzL(t+\ii T_n)
       \sum_{k=0}^{d-1} \frac{\genir^{t+\ii T_n-k} \crv_k(\gen)(d-k) }{(t-k+\ii T_n)(d-t-\ii T_n)} dt
  \end{align*}
  and is absolutely bounded as follows:
  \linenopax
  \begin{align*}
    |U_n^\integral(\ge)|
    &\leq \frac{1}{2\gp} \int_{S(T_n)}^{c} \ge^{d-t} |\gzL(t+\ii T_n)|
      \sum_{k=0}^{d-1} \frac{|\genir^{t-k} \crv_k(\gen)|(d-k) }{|t-k+\ii T_n||d-t-\ii T_n|} dt.
  \end{align*}
  According to the languidness condition \textbf{\ref{eqn:L1}} of Definition~\ref{def:languid} and the hypotheses of the first part of Theorem~\ref{thm:pointwise-tube-formula},
  there exist constants $C>0$ and
  $\languidOrder<1$, such that $|\gzL(t+\ii T_n)|\le C (T_n +1)^{\languidOrder}$.
  Moreover, $|t-k+\ii T_n|\geq T_n$, for all $k=0,\dots,d-1$ and,
  similarly, $|d-t-\ii T_n|\geq T_n$. Hence we get
  \linenopax
  \begin{equation} \label{u-n1}
    |U_n^\integral(\ge)| \le \frac{1}{2\gp} C (T_n +1)^{\languidOrder} \sum_{k=0}^{d-1}
  \frac{|\crv_k(\gen)|(d-k) }{T_n^2} \int_{S(T_n)}^{c} \genir^{t-k} \ge^{d-t} dt.
  \end{equation}
  Since $S(T_n) \geq \inf S$, the integral in this expression is
  bounded by a constant independent of $n$. Thus, there is a
  constant $C_1>0$, independent of $n$, such that
  \linenopax
  \begin{equation}\label{u-n2}
    |U_n^\integral(\ge)| \le C_1 \frac{(T_n +1)^{\languidOrder}}{T_n^2}.
  \end{equation}
  With similar arguments, one can show that the integral
  $\genstr_n^\integral(\ge)$ is absolutely bounded by $C_2 |T_{-n}|^{-2}(|T_{-n}|
  +1)^{\languidOrder}$, for some constant $C_2>0$ independent of $n$. If we
  now take limits as $n \to \iy$, then $T_n \to \iy$ and $T_{-n} \to -\iy$.
  Since $\languidOrder < 1$, this implies \smash{$|U_n^\integral(\ge)|$} and \smash{$|\genstr_n^\integral(\ge)|$} tend to $0$ as $n \to \iy$.

\subsubsection{Estimating the error term } \label{subsec:proof-error-term}
%
  To complete the proof of formula \eqref{eqn:tail-term-assertion}, it remains
  to show that the limit $\R_{\ta}(\ge):=\lim_{n\to\iy} \sR_n(\ge)$
  exists and satisfies the asymptotic estimate $\R_{\ta}(\ge)=O(\ge^{d-\sup S})$ as $\ge\to 0^+$, for which we utilize assumption \textbf{\ref{eqn:L2}} of Definition~\ref{def:languid}. Recall from \eqref{error1} and \eqref{eqn:volume-zeta-split2} that the integral $\R_n(\ge)$ is given by
  \linenopax
  \begin{align*}
    \R_n(\ge) &= \frac{1}{2\gp \ii} \int_{S_{|n}} \ge^{d-s} \gzL(s)
                \sum_{k=0}^{d-1} \frac{\genir^{s-k} \crv_k(\gen)(d-k) }{(s-k)(d-s)} ds \\
    &= \frac{1}{2\gp \ii} \int_{T_{-n}}^{T_{n}} \ge^{d-S(t)-\ii t}
       \gzL(S(t)+\ii t) \sum_{k=0}^{d-1}
       \frac{\genir^{S(t)+\ii t-k} \crv_k(\gen)(d-k) }{(S(t)+\ii t-k)(d-S(t)-\ii t)}(S'(t)+i) dt,
  \end{align*}
  where $S'(t)$ denotes the derivative of $S$ at $t$. Note that,
  since $S$ was assumed in Definition~\ref{def:screen} to be Lipschitz continuous with constant
  $\mathrm{Lip} S$, $S'(t)$ exists for almost all $t\in\bR$ and
  $|S'(t)|\le \mathrm{Lip} S$ at those points. Hence the integral
  above is well defined and absolutely integrable, which is seen as follows:
  \linenopax
  \begin{align} \label{eqn:Rn-abs}
    &\frac{1}{2\gp } \int_{T_{-n}}^{T_{n}} \ge^{d-S(t)} |\gzL(S(t)+\ii t)|
      \sum_{k=0}^{d-1} \frac{|\genir^{S(t)+\ii t-k} \crv_k(\gen)|(d-k) }{|S(t)-k +\ii t||d-S(t)+\ii t|}|S'(t)+i| dt \notag \\
    &\hstr[6]\leq \frac{M(\ge)(1+\mathrm{Lip} S)}{2\gp }
      \sum_{k=0}^{d-1} |\crv_k(\gen)|(d-k)   \int_{T_{-n}}^{T_{n}}
      \frac{|\gzL(S(t)+\ii t)| \, dt}{|S(t)-k +\ii t||d-S(t)+\ii t|},
  \end{align}
  where the number $M(\ge)$, defined by 
  \linenopax
  \begin{align*}
    M(\ge) 
    = \max\{\ge^{d-\sup S}, \ge^{d-\inf S}\} \cdot
      \max\{\genir^{\sup S},\genir^{\inf S}\} \cdot
      \max\{1,\genir^{-d}\}, 
  \end{align*}
  is a uniform upper bound (in $t$) for the term $\ge^{d-S(t)} \genir^{S(t)-k}$, for $k=0,\dots,d$.
  Now we use the languidness assumption \textbf{\ref{eqn:L2}}, which states
  that there exist constants $C>0$ and $\languidOrder<1$ such that
  $|\gz_s(S(t)+\ii t)|\le C |t|^{\languidOrder}$ for all $|t|\geq 1$.
  Observe that, since the screen $S$ avoids the poles of
  \gzL, the expression $|\gz_s(S(t)+\ii t)|$ is bounded on
  any finite interval for $t$. Therefore, \textbf{\ref{eqn:L2}} is equivalent
  to assuming that there are $C_1>0$ and $\languidOrder<1$ such that
  $|\gz_s(S(t)+\ii t)|\le C_1 |t|^{\languidOrder}$ for all $|t|\geq t_0$,
  where $t_0$ is some arbitrary but fixed positive constant.
  (Simply choose $C_1$ sufficiently large.) 
  Next, we describe how to choose $t_0$.
  Since the screen $S$ is assumed to be Lipschitz
  continuous and to avoid the numbers $\{0,\dots,d\}$ when
  passing the real axis, one can find positive constants $t_0$ and $r_0$
  such that $|k-S(t)|\geq r_0$ for all $|t|\le t_0$ and
  $k=0,\dots,d$. (That is, in a tube of width $t_0$ around the
  real axis, the screen $S$ has at least distance $r_0$ to any of
  the lines $\Re s =k$, for $k=0,\dots, d$.)

  Now, for the remaining integrals in the above expression (and
  $n$ sufficiently large), we split the interval of integration
  $(T_{-n},T_n)$ into $(T_{-n}, -t_0)\cup (-t_0, t_0)\cup (t_0,
  T_n)$. In the first and the third intervals, we use (the
  modified) condition \textbf{\ref{eqn:L2}} and, furthermore, that $|d-S(t)+\ii
  t|\geq |t|$ and $|S(t)-k +\ii t|\geq |t|$  to see that, for
  $k=0,1,\dots, d-1$,
  \linenopax
  \begin{align*}
    \int_{T_{-n}}^{-t_0}\frac{|\gzL(S(t)+\ii t)|}{|S(t)-k +\ii t||d-S(t)+\ii t|}  dt
    &\leq C_1 \int_{T_{-n}}^{-t_0} |t|^{\languidOrder-2} dt
    = \frac{C_1}{\languidOrder -1} \left(|T_{-n}|^{\languidOrder-1}-t_0^{\languidOrder-1}\right)
  \end{align*}
  and, similarly, that the \kth integral over the interval
  $(t_0,T_n)$ is bounded by the constant $\frac{C_1}{\languidOrder -1}
  \left(T_{n}^{\languidOrder-1}-t_0^{\languidOrder-1} \right)$.

  In the interval $(-t_0, t_0)$, $|\gzL(S(t)+\ii t)|$ is
  bounded by a constant, say $M$, $|S(t)-k +\ii t|\geq
  |S(t)-k|\geq r_0$ and, similarly,  $|d-S(t)+\ii t|\geq|d-S(t)|\geq r_0$. Therefore, for $k=0,1,\dots,d-1$,
  \linenopax
  \begin{align*}
    \int_{-t_0}^{t_0}\frac{|\gzL(S(t)+\ii t)|}{|S(t)-k +\ii t||d-S(t)+\ii t|} dt
    &\leq \frac{2M t_0}{r_0^{2}}
    =: C_2.
  \end{align*}
  Observe that the derived estimates for the \kth integrals are
  independent of $k$. Thus, putting the pieces back together, we
  have that \eqref{eqn:Rn-abs} is bounded above by
  \linenopax
  \begin{align} \label{eqn:Rn-abs2}
    &C(\ge)  \left(\frac{C_1}{1 - \languidOrder} \left(2 t_0^{\languidOrder-1} - T_n^{\languidOrder-1} - |T_{-n}|^{\languidOrder-1}\right) + C_2\right),
  \end{align}
  where $C(\ge):= \frac{M(\ge)(1+\mathrm{Lip} S)}{2\gp}
  \sum_{k=0}^{d-1} |\crv_k(\gen)|(d-k) $.
  Consequently, $\sR_n(\ge)$ is absolutely integrable for each $n$ and $\ge>0$. Moreover, since \eqref{eqn:Rn-abs2} converges to some finite value as $n \to \iy$ (because $\languidOrder <1$), it follows that also $\sR_{\ta}(\ge)$ is absolutely integrable and thus integrable; i.e., $\sR_{\ta}(\ge)$ is finite for each $\ge>0$. Hence, the error term $\sR_{\ta}(\ge)$ is given as claimed in \eqref{eqn:pointwise-error-head-tail}.
Finally, note that $W_{|n}\to W\cap \{\Re s< c\}$ and
  $\sD_{\tiling}(W)\cap \{\Re s\geq c\}=\{d\}$ imply
  $\sD(W_{|n})\to \sD_{\tiling}(W)\setminus\{d\}$. This completes the proof of
  formula (\ref{eqn:tail-term-assertion}).
  
  Furthermore, from (\ref{eqn:Rn-abs2}) and the definition of $M(\ge)$ (see the discussion following (\ref{eqn:Rn-abs})), it is clear that there is a constant
  $\hat{C}>0$ such that $|\sR_{\ta}(\ge)|\le \hat{C} \ge^{d-\sup S}$ for
  all $0<\ge<\genir$; i.e., $\sR_{\ta}(\ge)$ is of order $O(\ge^{d-\sup S})$
  as $\ge\to 0^+$. 
  Recalling that $\R=\R_{\ta}$, this completes the proof of the languid case in  Theorem~\ref{thm:pointwise-tube-formula}. 

  \subsubsection{The strongly languid case}
  \label{subsec:proof-strongly-languid}
  Now assume that \gzL is strongly languid of order $\languidOrder < 2$, as in Definition~\ref{def:strongly-languid} and the second part of Theorem~\ref{thm:pointwise-tube-formula}.
  Then there exists a sequence $S_m$ of screens and corresponding
  windows $W_m$ with $\sup S_m\to -\iy$ as $m\to\iy$ such that
  \textbf{\ref{eqn:L1}} and \textbf{\ref{eqn:L2'}} are satisfied for each $m$ (with
  constants $C,A>0$ independent of $m$). In addition, we may assume without loss of generality that $\sup S_m < S(0)$ for all $m \geq 1$ (see the discussion preceding Corollary~\ref{thm:fractal-spray-tube-formula-monophase}). 
  For each screen $S_m$
  and for fixed $n\in\bN$, consider the truncated screen $S_{m|n}$
  (truncated at $T_{-n}$ and $T_n$) and the corresponding
  truncated window $W_{m|n}$ bounded from above and below by the
  horizontal lines $\Im s=T_n$ and $\Im s= T_{-n}$ and from the
  right by the line $\Re s=c$. By the Residue Theorem, for each $m$ and
  $n$, the \nth integral in the sequence of truncated integrals
  in (the counterpart of) \eqref{eqn:V-tail-limit} is given by
  \linenopax
  \[
  \sum_{\pole\in\sD(W_{m|n})} \res[s=\pole]{\gzT[\tail](\ge,s)}+
  \sR_{m|n}(\ge) + U^\integral_{m|n}(\ge) + L^\integral_{m|n}(\ge),
  \]
  just as in the languid case, and the integrals \smash{$U^\integral_{m|n}(\ge)$   and $L^\integral_{m|n}(\ge)$} over the horizontal line segments are similar to \smash{$U_n^\integral(\ge)$} and \smash{$L^\integral_n(\ge)$} above, with $S$ replaced by $S_m$. First we keep $n$ fixed and show that $\sR_{m|n}(\ge)$ vanishes as $m\to\iy$.
  Note that $\sR_{m|n}(\ge)$ is given by the same expression as $\sR_n(\ge)$ in (\ref{error1}), except that the integral is now over $S_{m|n}$ instead of $S_{|n}$. Its absolute value is bounded by
  \linenopax
  \begin{align*} \label{eqn:Rmn-abs}
    \frac{1}{2\gp } \int_{T_{-n}}^{T_{n}} \ge^{d-S_m(t)} |\gzL(S_m(t)+\ii t)|
      \sum_{k=0}^{d-1} \frac{|\genir^{S_m(t)-k +\ii t} \crv_k(\gen)|(d-k) }{|S_m(t)-k +\ii t||d-S_m(t)+\ii t|}|S_m'(t)+i|  dt \\
    \leq \frac{B+1}{2\gp }\sum_{k=0}^{d-1} |\crv_k(\gen)|(d-k) 
      \int_{T_{-n}}^{T_{n}} \ge^{d-S_m(t)} \genir^{S_m(t)-k} \frac{|\gzL(S_m(t)+\ii t)|}{|t|^2}  dt,
  \end{align*}
  where we used the inequality $|S_m(t)-k +\ii t||d-S_m(t)+\ii
  t|\geq |t|^2$. Moreover, we utilized that, since the functions
  $S_m$ are assumed to be Lipschitz continuous with a uniform
  Lipschitz bound $B=\sup_m \mathrm{Lip} S_m<\iy$, the inequality
  $|S_m'(t)+i|\le B+1$ holds, whenever $S_m'(t)$ is defined
  (which is the case for almost all $t \in \bR$, independently of $m$).
  Now, by \textbf{\ref{eqn:L2'}} of Definition~\ref{def:strongly-languid}, there are constants $A,C>0$, independent of $n$ and $m$, such that, for
  all $t\in\bR$ and all $m\in\bN$, $|\gzL(S_m(t)+\ii t)|\le C
  A^{|S_m(t)|} (|t|+1)^\languidOrder$. Therefore, there exists a constant
  $C_1$, independent of $n$ and $m$, such that
  \linenopax
  \begin{align*}
    |\sR_{m|n}(\ge)|&\leq C_1
  \int_{T_{-n}}^{T_{n}} \left(\frac\ge\genir\right)^{-S_m(t)}A^{|S_m(t)|}\frac{(|t|+1)^\languidOrder}{|t|^2}
  dt.
  \end{align*}
  For $m$ sufficiently large (indeed, without loss of generality, for all $m \geq 1$), we have $S_m(t)<0$ and so
  $-S_m(t)=|S_m(t)|$. Thus, provided that $\ge < A^{-1} \genir$, we can
  bound the expression $(\ge/\genir)^{-S_m(t)} A^{|S_m(t)|}=(\ge
  A/\genir)^{|S_m(t)|}$ from above by $\left(\ge A/\genir\right)^{|\sup S_m|}$, which is
  independent of $t$ and can thus be taken out of the integral.
  The remaining integral has a finite value for each $n$. Letting
  now $m\to\iy$, $|\sup S_m|\to \iy$ and so $|\sR_{m|n}(\ge)|$
  vanishes.

  When taking the limit as $m\to \iy$, the expression
  \smash{$U^\integral_{m|n}(\ge)$} extends to an integral over the whole half-line
  $(-\iy+\ii T_n, c+\ii T_n]$ and \smash{$L^\integral_{m|n}(\ge)$} to an integral
  over $(-\iy+iT_{-n}, c+\ii T_{-n}]$. More precisely,
  \smash{$U^\integral_{|n}(\ge) := 
  \lim_{m\to\iy} U^\integral_{m|n}(\ge)$} is given by
  \linenopax
  \begin{align*}
    U^\integral_{|n}(\ge)
    &= \frac{1}{2\gp \ii} \int_{-\iy +\ii T_n}^{c+\ii T_n} \gzT[\tail](\ge,s) ds\\
    &= \frac{1}{2\gp \ii} \int_{-\iy}^{c} \ge^{d-t-\ii T_n} \gzL(t+\ii T_n)
      \sum_{k=0}^{d-1} \frac{\genir^{t-k+\ii T_n} \crv_k(\gen)(d-k) }{(t-k+\ii T_n)(d-t-\ii T_n)} dt.
  \end{align*}
  By exploiting the languidness condition \textbf{\ref{eqn:L1}} (which now
  holds for all $t \in \bR$) and the inequalities  $|t-k+\ii T_n| \geq T_n$
  (for $k=0,\dots,d-1$) and $|d-t-\ii T_n| \geq T_n$, it is easily
  seen that there exists some constant $C_2 > 0$, independent of $n$ and $m$, such that $U^\integral_{|n}(\ge)$ is absolutely bounded as follows:
  \linenopax
  \begin{equation} \label{eqn:u-n-L2}
    \left|U^\integral_{|n}(\ge) \right|
    \leq C_2 \frac{(T_n +1)^\languidOrder}{T_n^2} \int_{-\iy}^{c} \left(\frac\ge\genir\right)^{-t} dt.
  \end{equation}
  The remaining integral is finite, provided that $\ge < \genir$. Now,
  as $n \to \iy$, \smash{$|U^\integral_{|n}(\ge)|$} vanishes, for each $\ge < \genir$, and
  with completely analogous arguments, the same can be shown for
  \smash{$|U^\integral_{|n}(\ge)|$}. Hence the tail volume is given in the strongly languid case by
  \linenopax
  \begin{equation}
    V_{\mathrm{tail}}(\tiling,\ge)
    = \sum_{\pole\in\sD_{\tiling}} \res[s=\pole]{\gzT[\tail](\ge,s)};
  \end{equation}
  i.e., equation \eqref{eqn:tail-term-assertion} holds without error term for $\ge<\min\{\genir, A^{-1} \genir\}$. 
  This completes the proof for the strongly languid case, and thus of all of Theorem~\ref{thm:pointwise-tube-formula}. 

\subsubsection{Proof of Corollary~\ref{thm:fractal-spray-tube-formula-monophase}}
\label{sec:proof-of-cor-monophase}
  In the monophase case, we have $f_k(\ge) = 0$ for all $\ge > 0$ and for each $k=0,1,\dots,d$. Therefore, $\gzT[\head]$ (given in \eqref{eqn:gzThead}) vanishes identically, implying $\R_{\he}\equiv 0$ (by \eqref{eqn:pointwise-error-head}) and thus $V_{\mathrm{head}}(\tiling,\ge)=0$ for each $\ge>0$ (by~\eqref{eqn:head-term-assertion}). Consequently, by \eqref{eqn:def:V-tail}, $V(\tiling,\mydot)=V_{\mathrm{tail}}(\tiling,\mydot)$.  Since $\R_{\ta}(\ge)=\R(\ge)$, cf.~\eqref{eqn:pointwise-error} and \eqref{eqn:pointwise-error-head-tail}, the assertion of Corollary~\ref{thm:fractal-spray-tube-formula-monophase} follows by observing that the assumption $S(0)<0$ is not used in the proof of \eqref{eqn:tail-term-assertion} given in Section~ \ref{subsec:proof-tail} and Section~ \ref{subsec:proof-error-term}. It is only used to ensure that the screen $S$ avoids the integer dimensions $0,1,\ldots,d$. Note that $\gzT[\head]\equiv 0$ also implies $\gzT=\gzT[\tail]$. Hence, the error term $\R(\ge)$ is equivalently given by the integral \eqref{eqn:pointwise-error2} in this case, as explained in Remark~\ref{rem:error-term}.    
\hfill $\square$

\subsection{Proof of the fractal tube formula, Corollary~\ref{thm:ptwise-result-self-similar-case-simplified}}
\label{sec:proof-of-fractal-tube-formula}
\hfill \\
Before proceeding, we need to compute some residues. To this end, we introduce the tubular zeta function for the generator \gen. In addition to being a useful technical device, it reveals the structure of the residues of the tubular zeta function \gzT.

\begin{defn}\label{def:gzG}
  Let $\gzG(\ge,s)$ denote the \emph{tubular zeta function of the generator} \gen, where \gen is assumed to have a Steiner-like representation as in \eqref{eqn:def-prelim-Steiner-like-formula}. 
It is defined exactly as $\gzT(\ge,s)$, except that the associated fractal string is given by $\{\genstr_j\}_{j=1}^\iy$ with $\genstr_1 = 1$ and $\genstr_j = 0$ for all $j \geq 2$. In other words, it is the tubular zeta function of the trivial fractal spray with generator \gen.
\end{defn}

Exactly as in \eqref{eqn:volume-zeta-split2}, we write $\gzG = \gzG[\head] + \gzG[\tail]$, so that for $s \in \bC$, we have
  \linenopax
  \begin{align}\label{eqn:gzG[head]}
    \gzG[\head](\ge,s) = 
    \begin{cases}
      \ge^{d-s} \sum_{k=0}^d \frac{\genir^{s-k}}{s-k} f_k(\ge),
      & 0 < \ge \leq \genir, \\
      0, & \ge \geq \genir,
    \end{cases}
  \end{align}
  with $f_k(\ge) = \crv_k(\gen,\ge) - \crv_k(\gen)$ defined as in \eqref{eqn:def:f_k} for $k=0,1,\dots,d$, and
  \linenopax
  \begin{align}\label{eqn:gzG[tail]}
    \gzG[\tail](\ge,s) = \frac{\ge^{d-s}}{d-s} M_s(\gen),
    \qq \ge > 0,
  \end{align}
  where
  \linenopax
  \begin{align}\label{eqn:M_s(G)}
    M_s(\gen) := \sum_{k=0}^{d-1} \frac{\genir^{s-k}}{s-k} (d-k)\crv_k(\gen).
  \end{align}
  To see why the second case of \eqref{eqn:gzG[head]} should be true, consider the definition of \gzG[\head] in the counterpart of \eqref{eqn:volume-zeta-split2} and suppose we define 
  \linenopax
  \begin{align}\label{eqn:J_G(e)}
    J_G(\ge) := \charfn{(0,\genir)}(\ge),
  \end{align}
  which is in parallel to \eqref{eqn:J(eps)}, upon inspection.
   
  Observe that for every $\ge > 0$, $\gzG(\ge,\mydot)$ is meromorphic in all of \bC, with poles contained in $\{0,1,\dots,d\}$. Hence, the set of ``complex dimensions'' of \gen consists of the integer dimensions $\{0,1,\dots,d\}$, and all of these poles are simple.
  
\begin{lemma}[Residues of \gzG]
  \label{thm:gzG-residues}
  For $0 < \ge \leq \genir$, we have the following residues of \gzG:
  \linenopax
  \begin{align}
    &\res[s=k]{\gzG[\head](\ge,s)} = \ge^{d-k} f_k(\ge),
      && k=0,1,\dots,d, 
      \label{eqn:gzG[head]-residue-at-k} \\
    &\res[s=k]{\gzG[\tail](\ge,s)} = \ge^{d-k} \crv_k(\gen),
      && k=0,1,\dots,d-1, 
      \label{eqn:gzG[tail]-residue-at-k} \\
    &\res[s=k]{\gzG[\tail](\ge,s)} = \crv_d(\gen) - \gl_d(\gen),&& k=d.
      \label{eqn:gzG[tail]-residue-at-d}
  \end{align}
  \begin{proof}
    In light of \eqref{eqn:gzG[head]}--\eqref{eqn:M_s(G)}, each of $\gzG[\head](\ge,\mydot)$, $\gzG[\tail](\ge,\mydot)$ and $\gzG(\ge,\mydot)$ is meromorphic in all of \bC, with (simple) poles contained in $\{0,1,\dots,d\}$, for $0 < \ge \leq \genir$. To show \eqref{eqn:gzG[tail]-residue-at-d}, simply use \eqref{eqn:gzG[tail]} and \eqref{eqn:M_s(G)} to compute
    \linenopax
    \begin{align*}
      \res[s=d]{\gzG[\tail](\ge,s)}
      &= \lim_{s \to d} \, (s-d) \gzG[\tail](\ge,s) 
      = - \sum_{k=0}^{d-1} \genir^{d-k} \crv_k(\gen) 
      = \crv_d(\gen) - \gl_d(\gen),
    \end{align*}
    using \eqref{eqn:leb-coeff-rel} to reach the last equality. 
  \end{proof}
\end{lemma}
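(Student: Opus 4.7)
The plan is to compute each residue by isolating the relevant simple-pole factor $\frac{1}{s-k}$ in the explicit expression for $\gzG[\head](\ge,s)$ or $\gzG[\tail](\ge,s)$ and evaluating the remaining analytic factor at $s=k$. As noted after \eqref{eqn:gzG[head]}--\eqref{eqn:M_s(G)}, both $\gzG[\head](\ge,\mydot)$ and $\gzG[\tail](\ge,\mydot)$ are meromorphic on all of $\bC$ with at worst simple poles contained in $\{0,1,\dots,d\}$ (for any fixed $\ge$ in the relevant range), so no higher-order Laurent computation is required.

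For \eqref{eqn:gzG[head]-residue-at-k}, fix $k \in \{0,1,\dots,d\}$ and split the sum in \eqref{eqn:gzG[head]} as the $j=k$ term plus the sum over $j \neq k$. The latter sum is holomorphic at $s=k$ and contributes nothing to the residue, while the $j=k$ term gives
\begin{align*}
  \res[s=k]{\ge^{d-s} \tfrac{\genir^{s-k}}{s-k} f_k(\ge)}
  = \ge^{d-k} \genir^{0} f_k(\ge) = \ge^{d-k} f_k(\ge),
\end{align*}
using the fact that $f_k(\ge)$ is constant in $s$ and $\ge^{d-s}\genir^{s-k}$ is entire in $s$.

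For \eqref{eqn:gzG[tail]-residue-at-k}, fix $k \in \{0,1,\dots,d-1\}$ and note that the prefactor $\frac{\ge^{d-s}}{d-s}$ in \eqref{eqn:gzG[tail]} is holomorphic at $s=k$ (since $k \neq d$). Substituting the definition \eqref{eqn:M_s(G)} of $M_s(\gen)$ and extracting the $j=k$ term (again the $j \neq k$ terms contribute nothing), one obtains
\begin{align*}
  \res[s=k]{\gzG[\tail](\ge,s)}
  = \frac{\ge^{d-k}}{d-k} \cdot \genir^{0} \cdot (d-k)\crv_k(\gen)
  = \ge^{d-k} \crv_k(\gen),
\end{align*}
which establishes the claim. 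The case $k=d$ of \eqref{eqn:gzG[tail]-residue-at-d} is already handled in the proof provided, using \eqref{eqn:leb-coeff-rel} to identify $-\sum_{k=0}^{d-1}\genir^{d-k}\crv_k(\gen) = \crv_d(\gen)-\gl_d(\gen)$. No obstacles are anticipated; the computation is entirely mechanical once the simple-pole structure is observed.
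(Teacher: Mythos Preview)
Your proof is correct and takes essentially the same approach as the paper: both rely on the observation that $\gzG[\head](\ge,\mydot)$ and $\gzG[\tail](\ge,\mydot)$ have only simple poles in $\{0,1,\dots,d\}$, so each residue is obtained by isolating the unique $\frac{1}{s-k}$ factor and evaluating the remaining holomorphic part at $s=k$. The paper only writes out the $k=d$ case of $\gzG[\tail]$ explicitly (invoking \eqref{eqn:leb-coeff-rel}) and leaves \eqref{eqn:gzG[head]-residue-at-k} and \eqref{eqn:gzG[tail]-residue-at-k} implicit; you have simply supplied those routine details.
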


The following result will not be used in the sequel but may be helpful for the reader; it provides a ``residue formulation'' of the given Steiner-like representation of \gen.

\begin{cor}[Exact tube formula for \gen]\label{thm:Exact-tube-formula-for-G}
  For all $\ge \in (0,\genir]$,
  \linenopax
  \begin{align}\label{eqn:residue-tube-formula-for-G}
    V(\gen,\ge) = \sum_{k=0}^d \res[s=k]{\gzG(\ge,s)} + \gl_d(\gen).
  \end{align}
  \begin{proof}
    First, note that it follows from \eqref{eqn:gzG[head]-residue-at-k} and \eqref{eqn:def:f_k} that for $0 < \ge \leq \genir$,
    \linenopax
    \begin{align}
      \sum_{k=0}^d \res[s=k]{\gzG[\head](\ge,s)}
      &= \sum_{k=0}^d \ge^{d-k} \left(\crv_k(\gen,\ge) - \crv_k(\gen)\right) \notag \\
      &= \sum_{k=0}^d \ge^{d-k} \crv_k(\gen,\ge) - \sum_{k=0}^d \ge^{d-k} \crv_k(\gen) \notag \\
      &= V(\gen,\ge) - \sum_{k=0}^{d-1} \ge^{d-k} \crv_k(\gen) - \crv_d(\gen),
      \label{eqn:residue-tube-formula-for-G-derivation1}
    \end{align}
    where we have used \eqref{eqn:def-prelim-Steiner-like-formula} in the last equality. 
    Furthermore, by \eqref{eqn:gzG[tail]-residue-at-k} and \eqref{eqn:gzG[tail]-residue-at-d}, we have for $\ge \in (0,\genir]$,
    \linenopax
    \begin{align}
      \sum_{k=0}^d \res[s=k]{\gzG[\tail](\ge,s)}
      = \sum_{k=0}^{d-1} \ge^{d-k} \crv_k(\gen) + \left(\crv_d(\gen)- \gl_d(\gen)\right).
      \label{eqn:residue-tube-formula-for-G-derivation2}
    \end{align}
    Since $\gzG = \gzG[\head] + \gzG[\tail]$, the result now follows by adding \eqref{eqn:residue-tube-formula-for-G-derivation1} and \eqref{eqn:residue-tube-formula-for-G-derivation2}.
  \end{proof}
\end{cor}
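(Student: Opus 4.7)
The plan is to exploit the head/tail decomposition $\gzG = \gzG[\head] + \gzG[\tail]$ just established in \eqref{eqn:gzG[head]}--\eqref{eqn:gzG[tail]}, so that by linearity of residues the sum $\sum_{k=0}^d \res[s=k]{\gzG(\ge,s)}$ splits into a ``head'' contribution and a ``tail'' contribution. Both have already been computed explicitly in Lemma~\ref{thm:gzG-residues}, so the proof reduces to a direct bookkeeping argument matching the Steiner-like expansion of $V(\gen,\ge)$ against the integer-residue formulas.

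First I would sum the head residues. By \eqref{eqn:gzG[head]-residue-at-k}, each term equals $\ge^{d-k}f_k(\ge) = \ge^{d-k}(\crv_k(\gen,\ge) - \crv_k(\gen))$ for $\ge \in (0,\genir]$. Invoking the Steiner-like representation \eqref{eqn:def-prelim-Steiner-like-formula}, $\sum_{k=0}^d \ge^{d-k}\crv_k(\gen,\ge)$ is exactly $V(\gen,\ge)$, so the total head contribution equals $V(\gen,\ge) - \sum_{k=0}^d \ge^{d-k}\crv_k(\gen)$.

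Next I would sum the tail residues using \eqref{eqn:gzG[tail]-residue-at-k} for $k=0,1,\dots,d-1$ together with the anomalous value at $k=d$ given by \eqref{eqn:gzG[tail]-residue-at-d}. This yields $\sum_{k=0}^{d-1} \ge^{d-k}\crv_k(\gen) + \crv_d(\gen) - \gl_d(\gen)$, which (since $\ge^{d-d}=1$) is simply $\sum_{k=0}^{d} \ge^{d-k}\crv_k(\gen) - \gl_d(\gen)$. Adding the two contributions, the coefficient sums $\sum_{k=0}^d \ge^{d-k}\crv_k(\gen)$ cancel exactly, leaving $V(\gen,\ge) - \gl_d(\gen)$. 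Transferring $\gl_d(\gen)$ to the right-hand side gives the claimed identity \eqref{eqn:residue-tube-formula-for-G}.

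There is no real obstacle here; the result is a consistency check between Definition~\ref{def:volume-zeta-fn} (applied to the trivial spray of Definition~\ref{def:gzG}) and the Steiner-like representation of $V(\gen,\cdot)$. The only subtlety worth flagging is the $-\gl_d(\gen)$ correction at the pole $s=d$, which is precisely the term subtracted in Definition~\ref{def:volume-zeta-fn} and which is responsible for the additive $\gl_d(\gen)$ on the right-hand side of \eqref{eqn:residue-tube-formula-for-G}. The hypothesis $\ge \in (0,\genir]$ plays a dual role: it ensures that the head residue formula \eqref{eqn:gzG[head]-residue-at-k} applies (rather than $\gzG[\head] \equiv 0$ as occurs for $\ge > \genir$), and it is exactly the range on which the Steiner-like formula \eqref{eqn:def-prelim-Steiner-like-formula} is postulated.
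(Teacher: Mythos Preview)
Your proposal is correct and follows essentially the same approach as the paper: both proofs split the residue sum via $\gzG = \gzG[\head] + \gzG[\tail]$, invoke Lemma~\ref{thm:gzG-residues} for the individual residues, and then use the Steiner-like representation \eqref{eqn:def-prelim-Steiner-like-formula} to identify $\sum_{k=0}^d \ge^{d-k}\crv_k(\gen,\ge)$ with $V(\gen,\ge)$, leaving the $-\gl_d(\gen)$ correction from the tail residue at $s=d$.
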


As an alternative proof of Corollary~\ref{thm:Exact-tube-formula-for-G}, one can  obtain \eqref{eqn:residue-tube-formula-for-G} by applying the second part of Theorem~\ref{thm:pointwise-tube-formula} to the trivial fractal spray on \gen. However, we feel that the proof given above is more edifying and more straightforward.

\subsubsection{The residues of \gzT}
\label{sec:residues-of-gzT}

Let \tiling be a self-similar tiling with a fractal string $\sL = \{\scale_j\}_{j=1}^\iy$ and a single generator \gen for which a Steiner-like representation has been fixed. Let $\gzT = \gzT(\ge,s)$ denote the tubular zeta function of \tiling, and let $\gzT = \gzT[\head] + \gzT[\tail]$ be its head--tail decomposition, as in Section~\ref{sec:splitting}. In light of \eqref{eqn:volume-zeta-split2}, we deduce from \eqref{eqn:gzG[tail]} and \eqref{eqn:M_s(G)} that $\gzT[\tail]$ factors as follows:
  \linenopax
  \begin{align}\label{eqn:gzT[tail]-factored}
    \gzT[\tail](\ge,s) = \gzG[\tail](\ge,s) \gzL(s).
  \end{align}
Furthermore, still by \eqref{eqn:volume-zeta-split2}, 
  \linenopax
  \begin{align}\label{eqn:gzT[head]-notfactored}
    \gzT[\head](\ge,s)
    = \ge^{d-s} \sum_{k=0}^d \frac{\genir^{s-k}}{s-k} 
      \sum_{j=1}^{J(\ge)} \scale_j^s f_k(\scale_j^{-1} \ge),
  \end{align}
  with $f_k$ as in \eqref{eqn:def:f_k} and $J(\ge)$ as in \eqref{eqn:J(eps)}, as usual. 
  Recall that $J(\ge) \to \iy$ monotonically as $\ge \to 0^+$, since $\scale_j$ decreases monotonically to $0$ as $j \to \iy$.
  
\begin{remark}\label{rem:gzT[head]-notfactored}
  Observe that \eqref{eqn:gzT[head]-notfactored} is not at all the counterpart of the factorization given in \eqref{eqn:gzT[tail]-factored}. Indeed, it clearly does not enable us to write $\gzT[\head]$ as the product of $\gzG[\head]$ and \gzL (which would be false). This is the source of some difficulty if we wish to estimate the residues of $\gzT[\head](\ge,s)$ as $\ge \to 0^+$. 
\end{remark}
  

\begin{lemma}[Residues of \gzT]
  \label{thm:residues-of-gzT}
  Fix $\ge \in (0,\gen]$. Then:
  \begin{enumerate}
    \item[\emph{(i)}] When $\pole \in \sD_\sL \less \{0,1,\dots,d\}$ is a simple pole of \gzL, the residue $\res{\gzT(\ge,s)}$ is given by
      \linenopax
      \begin{align}\label{eqn:residues-of-gzT(i)}
        \gzG[\tail](\ge,\pole) \res{\gzL(s)}
        = \frac{\ge^{d-\pole}}{d-\pole} \res{\gzL(s)} M_\pole(\gen),
      \end{align}
      with $M_\pole(\gen) = \sum_{k=0}^{d-1} \frac{\genir^{\pole-k}}{\pole-k}(d-k)\crv_k(\gen)$ as in \eqref{eqn:M_s(G)}.
    \item[\emph{(ii)}] For $\pole = k \in \{0,1,\dots,d\} \less \sD_\sL$,%
    \footnote{\label{foot}Note that if \tiling is a self-similar tiling, then the only real pole of \gzL is $\abscissa = \abscissa_\sL < d$. Hence, the only way that \pole could belong to both $\sD_\sL$ and $\{0,1,\dots,d\}$ would be if $\pole = \abscissa = k$, for some $k \in \{0,1,\dots,d-1\}$.} 
    the residue $\res[s=k]{\gzT[\tail](\ge,s)}$ is given by 
      \linenopax
      \begin{align}\label{eqn:residues-of-gzT(ii)}
        \res[s=k]{\gzT[\tail](\ge,s)} 
        = \begin{cases}
          \ge^{d-k} \crv_k(\gen) \gzL(k), & k \neq d, \\
          \left(\crv_d(\gen) - \gl_d(\gen) \right) \gzL(d), &k=d.
        \end{cases}
      \end{align}
    \item[\emph{(iii)}] For $\pole = k \in \{0,1,\dots,d\}$, the residue $\res[s=k]{\gzT[\head](\ge,s)}$ is given by
      \linenopax
      \begin{align}\label{eqn:residues-of-gzT(iii)}
        \res[s=k]{\gzT[\head](\ge,s)} 
        = \ge^{d-k} \sum_{j=1}^{J(\ge)} \scale_j^k f_k(\scale_j^{-1} \ge),
      \end{align}
      with $J(\ge)$ as in \eqref{eqn:J(eps)} and $f_k$ as in \eqref{eqn:def:f_k}.
  \end{enumerate}
  \begin{proof}
    In light of the factorization formula \eqref{eqn:gzT[tail]-factored}, 
    \eqref{eqn:residues-of-gzT(i)} follows from \eqref{eqn:gzG[tail]} and the fact that, under the assumption of (i),
    \linenopax
    \begin{align*}
      \res{\gzT(\ge,s)} &= \res{\gzT[\tail](\ge,s)},    
      \qq \text{for } \pole \in \DL \less \{0,1,\dots,d\},
    \end{align*}
    while \eqref{eqn:residues-of-gzT(ii)} follows from \eqref{eqn:gzG[tail]-residue-at-k}--\eqref{eqn:gzG[tail]-residue-at-d} of Lemma~\ref{thm:gzG-residues}. Note that \pole is a simple pole of \gzL in part (i), and hence it is at most a simple pole of $\gzT[\tail](\ge,s)$; whence
    \linenopax
    \begin{align}\label{eqn:gzTtail-res-at-w}
      \res{\gzT[\tail](\ge,s)} 
      = \lim_{s \to \pole} (s-\pole) \gzT[\tail](\ge,s)
      = \gzG[\tail](\ge,\pole) \res{\gzL(s)},
    \end{align}
    from which \eqref{eqn:residues-of-gzT(i)} follows in light of \eqref{eqn:gzG[tail]}.
    Since $\pole = k \in \{0,1,\dots,d\}$ is a simple pole of $\gzT[\head]$, 
    \linenopax
    \begin{align*}
      \res[s=k]{\gzT[\head](\ge,s)} = \lim_{s \to k} \, (s-k) \gzT[\head](\ge,s),
    \end{align*}
    and hence \eqref{eqn:residues-of-gzT(iii)} follows immediately from \eqref{eqn:gzT[head]-notfactored}.
    Finally, as was already observed, the poles of $\gzT[\head]$ and $\gzT[\tail]$ belong to $\{0,1,\dots,d\}$ and $\sD_\tiling$, respectively.
  \end{proof}
\end{lemma}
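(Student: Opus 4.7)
\medskip

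\textbf{Plan.} My approach will rely on three ingredients already in hand: the head-tail decomposition $\gzT = \gzT[\head] + \gzT[\tail]$ from Theorem~\ref{thm:Meromorphic-continuation-of-gzT}; the factorization $\gzT[\tail](\ge,s) = \gzG[\tail](\ge,s)\,\gzL(s)$ that results from comparing \eqref{eqn:volume-zeta-split2} with \eqref{eqn:gzG[tail]}--\eqref{eqn:M_s(G)}; and the explicit residues of $\gzG[\head]$ and $\gzG[\tail]$ recorded in Lemma~\ref{thm:gzG-residues}. With these in place each of the three parts is essentially a one-line application of the product rule for residues, provided one keeps careful track of which factor is regular at the point in question.

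First I would dispose of part (iii). For fixed $\ge>0$, the function $\gzT[\head](\ge,\mydot)$ is a \emph{finite} sum (because $J(\ge) < \iy$) of functions of the shape $s\mapsto \ge^{d-s}\genir^{s-k}\scale_j^s f_k(\scale_j^{-1}\ge)/(s-k)$ multiplied by factors entire in $s$, so its only singularities are simple poles lying in $\{0,1,\dots,d\}$. Computing $\lim_{s\to k}(s-k)\gzT[\head](\ge,s)$ directly from \eqref{eqn:gzT[head]-notfactored} kills every term with index $\neq k$ in the outer sum and yields \eqref{eqn:residues-of-gzT(iii)} at once.

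For part (i), the hypothesis $\pole\notin\{0,1,\dots,d\}$ means that $\gzT[\head](\ge,\mydot)$ is holomorphic at $\pole$, so \eqref{eqn:res(gzt)=res(gzTtail)} gives $\res{\gzT(\ge,s)} = \res{\gzT[\tail](\ge,s)}$. Since the same exclusion implies that $\gzG[\tail](\ge,\mydot)$ is also holomorphic at $\pole$ (its only possible poles being in $\{0,1,\dots,d\}$ by \eqref{eqn:gzG[tail]}), and since $\pole$ is by assumption a simple pole of $\gzL$, the factorization \eqref{eqn:gzT[tail]-factored} reduces the calculation to
\[
\res{\gzT[\tail](\ge,s)} = \gzG[\tail](\ge,\pole)\,\res{\gzL(s)},
\]
and substituting \eqref{eqn:gzG[tail]} and \eqref{eqn:M_s(G)} for $\gzG[\tail](\ge,\pole)$ recovers \eqref{eqn:residues-of-gzT(i)}. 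For part (ii), the roles are reversed: the hypothesis $k\notin\sD_\sL$ makes $\gzL$ holomorphic at $k$, so $\res[s=k]{\gzT[\tail](\ge,s)} = \res[s=k]{\gzG[\tail](\ge,s)}\cdot \gzL(k)$, and the two cases $k<d$ and $k=d$ follow immediately by quoting \eqref{eqn:gzG[tail]-residue-at-k} and \eqref{eqn:gzG[tail]-residue-at-d} from Lemma~\ref{thm:gzG-residues}.

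There is no real obstacle here; the argument is essentially bookkeeping, and the two subtle points are purely housekeeping ones: verifying in part (i) that $\pole\notin\{0,1,\dots,d\}$ really does ensure $\gzG[\tail](\ge,\pole)$ is a finite value (so no hidden pole–zero cancellation occurs), and verifying in part (ii) that the exclusion $k\notin\sD_\sL$ is exactly what is needed to transfer the simple pole of $\gzG[\tail]$ at $s=k$ directly into a simple pole of the product $\gzG[\tail]\gzL$ with residue $\res[s=k]{\gzG[\tail](\ge,s)}\cdot\gzL(k)$. All of the genuine analytic content has already been isolated in Theorem~\ref{thm:Meromorphic-continuation-of-gzT} and Lemma~\ref{thm:gzG-residues}; the present lemma is the clean consequence of assembling those pieces through the factorization \eqref{eqn:gzT[tail]-factored}.
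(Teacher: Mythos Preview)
Your proposal is correct and follows essentially the same route as the paper: both arguments use the head--tail decomposition, the factorization $\gzT[\tail](\ge,s)=\gzG[\tail](\ge,s)\gzL(s)$, Lemma~\ref{thm:gzG-residues} for part~(ii), and a direct $\lim_{s\to k}(s-k)\gzT[\head](\ge,s)$ computation from \eqref{eqn:gzT[head]-notfactored} for part~(iii). The only cosmetic difference is ordering---the paper treats (i), (ii), (iii) in sequence rather than (iii) first---but the logical content is identical.
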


\begin{remark} \label{rem:residues-of-gzT} 
Note that Lemma~\ref{thm:residues-of-gzT} is valid for an arbitrary  fractal spray satisfying the hypotheses of the first part of Theorem~\ref{thm:pointwise-tube-formula}, but without the assumption that $S(0)<0$ (which is not necessary for Lemma~\ref{thm:residues-of-gzT} to hold).
\end{remark}

\subsubsection{The proof of Corollary~\ref{thm:ptwise-result-self-similar-case-simplified}}
  \label{sec:proof-of-some-corollary}
  Let \tiling be a self-similar tiling satisfying the hypotheses of Corollary~\ref{thm:ptwise-result-self-similar-case-simplified}. 
  Note that since the poles of \gzT are assumed to be simple, it follows that $\sD_\sL$ and $\{0,1,\dots,d\}$ are disjoint; that is, all poles of \gzL are simple and $D \notin \{1,\dots,d-1\}$. 
Recall that since \tiling is a self-similar tiling, we have $0 < \abscissa < d$ and \abscissa is the only pole of \gzL on the real axis. See footnote 6. 
The following proof makes use of the decomposition $\gzT = \gzT[\head] + \gzT[\tail]$ from \eqref{eqn:volume-zeta-split2}.
    Since $\sD_\tiling = \sD_\sL \cup \{0,1,\dots,d\}$ is a disjoint union, the present hypotheses and Theorem~\ref{thm:ptwise-result-self-similar-case} yield
    (for $\ge\in(0,g)$ and with $e_k(\ge)$ defined as in \eqref{eqn:self-similar-pointwise-tube-formula-coefficients-ek}, for $k=0,1,\dots,d$)
    \linenopax
    \begin{align}      
    V(\tiling,\ge)
      &= \sum_{\pole \in \sD_\sL} \res[\pole]{\gzT[\tail](\ge,s)}
       + \sum_{k \in \{0,1,\dots,d\}} \res[k]{\gzT[\tail](\ge,s)} 
       \notag \\
       &\hstr[15]+ \sum_{k \in \{0,1,\dots,d\}} \res[k]{\gzT[\head](\ge,s)} + \gl_d(\gen) \gzL(d) 
       \label{eqn:fractal-tube-formula-derivation1} \\
      &= \sum_{\pole \in \sD_\sL} \frac{\ge^{d-\pole}}{d-\pole} M_\pole(\gen) \res[\pole]{\gzL(s)}
       + \sum_{k=0}^{d-1} \ge^{d-k} \crv_k(\gen) \gzL(k) 
       + \left(\crv_d(\gen) - \gl_d(\gen)\right)\gzL(d) \notag \\
       &\hstr[15]+ \sum_{k=0}^d \ge^{d-k} e_k(\ge) + \gl_d(\gen)\gzL(d),
       \label{eqn:fractal-tube-formula-derivation2}
    \end{align}
    from which \eqref{eqn:self-similar-pointwise-tube-formula-simplified} follows. 
    In \eqref{eqn:fractal-tube-formula-derivation2}, we have set 
    \linenopax
    \begin{align*}
      M_\pole(\gen) = \sum_{k=0}^{d-1} \frac{\genir^{\pole-k}}{\pole-k}(d-k) \crv_k(\gen)
    \end{align*}
    as in \eqref{eqn:M_s(G)}, so that 
    $c_\pole = M_\pole(\gen) \res{\gzL(s)}/(d-\pole)$ for $\pole \in \sD_\sL$, and applied Lemma~\ref{thm:residues-of-gzT} to obtain the precise values of the residues of $\gzT[\head]$ and $\gzT[\tail]$; 
    see \eqref{eqn:residues-of-gzT(i)}--\eqref{eqn:residues-of-gzT(iii)}. 
    In particular, this verifies \eqref{eqn:self-similar-pointwise-tube-formula-coefficients-cw}.
    Note also that the residue of $\gzT[\tail](\ge,s)$ at $s=d$ and the term $\gl_d(\gen) \gzL(d)$ have been combined to yield $\crv_d(\gen) \gzL(d) = c_d$. This verifies \eqref{eqn:self-similar-pointwise-tube-formula-coefficients-ck}.
    Note that the expression of $c_k$ given in \eqref{eqn:self-similar-pointwise-tube-formula-coefficients-ck} for $k \in \{0,1,\dots,d-1\}$ follows from the second sum in \eqref{eqn:fractal-tube-formula-derivation2}.

\section{Concluding remarks and future directions}
\label{sec:conclusion}

\subsection{Relation to previous results.}
\label{sec:consistency} 
We will now discuss in more detail the consistency of our tube formula with the tube formulas for fractal sprays and strings previously obtained; see also Remark~\ref{rem:exact-formula-generalized}. 

\subsubsection{Comparison of the present pointwise results with the distributional results of \cite{TFCD}}  
Recall that in \cite{TFCD}, the tube formula obtained is only shown to hold distributionally, and only for fractals sprays with monophase generators. (For a discussion of how Theorem~\ref{thm:pointwise-tube-formula} extends results of \cite{TFCD} to generators which may not be monophase (or even pluriphase), see Remark~\ref{rem:monophase-and-pluriphase}.) 

For monophase generators $G$, the tubular zeta function $\gzT$ in Definition~\ref{def:volume-zeta-fn} simplifies to the zeta function appearing in \cite[Definition~7.1]{TFCD}, and consequently Corollary~\ref{thm:fractal-spray-tube-formula-monophase}, the monophase case of Theorem~\ref{thm:pointwise-tube-formula}, is precisely the pointwise analogue of \cite[Theorem~7.4]{TFCD}. We leave this as an exercise to the reader, with the following hints: 
\begin{enumerate}
  \item Note that the constant $\gk_d(G)$ has a different meaning in \cite[Eq.~(5.9)]{TFCD}, namely $\gk_d(G) = -\gl_d(G)$. In this paper, we have $\gk_d(G) = 0$ in the monophase case (cf. Remark~\ref{rem:monophase-and-pluriphase}) and $\gl_d(G)$ is kept as $\gl_d(G)$ in the formulas. 
  \item When one computes the residue of $\gzT(\ge,s)$ at $s=d$ in the version of \cite{TFCD}, a term appears which cancels the term $\gl_d(\gen)\gzL(d)$ in \eqref{eqn:pointwise-tube-formula}. 
\end{enumerate}
Note that for the earlier distributional results, the assumptions on the underlying fractal string $\sL$ are slightly weaker: the order of languidity is arbitrary, and fractal strings with $D=d$ are permitted for fractal sprays in $\bR^d$. The additional assumption $\abscissa < d$ on the abscissa of convergence of \gzL in Theorem~\ref{thm:pointwise-tube-formula} is necessary for the proof to hold and is similar to the assumption $\abscissa < 1$ in \cite[Theorem~8.7]{FGCD}. 
  Note that one always has $\abscissa \leq d$ for a fractal spray with finite total volume, as the latter is given by $\gzL(d)\gl_d(G)$.  
  Although it is easy to construct a fractal spray with $\abscissa = d$, it follows from Proposition~\ref{cor:OSC-dimension-d-implies-trivial} that a self-similar tiling cannot satisfy $\abscissa = d$; indeed, this would violate the nontriviality condition.

\subsubsection{Comparison with the 1-dimensional case}
To see that the pointwise tube formula for fractal strings in \cite[Theorem~8.7]{FGCD} is a special case of our tube formula for fractal sprays in Theorem~\ref{thm:pointwise-tube-formula}, let $\sT$ be a fractal spray in $\bR$, i.e., a geometric fractal string. Then the generator \gen is always a bounded open interval of length $2g$ ($g$ being the inradius of \gen) and with a (monophase) Steiner-like representation $V(G_{-\ge})=2\ge$, for $0<\ge\le g$, implying $\gk_0(G)=2$ and $\gk_1(G)=0$. The fractal string $\sL=\{\ell_1,\ell_2,\ldots\}$ of the scaling ratios generating $\sT$ corresponds to the fractal string $\tilde\sL=\{\tilde\ell_1,\tilde\ell_2,\ldots\}$ of the \emph{lengths} $\tilde\ell_j:=2g\ell_j$ of the intervals used in \cite{FGCD}, whence $\gz_{\tilde\sL}(s)=(2g)^s\gzL(s)$. 
Since we are in the monophase case, the tubular zeta function \gzT 
simplifies to 
\linenopax
\begin{align}
\gzT(\ge,s)
    &= 
      \ge^{1-s} \gzL(s) \left(
      \frac{2\genir^{s}}{s}- \frac{2\genir^{s}}{s-1}\right)
     = \gz_{\tilde\sL}(s) \frac{(2\ge)^{1-s}}{s(1-s)}\,,\notag 
\end{align}
which is precisely the function appearing in 
\cite[Thm 8.7]{FGCD}. Moreover, the complex dimensions at which the residues are taken also coincide, except for the two integer dimensions $0$ and $1$. However, the residue at $1$ cancels for the same reasons as in hint (ii) above, and one can show that the residue at $0$ 
appears in the tube formula \eqref{eqn:pointwise-tube-formula} if and only if $0\in W\setminus\sD_{\sL}(W)=W\setminus\sD_{\tilde\sL}(W)$, just as in \cite[Thm~8.7]{FGCD}.
Finally, we remark that, in the setting of geometric fractal strings, the hypotheses of Corollary~\ref{thm:fractal-spray-tube-formula-monophase}  are exactly the same as in \cite[Thm~8.7]{FGCD}.

\subsection{Origin of the terms in the tube formula} 
\label{rem:proof-of-fractal-tube-formula}
  The proof of Corollary~\ref{thm:ptwise-result-self-similar-case-simplified} (given in Section~\ref{sec:proof-of-some-corollary}) explains the origin of each term in the exact tube formula \eqref{eqn:self-similar-pointwise-tube-formula-simplified}. Indeed, in \eqref{eqn:fractal-tube-formula-derivation1}--\eqref{eqn:fractal-tube-formula-derivation2}, the first and second sum express the contribution of the tail zeta function $\gzT[\tail](\ge,\mydot)$ at the scaling and integer dimensions of \tiling, respectively, while the third sum 
  expresses the contribution of the residues of the head zeta function $\gzT[\head](\ge,\mydot)$ at the integer dimensions.

\subsection{The monophase case} \label{rem:monophase-gzG}
  Note that if \gen is monophase, its coefficient functions $\crv_k(\gen,\ge)$ are constant (and equal to $\crv_k(\gen)$). Consequently, the functions $f_k$ in \eqref{eqn:def:f_k} vanish identically, and hence so does $\gzT[\head](\ge,s)$ in \eqref{eqn:gzT[head]-notfactored}. As a result, one has $\gzT = \gzT[\tail]$, which is the case treated in \cite{TFCD}. This is so, in particular, when $d = 1$ and \gen is a bounded interval (i.e., in the case of a fractal string). 
  As a result, the contributions of the residues of the head tubular zeta function $\gzT[\head]$ vanish identically and thus do not have to be taken into account. 
  Note that in the monophase case, one must also have $\lim_{\ge\to 0^+} \crv_d(\gen,\ge) = 0$, and hence $\crv_d(\gen,\ge) = 0$ for all $0 < \ge \leq \genir$; see also the discussion of the monophase and pluriphase case in Remark~\ref{rem:monophase-and-pluriphase}. Consequently, this explains why the \nth[d] term drops out of the fractal tube formula appearing in \cite{TFCD}.

\subsection{The general case} \label{rem:0-in-W-redux}
  In Remark~\ref{rem:0-in-W}, it was observed that it is extremely useful to be able to drop the assumption that $S(0)<0$, especially for investigating delicate questions concerning the Minkowski measurability of fractal sprays and self-similar tilings. Indeed, by analogy with \cite[Section~8.4]{FGCD}, the proof of such a result requires screens lying arbitrarily close to the line $\Re s = \abscissa$.
  If in the languid case of Theorem~\ref{thm:pointwise-tube-formula} one drops the requirement that $S(0)<0$, then the tube formula in \eqref{eqn:pointwise-tube-formula} still holds, provided the error term $\R(\ge)$ given in \eqref{eqn:pointwise-error} (or equivalently, given by $\R_{\ta}(\ge)$ in \eqref{eqn:pointwise-error-head-tail}) is replaced by
  \linenopax
  \begin{align}\label{eqn:error-correction-for-S(0)>0}
    \R(\ge) =\R_{\ta}(\ge) + \R_{\he}(\ge)\,,
  \end{align}
  with $\R_{\he}(\ge)$ as in \eqref{eqn:pointwise-error-head}.
However, while the estimate $O(\ge^{d-\sup S})$ as $\ge \to 0^+$ remains true for $\R_{\ta}(\ge)$, it will not be satisfied in general for $\R_{\he}(\ge)$, the sum of the residues of $\gzT[\head]$ over the hidden integer dimensions. Hence, such a tube formula would be rather useless, as its error term may be of the same order as (or even dominate) its `main term'. 
As a result, this generalization of Theorem~\ref{thm:pointwise-tube-formula} would not be suitable for investigating the Minkowski measurability of fractal sprays or even of self-similar tilings.

In fact, the assumption $S(0)<0$ should be seen as the price one has to pay for the generality of the allowed Steiner-like representations.
Stronger hypotheses on the generator \gen (or on the coefficients in the Steiner-like representation) will lead to better estimates of the error term and thus allow one to drop this assumption, as in the monophase case, and to extend the results on Minkowski measurability mentioned in Remark~\ref{rem:0-in-W} (and discussed in detail in \cite{MMappendix}) beyond the monophase setting. We plan to address this issue in \cite{MMFS}.

\subsection{Piecewise analytic Steiner-like representations}
  \label{rem:Piecewise-analytic-SLreps}
   In Example~\ref{exm:Cantor Carpet Tiling}, there is a partition of the interval $(0,\genir]$ into finitely many pieces, namely
   $  (0,\genir] = \left(0,\genir/\sqrt2\right]
     \cup \left(\genir/\sqrt2,\genir\right]$,
   such that each coefficient function $\crv_k(\gen,\ge)$ is analytic on the interior of each subinterval. That is, each $\crv_k(\gen,\ge)$ is continuous and given by an absolutely convergent power series in \ge (in the first subinterval) or $\frac1\ge$ (in the remaining subintervals).
   In such a case, we say that \gen has a \emph{piecewise analytic Steiner-like representation}. 
   
   This condition appears to be satisfied by many natural examples of fractal sprays (and self-similar tilings in particular). Indeed, it may be the key assumption needed to be able to apply our tube formulas efficiently to a wide variety of examples. In some future work, we plan to investigate this property further, especially with regard to associated Minkowski measurability results; see \cite{MMFS} and Section~\ref{rem:0-in-W-redux} just above.

\subsection{Fractal curvatures} \label{rem:search-for-fractal-curvatures}
  As was mentioned at the end of Section~\ref{sec:motivation}, a key motivation for the present work is the search for a good notion of fractal curvature. (See \cite[Section~8.2 and Section~12.7]{FGCD} for a discussion in the 1-dimensional case.) In our context, this would entail obtaining a local tube formula (with or without error term) corresponding to Theorem~\ref{thm:pointwise-tube-formula} (and its corollaries). This would lead naturally to an interpretation of the coefficients of such a local tube formula in terms of ``curvature measures'' (or rather, distributions) associated with each complex dimension (that is, with each scaling and integer dimension). We hope to explore such a possibility in future work 
  and to establish in the process some useful connections with \cite{Steffen:thesis} and some of the references (on geometric measure theory and differential geometry) discussed in Section~\ref{sec:motivation}; see \cite{Federer, Schneider, HuLaWe} and \cite{Weyl, BergerGostiaux,Gray}, in particular. Furthermore, we expect that eventually the present work and its ramifications will be helpful in obtaining global and local tube formulas (and an appropriate notion of fractal curvature), for more general fractal objects than fractal sprays and self-similar tilings.

\pgap

\bibliographystyle{math}
\bibliography{strings}


\end{document}